%

\documentclass[aap,preprint]{imsart}

\usepackage{amsthm,amsmath}
\usepackage[round]{natbib}
\usepackage{hyperref}
\usepackage{verbatim}
\usepackage{amssymb}
\usepackage{mathrsfs} 
\usepackage{bbm} 


\startlocaldefs

\newcounter{subsection1}[section]
\setcounter{subsection1}{1}

\newtheorem{defn}[subsection1]{Definition}
\newtheorem{lemma}[subsection1]{Lemma}
\newtheorem{prop}[subsection1]{Proposition}
\newtheorem{theorem}[subsection1]{Theorem}
\newtheorem{remark}[subsection1]{Remark}

\numberwithin{equation}{section} 
\numberwithin{subsection1}{section}


\def\to{\rightarrow} 

\def\mc{\mathcal} 
\def\mb{\mathbb} 
\def\v{\mathbf} 
\def\E{\mb{E}} 
\def\P{\mb{P}}
\def\R{\mb{R}} 
\def\N{\mb{N}}

\def\Z{\mb{Z}}

\def\~{\sim}
\def\-{\,:\,} 

\def\|{\,|\,} 

\def\1{\mathbbm{1}}

\def\slfv{S$\Lambda$FV}
\def\slfvs{S$\Lambda$FVS}
\def\l{\left}
\def\r{\right}

\def\IH1{\textit{($IH$)$_1$}}

\newcommand \ind {\mathbf{1}}

\endlocaldefs

\begin{document}

\allowdisplaybreaks

\begin{frontmatter}

\title{Branching Brownian motion and Selection in the Spatial $\Lambda$-Fleming-Viot Process}
\runtitle{BBM and Selection in the {\slfv} Process}


\begin{aug}

\author{\fnms{Alison} \snm{Etheridge}\ead[label=e1]{etheridg@stats.ox.ac.uk}\thanksref{t1},}
\author{\fnms{Nic} \snm{Freeman}\corref{}\ead[label=e2]{n.p.freeman@sheffield.ac.uk},}
\author{\fnms{Sarah} \snm{Penington}\ead[label=e3]{sarah.penington@sjc.ox.ac.uk}\thanksref{t2}}
\and
\author{\fnms{Daniel} \snm{Straulino}\ead[label=e4]{danielstraulino@gmail.com}\thanksref{t3}}

\affiliation{University of Oxford, University of Sheffield and Heilbronn Institute for Mathematical Research}

\thankstext{t1}{supported in part by EPSRC Grant EP/I01361X/1}
\thankstext{t2}{supported by EPSRC DTG EP/K503113/1}
\thankstext{t3}{supported by CONACYT}

\address{Alison Etheridge \\ Department of Statistics \\ University of Oxford \\  24-29 St Giles \\ Oxford \\ England \\ \printead{e1}}
\address{Nic Freeman \\ School of Mathematics and Statistics \\ University of Sheffield \\ Hounsfield Road \\ Sheffield \\ England \\\printead{e2}}
\address{Sarah Penington \\ Department of Statistics \\ University of Oxford \\ 24-29 St Giles \\ Oxford \\ England \\ \printead{e3}}
\address{Daniel Straulino \\ Department of Statistics \\ University of Oxford \\ 24-29 St Giles \\ Oxford \\ England \\ \printead{e4}}

\end{aug}

\runauthor{A.~Etheridge, N.~Freeman, S.~Penington, D.~Straulino}

\begin{abstract}

We ask the question ``when will natural selection on a gene in a spatially structured population
cause a detectable trace in the patterns of genetic variation observed in the contemporary 
population?''. We focus on the situation in which `neighbourhood size', that is the effective 
local population density, is small.
The genealogy relating individuals in a sample from the population is embedded in a spatial
version of the ancestral selection graph and through applying a diffusive scaling to this object we
show that whereas in dimensions at least three, selection is barely impeded by the spatial
structure, in the most relevant dimension, $d=2$, selection must be stronger (by a factor of
$\log(1/\mu)$ where $\mu$ is the neutral mutation rate) if we are to have a chance of detecting it.
The case $d=1$ was handled in \cite{EFS2015}.

The mathematical interest is that although the system of branching and coalescing lineages that
forms the ancestral selection graph converges to a branching Brownian motion, this reflects a 
delicate balance of a branching rate that grows to infinity and the instant annullation of almost
all branches through coalescence caused by the strong local competition in the population.

\end{abstract}

\begin{keyword}[class=MSC]
\kwd[Primary ]{60G99}
\kwd{}
\kwd[Secondary ]{92B05}
\end{keyword}

\begin{keyword}
\kwd{Spatial Lambda-Fleming-Viot Process}
\kwd{branching}
\kwd{coalescing} 
\kwd{natural selection}
\kwd{branching Brownian motion}
\kwd{population genetics}
\end{keyword}

\end{frontmatter}

\section{Introduction}\label{intro}

Our aims in this work are two-fold. 
On the one hand, we address a question of interest in population genetics: 
when will the action of natural selection on a gene in a spatially structured population 
cause a detectable trace in the patterns of genetic variation observed in the contemporary population?
On the other hand, we investigate some of the rich structure underlying mathematical models 
for spatially evolving populations and, in particular, the systems of interacting random walks that, 
as dual processes (corresponding to ancestral lineages of the model), describe the genetic relationships
between individuals sampled from those populations. 

Since the seminal work of \cite{fisher:1937}, a large literature has developed that investigates
the interaction of natural selection with the spatial structure of a population. 
Traditionally, the deterministic action of migration and selection is approximated
by what we now call the Fisher-KPP equation and predictions from that equation are compared to data.
However, many important questions depend on how selection and migration interact with a
third force, the stochastic fluctuations known as random genetic drift, and this poses significant 
new mathematical challenges. 

For the most part, random drift is modelled through Wright-Fisher noise 
resulting in a stochastic PDE as a model for the evolution of gene frequencies $w$:
$$\frac{\partial w}{\partial t}
=m\Delta w- s w (1-w)+\sqrt{\gamma w(1-w)}\dot{\mathcal W}$$
(for suitable constants $m$, $s$ and $\gamma$),
where $\mathcal W$ is space-time white noise.
This stochastic Fisher-KPP equation has been extensively studied, 
see, for example, \cite{mueller/mytnik/quastel:2008} and references therein.  However, from a modelling perspective it has  
two immediate shortcomings. First, it only makes sense in one spatial dimension.  This is generally
overcome by artificially subdividing the population, and thus replacing the stochastic PDE by a system
of stochastic ordinary differential equations, coupled through migration. The second problem is that, 
in deriving the equation, one allows the `neighbourhood size' to tend to infinity.
We shall give a precise definition of neighbourhood size in Section~\ref{model}. Loosely, it is
inversely proportional to the probability that two individuals sampled from sufficiently close
to one another had a common parent in the previous generation and small neighbourhood size
corresponds to strong genetic drift.  
It is understanding
the implications of dropping this (usually implicit) assumption of unbounded neighbourhood size that motivated the work presented here. 

Our starting point will be the Spatial $\Lambda$-Fleming-Viot process with selection ({\slfvs}), which 
(along with its dual) was introduced and constructed in \cite{EVY2014}. The dynamics of both the {\slfvs} 
and its dual are driven by a Poisson Point Process of `events' (which model reproduction or
extinction and recolonisation in the population)
and will be described in detail in Section~\ref{model}. The advantage of this model is that it
circumvents the need to subdivide the population in higher dimensions. However, since our proof is 
based on an analysis of the branching and coalescing system of random walkers that describes the 
ancestry of a sample from the population, it would be straightforward to modify it to apply to, for
example, an individual based model in which a fixed number of individuals reside at each point of a 
$d$-dimensional lattice.

In classical models of population genetics, in which there is no spatial structure, we generally think of
population size as setting the timescale of evolution of frequencies of different genetic types. 
Evidently that makes no sense in our setting. However (even in the classical setting), as we explain in
more detail in Section~\ref{biology},
if natural selection is to leave a distinguishable trace in contemporary patterns of genetic variation, then
a sufficiency of neutral mutations must fall on the genealogical trees relating 
individuals in a sample. Thus, in fact, it is the neutral mutation rate which sets the timescale 
and, since mutation rates are very low, this leads us to consider
scaling limits. 

In \cite{EVY2014}, scaling limits of the (forwards in time) {\slfvs} were considered in which the 
neighbourhood size tends to infinity. In that case, the classical Fisher-KPP equation and, in one 
spatial dimension, its stochastic analogue are recovered. The dual process of branching and coalescing 
lineages converges to branching Brownian motion, with coalescence of lineages (in one dimension) at a rate determined by 
the local time that they spend together. 
In this article we consider scaling limits in the (very different) regime in which neighbourhood 
size remains finite. In this context the interaction between genetic drift and spatial structure becomes much more important and, 
in contrast to \cite{EVY2014}, it is the dual process which proves to be the 
more analytically tractable object.

We shall focus on the most biologically relevant case of two spatial dimensions. The case of
one dimension was discussed in \cite{EFS2015}. The main interest there is mathematical: the dual
process of branching and coalescing ancestral lineages, suitably scaled, converges to the Brownian net.
However, the scaling required to obtain a non-trivial limit reveals a strong effect of the spatial structure. 
Here we shall identify the corresponding scalings in dimensions $d\geq 2$.
Whereas in \cite{EVY2014}, the scaling of the selection coefficient
is independent of spatial dimension and, indeed, mirrors that for unstructured populations, for bounded 
neighbourhood size this is no longer the case. In $d=1$ and $d=2$ the scaling of the selection 
coefficient required to obtain a non-trivial limit reflects strong local competition.

Our main result, Theorem~\ref{result d>1}, is that under these (dimension-dependent) scalings,
the scaled dual process converges to a branching Brownian motion. For $d\geq 3$ this is 
rather straightforward, but in two dimensions things are much more delicate.
The mathematical interest of our result is that in $d=2$,
under our scaling, the rate of branching of ancestral lineages 
explodes to infinity but, crucially, all except finitely many branches are instantaneously annulled through coalescence.
That this finely balanced 
picture produces a non-degenerate limit results from a combination of the  
failure of two dimensional Brownian motion to hit points and the strong (local) interactions of the approximating random walks,
which cause coalescence. 

From a biological perspective, the main interest is that, in contrast to
the infinite neighbourhood size limit, here we see a strong effect of spatial dimension in our results. 
When neighbourhood size is very big, the probability of fixation for an advantageous genetic type,
i.e. the probability that the genetic type establishes and sweeps 
through the entire population, is not affected by spatial structure. When neighbourhood size is small, in (one and)
two spatial dimensions, selection has to be much stronger to leave a detectable trace  
than in a population with no spatial structure. 
Indeed, local establishment is no longer a guarantee of eventual fixation. 

The rest of the paper is laid out as follows. In Section~\ref{model} we describe the {\slfvs} and the dual
process of branching and coalescing random walks, state our
main result and provide a heuristic argument that explains our choice of scalings.
In Section~\ref{biology} we place our findings in the context of previous work on selective sweeps in
spatially structured populations and  
in Section~\ref{proof} we prove our result.

~

\noindent
{\bf Acknowledgements}

Our results (with different proofs) form part of the DPhil thesis of the last author. We would like to thank
the examiners, Christina Goldschmidt and Anton Wakolbinger, for their careful reading of the thesis and 
detailed feedback.
We would also like to thank the two anonymous referees for their careful reading of the paper and valuable comments.

\section{The model and main result}
\label{model}

\subsection{The model}

To motivate the definition of the {\slfvs}, it is convenient to recall (a very special case of)
the model without selection, 
introduced in \cite{E2008, BEV2010}.  We shall call it the {\slfv} to emphasize that 
selection is not acting. We proceed informally, only carefully specifying the state space and conditions
that are sufficient to guarantee existence of the process when we define the {\slfvs} itself in Definition~\ref{slfvdefn}.
The interested reader can find much more general conditions under which the {\slfv} exists in
\cite{EK2014}. 

We restrict ourselves to the case of just two genetic types,
which we denote $a$ and $A$, and we suppose that the population is evolving in $\R^d$. 
It is convenient to index time by the whole real line.
At each time $t$, the 
random function $\{w_t(x),\, x\in \R^d\}$ 
is defined, up to a Lebesgue null set of $\R^d$, by
\begin{equation}
\label{defn of w}
w_t(x):= \hbox{ proportion of type }a\hbox{ at spatial position }x\hbox{ at time }t.
\end{equation}
The dynamics are driven by a
Poisson point process $\Pi$ on $\R\times \R^d\times \R_+\times (0,1]$. Each point 
$(t,x,r,u)\in\Pi$ specifies a reproduction event which will affect that part of the population 
at time $t$ which lies within
the closed ball $\mc{B}_r(x)$ of radius $r$ centred on the point $x$. 
First the location $z$ of the parent of the event is chosen uniformly at random from $\mc{B}_r(x)$.
All offspring inherit the type $\alpha$ of the parent which is determined by 
$w_{t-}(z)$; that is,
with probability $w_{t-}(z)$ all offspring will be type $a$, otherwise they will be $A$. 
A portion $u$ of the population within the ball is then replaced by offspring so that
$$w_t(y)=(1-u)w_{t-}(y)+u \1_{\{\alpha=a\}},\qquad\forall y\in \mc{B}_r(x).$$
The population outside the ball is unaffected by the event.
We sometimes call $u$ the impact of the event. 

Under this model, the time reversal of the same Poisson Point 
Process of events governs the ancestry of a sample
from the population.  Each ancestral lineage that lies in the region affected by an event has a probability
$u$ of being among the offspring of the event, in which case, as we trace backwards in 
time, it jumps to the location of 
the parent, which is sampled uniformly from the region. In this way, ancestral lineages
evolve according to (dependent) compound Poisson processes and lineages can coalesce when affected by
the same event. All lineages affected by an event inherit the type of the parent of that event.

\begin{remark}
In \cite{EK2014}, the {\slfv} and its dual are constructed simultaneously on 
the same probability space, through a lookdown construction, as the limit of an 
individual based model, and so the dual process just described really can be interpreted as 
tracing the ancestry of individuals in a sample from the population. 
\end{remark} 

We are now in a position to define the neighbourhood size.
\begin{defn}
Write $\sigma^2$ for the variance of the first coordinate of the location of a single ancestral lineage after 
one unit of time and $\eta(x)$ for the instantaneous rate of coalescence of two lineages 
that are currently at a separation $x\in\R^d$. Then the {\em neighbourhood size}, ${\cal N}$ is
given by 
$${\cal N}=\frac{2dC_d\sigma^2}{\int_{\R^d}\eta(x)dx},$$
where $C_d$ is the volume of the unit ball in $\R^d$.
\end{defn}
Neighbourhood size is used in biology to quantify the local number of breeding individuals in a continuous population; see \cite{barton/etheridge/kelleher/veber:2013a} for a derivation of this formula.
If we assume that the impact is the same for all events, then 
the impact is inversely proportional to the 
neighbourhood size, see \cite{barton/etheridge/kelleher/veber:2013a}.

There are very many different ways in which to introduce selection into the {\slfv}. 
Our approach here is a simple adaptation of that
adopted in classical models of population genetics.
The parental type in the {\slfv} is a uniform pick 
from the types in the region affected by the event. We can introduce
a small advantage to individuals of type $A$ by choosing the parent in a weighted way. Thus if, 
immediately before reproduction, the
proportion of type $a$ individuals in the region affected by the event is $\overline{w}$, then 
the offspring will be type $a$ with probability $\overline{w}/(1+\v{s}(1-\overline{w}))$. 
We say that the relative
fitnesses of types $a$ and $A$ are $1$ and $1+\v{s}$ respectively and refer to $\v{s}$ as the selection
coefficient. We are interested only in small values of $\v{s}$ and so we expand
$$\frac{\overline{w}}{1+\v{s}(1-\overline{w})}=\overline{w} \{1-\v{s}(1-\overline{w})\}
+{\mathcal O}(\v{s}^2)
=(1-\v{s})\overline{w}+\v{s}\overline{w}^2 +{\mathcal O}(\v{s}^2).$$
We shall regard $\v{s}^2$ as being negligible. 
We can then think of each event, independently, as being a `neutral' event with
probability $(1-\v{s})$ and a `selective' event with probability $\v{s}$.
Reproduction during neutral events is exactly as before, but during selective events, we sample two
{\em potential} parents; only if both are type $a$ will the offspring be of type $a$. 

Let us now give a more precise definition of the {\slfvs}.  We retain the notation of~(\ref{defn of w}).
A construction of an appropriate state space for $x\mapsto w_t(x)$ can be found in \cite{veber/wakolbinger:2013}. 
Using the identification
$$
\int_{\R^d\times \{a,A\}} f(x,\kappa) M(dx,d\kappa) = \int_{\R^d} \big\{w(x)f(x,a)+ (1-w(x))f(x,A)\big\}\, dx,
$$
this state space is in one-to-one correspondence with the space
${\cal M}_\lambda$ of measures on $\R^d\times\{a,A\}$ with `spatial marginal' Lebesgue measure,
which we endow with the topology of vague convergence. By a slight abuse of notation, we also denote the
state space of the process $(w_t)_{t\in\R}$ by 
${\cal M}_\lambda$.

\begin{defn}[{\slfv} with selection ({\slfvs})]
\label{slfvdefn}
Fix $\mc{R}\in(0,\infty)$.  Let $\mu$ be a finite measure on $(0,\mc{R}]$ and, for each $r\in (0,\mc{R}]$,
let $\nu_r$ be a probability measure on $(0,1]$.
Further, let $\Pi$ be a Poisson point process on 
$\R\times \R^d\times (0,\mc{R}]\times (0,1]$ with intensity measure 
\begin{equation}\label{slfvdrive}
dt\otimes dx\otimes \mu(dr)\nu_r(du).
\end{equation}
The {\em spatial $\Lambda$-Fleming-Viot process with selection} ({\slfvs})
driven by \eqref{slfvdrive} is the ${\cal M}_\lambda$-valued process $(w_t)_{t\in\R}$ with dynamics given as follows.

If $(t,x,r,u)\in \Pi$, a reproduction event occurs at time $t$ within the closed ball $\mc{B}_r(x)$ of radius $r$ centred on $x$. With probability $1-\v{s}$ the event is {\em neutral}, in which case:
\begin{enumerate}
\item Choose a parental location $z$ uniformly at random within 
$\mc{B}_r(x)$, 
and a parental type, $\alpha$, according to $w_{t-}(z)$, that is
$\alpha=a$ with probability $w_{t-}(z)$ 
and $\alpha=A$ with probability $1-w_{t-}(z)$.
\item For every $y\in \mc{B}_r(x)$, set 
$w_t(y) = (1-u)w_{t-}(y) + u\ind_{\{\alpha=a\}}$.
\end{enumerate}
With the complementary probability $\v{s}$ the event is {\em selective},
in which case:
\begin{enumerate}
\item Choose two `potential' parental locations $z,z'$ independently and uniformly at random within $\mc{B}_r(x)$, and at each of these sites `potential' parental types 
$\alpha$, $\alpha'$, according to $w_{t-}(z), w_{t-}(z')$ respectively.
\item For every $y\in \mc{B}_r(x)$ set 
$w_t(y) = (1-u)w_{t-}(y) + u\ind_{\{\alpha =\alpha'=a\}}$.
Declare the parental location to be $z$ if $\alpha=\alpha'=a$ or
$\alpha=\alpha'=A$ and to be $z$ (resp. $z'$) if $\alpha=A,\alpha'=a$
(resp. $\alpha=a, \alpha'=A$).
\end{enumerate}
\end{defn}
This is a very special case of the {\slfvs} introduced in 
\cite{EVY2014}. 

We are especially concerned with the dual process of the {\slfvs}.
Whereas in the neutral case we can always identify the distribution of the 
location of the parent of each event, without any additional information on the distribution of types
in the region, 
now, at a selective event, we are unable to identify which of the `potential parents' is the
true parent of the event without knowing their types. These can only be established by
tracing further into the past.
The resolution is to follow all
{\em potential} ancestral lineages backwards in time. This results in a system of 
branching and coalescing walks.

As in the neutral case, the
dynamics of the dual are driven by the same Poisson point process of events, $\Pi$, 
that drove the forwards in 
time process. The distribution of this Poisson point process is invariant under time reversal and so we shall 
abuse notation by reversing the direction of time when discussing the dual.

We suppose that at time $0$ (which we think of as `the present'), we sample $k$ individuals from locations 
$x_1,\ldots ,x_k$ and we write $\xi_s^1,\ldots ,\xi_s^{N_s}$ for the locations of the $N_s$ 
{\em potential ancestors} that make up our  dual at time $s$ before the present. 

\begin{defn}[Branching and coalescing dual]
\label{dualprocessdefn}
The branching and coalescing dual process $(\Xi_t)_{t\geq 0}$ driven by $\Pi$ is the
$\bigcup_{m\geq 1}(\R^d)^m$-valued Markov process with dynamics defined as follows:  
at each event $(t,x,r,u)\in \Pi$, with probability $1-\v{s}$, the event is neutral:
\begin{enumerate}
\item For each $\xi_{t-}^i\in \mc{B}_r(x)$, independently mark 
the corresponding 
lineage with probability $u$;
\item if at least one lineage is marked, 
all marked lineages disappear and are replaced by a single
lineage, whose location at time $t$ is drawn uniformly at random 
from within $\mc{B}_r(x)$.
\end{enumerate}
With the complementary probability $\v{s}$, the event is selective:
\begin{enumerate}
\item For each $\xi_{t-}^i\in \mc{B}_r(x)$, independently mark the corresponding
lineage with probability $u$;
\item if at least one lineage is marked,
all marked lineages disappear and are replaced by {\em two} lineages, whose locations at time $t$ are drawn independently and uniformly from
within $\mc{B}_r(x)$.
\end{enumerate}
In both cases, if no lineages are marked, then nothing happens.
\end{defn}

Since we only consider finitely many initial individuals in the sample, 
and the jump rate of the dual is bounded by a linear function of the number of potential ancestors,
this description gives rise to a well-defined process. 

This dual process is the analogue for the {\slfvs} of the Ancestral Selection Graph (ASG), 
introduced in the companion papers \cite{krone/neuhauser:1997, neuhauser/krone:1997}, 
which describes all the potential ancestors 
of a sample from a population evolving according to the Wright-Fisher diffusion with selection. 
Perhaps the simplest way of expressing the duality between the {\slfvs} and the branching and
coalescing dual process is to observe that all the individuals in our sample are of type $a$
if and only if all potential ancestral lineages are of type $a$ at any time $t$ in the past. 
This is analogous to the {\em moment duality} between the ASG and the Wright-Fisher 
diffusion with selection.
However, to state this formally for the {\slfvs}, we would need to be able to identify 
$\E[\prod_{i=1}^n w_t(x_i)]$ for any choice of points $x_1,\ldots ,x_n\in\R^d$. The difficulty 
is that, just as in the neutral case, the {\slfvs} $w_t(x)$ is only defined at Lebesgue almost every point 
$x$ and so we have to be satisfied with a `weak' moment duality.
\begin{prop}\label{prop: dual}[\cite{EVY2014}]
The spatial $\Lambda$-Fleming-Viot process with selection is 
dual to the process $(\Xi_t)_{t\geq 0}$ in the sense that 
for every $k\in \N$ and $\psi\in C_c((\R^d)^k)$, we have
\begin{align}
\E_{w_0}\bigg[\int_{(\R^d)^k} & \psi(x_1,\ldots,x_k)\bigg\{\prod_{j=1}^k w_t(x_j)\bigg\}\, dx_1\ldots dx_k\bigg] \nonumber\\
& = \int_{(\R^d)^k} \psi(x_1,\ldots,x_k)\E_{\{x_1,\ldots,x_k\}}\bigg[\prod_{j=1}^{N_t} w_0\big( \xi_t^j\big)\bigg]\, dx_1 \ldots dx_k. \label{dual formula}
\end{align}
\end{prop}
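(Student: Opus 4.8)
The plan is to deduce \eqref{dual formula} from a \emph{moment duality} between the two processes, obtained by matching the actions of their generators on the functionals $w\mapsto\prod_jw(x_j)$ --- this is how the result is established in \cite{EVY2014}, and I indicate the main steps. Write $\mc{L}$ for the generator of the {\slfvs} of Definition~\ref{slfvdefn}, $\mc{G}$ for that of the branching and coalescing dual of Definition~\ref{dualprocessdefn}, and for a finite configuration $\v{x}=(x_1,\ldots,x_m)\in(\R^d)^m$ set $H(w,\v{x})=\prod_{j=1}^{m}w(x_j)$. Since $w$ takes values in $[0,1]$ we have $0\le H\le 1$; and since the dual is started from finitely many lineages and, by the observation following Definition~\ref{dualprocessdefn}, its jump rate is bounded by a linear function of the number of lineages, $N_t$ has finite expectation for each $t$. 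For $\psi\in C_c((\R^d)^k)$ put $F_\psi(w)=\int_{(\R^d)^k}\psi(\v{x})\,H(w,\v{x})\,d\v{x}$, a well-defined ``polynomial'' functional of $w\in\mc{M}_\lambda$.

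The heart of the matter is the one-event computation. Fix an event $(t,x,r,u)\in\Pi$, let $S=\{j:x_j\in\mc{B}_r(x)\}$, and write $\overline{w}$ for the spatial average $\frac{1}{C_dr^d}\int_{\mc{B}_r(x)}w(z)\,dz$. On the forward side a neutral event replaces $w(x_j)$ by $(1-u)w(x_j)+u\ind_{\{\alpha=a\}}$ for $j\in S$ (with the same parental type $\alpha$ for all such $j$) and leaves the remaining coordinates unchanged; using that $\ind_{\{\alpha=a\}}^{p}=\ind_{\{\alpha=a\}}$ for $p\ge1$, taking the expectation over the parental type (which contributes $w(z)$) and then over $z$ uniform in $\mc{B}_r(x)$,
\begin{align*}
\E\Big[\prod_{j\in S}\big((1-u)w(x_j)+u\ind_{\{\alpha=a\}}\big)\Big]&=(1-u)^{|S|}\prod_{j\in S}w(x_j)\\
&\quad+\sum_{\emptyset\neq M\sw S}(1-u)^{|S\sc M|}u^{|M|}\Big(\prod_{j\in S\sc M}w(x_j)\Big)\overline{w}.
\end{align*}
On the dual side a neutral step marks each lineage in $S$ independently with probability $u$ and, if the marked set $M$ is non-empty, merges those lineages into a single one at a location uniform in $\mc{B}_r(x)$; the expectation of $\prod_{j\in S}w(\xi^j)$ over the marking and the new location is \emph{exactly} the right-hand side above, with $\xi^j\leftrightarrow x_j$. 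The selective case is identical except that on the forward side a non-empty $M$ contributes the factor $\ind_{\{\alpha=\alpha'=a\}}$, whose expectation over two independent potential parents is $\overline{w}^2$, while on the dual side the marked lineages are replaced by two lineages at independent uniform locations, again contributing $\overline{w}^2$. Integrating against the event intensity $dx\otimes\mu(dr)\nu_r(du)$, weighting the neutral and selective contributions by $1-\v{s}$ and $\v{s}$, and then integrating against $\psi$, one obtains the generator identity
$$\big(\mc{L}F_\psi\big)(w)=\int_{(\R^d)^k}\psi(\v{x})\,\big(\mc{G}\,H(w,\cdot)\big)(\v{x})\,d\v{x},\qquad w\in\mc{M}_\lambda.$$

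Given this identity, \eqref{dual formula} follows by the standard Dynkin/semigroup argument: running the {\slfvs} $(w_s)$ from $w_0$ and, independently, the dual $(\Xi_s)$ from $\{x_1,\ldots,x_k\}$, one shows that
$$s\longmapsto\int_{(\R^d)^k}\psi(x_1,\ldots,x_k)\,\E_{w_0,\{x_1,\ldots,x_k\}}\!\Big[\,\prod_{j=1}^{N_{t-s}}w_s\big(\xi^j_{t-s}\big)\Big]\,dx_1\cdots dx_k$$
is constant on $[0,t]$ --- applying Dynkin's formula to each process and using the generator identity, the two contributions to the derivative cancel, and the required integrability is supplied by $0\le H\le 1$ together with finiteness of $\E[N_{t-s}]$ --- and then evaluating at $s=t$ and $s=0$ yields, respectively, the left- and right-hand sides of \eqref{dual formula}. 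The one genuinely delicate point --- and the reason the duality must be stated in this integrated, ``weak'' form --- is that $w_0$, hence $\prod_{j=1}^{N_t}w_0(\xi^j_t)$, is only defined up to a Lebesgue-null set; this is resolved by observing that on $\{N_t=m\}$ the joint law of $(\xi^1_t,\ldots,\xi^m_t)$ under $dx_1\cdots dx_k\otimes\P_{\{x_1,\ldots,x_k\}}$ is absolutely continuous with respect to Lebesgue measure on $(\R^d)^m$ (between the last event affecting a given lineage and time $t$, its position is either its Lebesgue-distributed starting point or a uniformly chosen point in a ball), so the right-hand side of \eqref{dual formula} is unchanged when $w_0$ is modified on a null set. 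I expect the main obstacle to lie not in the (purely combinatorial) generator matching but in this measure-theoretic bookkeeping, together with the verification that $F_\psi$ genuinely belongs to the domains of $\mc{L}$ and $\mc{G}$ and the attendant interchanges of expectation and integration; the full details are carried out in \cite{EVY2014}.
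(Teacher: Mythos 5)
The paper does not prove this proposition at all: it is imported verbatim from \cite{EVY2014}, so there is no internal proof to compare against. Your sketch --- the event-by-event matching of the two generators on the polynomial functionals $w\mapsto\int\psi\prod_j w(x_j)$, the Dynkin/interpolation argument making $s\mapsto\E[\,\cdot\,]$ constant on $[0,t]$, and the absolute-continuity observation explaining why the duality must be stated in integrated (weak) form --- is the standard moment-duality route and is essentially how the cited reference establishes the result; the one-event computations you display (neutral contribution $\overline{w}$, selective contribution $\overline{w}^2$ on both sides) are correct, and the remaining work is exactly the domain/integrability and measure-theoretic bookkeeping you identify and, like the paper, defer to \cite{EVY2014}.
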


\subsection{The main result}
\label{results}

Our main result concerns a diffusive rescaling of the dual process of Definition~\ref{dualprocessdefn}
and so from now on it will be convenient if

\centerline{\em forwards in time refers to forwards for the dual process.}

We shall take the impact parameter, $u$, to be a fixed number in $(0,1]$ (i.e.~$\nu_r=\delta_u$ for all $r$). 
In fact, the same arguments work when $u$ is allowed to be random, as long as $\int_{\mc R'}^{\mc R}\int_0^1 u \nu_r (du)\mu(dr)>0$ for some $0<\mc R'< \mc R$, but this would make our proofs notationally cumbersome.

Let us describe the scaling more precisely. Suppose that $\mu$ is a finite measure on $(0,\mc{R}]$.
We shall assume for convenience that $\mc R$ is defined in such a way that for any $\delta>0$, $\mu((\mc R -\delta , \mc R])>0$.
For each $n\in\N$, define the measure $\mu^n$ by
$\mu^n(B)=\mu(n^{1/2}B),$
for all Borel subsets $B$ of $\R_{+}$. It will be convenient to write $\mc{R}_n=\mc{R}/\sqrt{n}$.
At the $n$th stage of the rescaling, our rescaled dual is driven by
the Poisson point process $\Pi^n$ on $\R\times \R^d \times (0,\mc{R}_n]$ 
with intensity
\begin{equation}\label{rescalingeq}
n\,dt \otimes n^{d/2}\,dx \otimes \mu^n(dr).
\end{equation}
This corresponds to rescaling space and time from $(t,x)$ to $(n^{-1}t,n^{-1/2}x)$.
Importantly, we do not scale the impact $u$.
Each event of $\Pi^n$, independently, is neutral with probability $1-\v{s}_n$
and selective with probability $\v{s}_n$, where 
\begin{equation}\label{sdef}
\v{s}_n=
\begin{cases}
\frac{\log n}{n} & d=2, \\
\frac{1}{n} & d\geq 3.
\end{cases}
\end{equation}
In \cite{EFS2015} it was shown that in $d=1$, one should take $\v{s}_n=1/\sqrt{n}$.

Although not obvious for the {\slfvs} itself, when considering the dual process it is not hard to understand why 
the scalings \eqref{rescalingeq} and \eqref{sdef} should lead to a non-trivial limit. 

If we ignore the selective events, 
then a single ancestral lineage 
evolves as a pure jump process which is
homogeneous in both space and time. Write $V_r$ for the volume
of $\mc{B}_r(0)$.
The rate at which the lineage jumps from
$y$ to $y+z$ can be written
\begin{equation}
\label{jump of size z}
m_n(dz)=nu\int_0^{\mc{R}_n}n^{d/2}\frac{V_r(0,z)}{V_r}\mu^n(dr)\,dz,
\end{equation}
where $V_r(0,z)$ is the volume of ${\mc B}_r(0)\cap {\mc B}_r(z)$.
To see this, by spatial homogeneity, we may take the lineage to be at the 
origin in $\R^d$ before the jump, and then, in order for it to jump to
$z$, it must be affected by an event that covers both $0$ and $z$. If the
event has radius $r$, then the 
volume of possible centres, $x$, 
of such events is $V_r(0,z)$ and so the intensity with which such a centre
is selected is
$n\,n^{d/2}V_r(0,z)\mu^n(dr)$. 
The parental location is
chosen uniformly from the ball $\mc{B}_r(x)$, so the probability that 
$z$ is chosen as the parental location is $dz/V_r$ and the probability
that our lineage is actually affected by the event is $u$.
Combining these yields~\eqref{jump of size z}.

The total rate of jumps is
\begin{eqnarray}
\int_{\R^d}m_n(dz)&=&\int_0^{\mc{R}_n}nu\,n^{d/2}\frac{1}{V_r}
\int_{\R^d}\int_{\R^d}\1_{|x|<r}\1_{|x-z|<r}dx\,dz\,\mu^n(dr)
\nonumber\\
&=&\int_0^{\mc{R}_n}nu\,n^{d/2}V_r\mu^n(dr)\nonumber \\
&=&n u V_1\int_0^{\mc{R}}r^d\mu(dr)=\Theta(n),\label{jump rate}
\end{eqnarray}
and the size of each jump is $\Theta(n^{-1/2})$ and so in the limit a single lineage will evolve according to
a (time-changed) Brownian motion. 

Now, consider what happens at a selective event. The two new lineages
are created at a separation of order $1/\sqrt{n}$. If we are to see both lineages in the limit then they must move apart 
to a separation of order $1$ (before, possibly, coalescing back together). Ignoring possible interactions with other lineages, the probability that a pair of lineages 
makes such an excursion is of order $1$ in $d\geq 3$, order $1/\log n$ in $d=2$ and order $1/\sqrt{n}$ in $d=1$. 
Therefore, in order to have a positive probability of seeing branching in the scaling limit, in $d\geq 3$ we only 
need that there are a positive number of selective events in unit (rescaled) time, and, for this, it is enough that $\v{s}_n$ 
is order $1/n$. However, for $d=2$, we need order $\log n$ branches before we expect to find one that is visible to us, 
hence the choice $\v{s}_n=\log n/n$. 



\begin{remark}
Our scaling mirrors that described in \cite{durrett/zahle:2007} for a model of a {\em hybrid zone}
(by which we mean a region in which we see both genetic types) which
develops around a boundary between two regions, in one of which type $a$ individuals are selectively favoured and in the 
other of which type $A$ individuals are selectively favoured.  In contrast to our continuum setting, 
their model is a spin system in which exactly one individual lives at each point of $\Z^d$.
\end{remark}

Before formally stating our main result, we need some notation. We shall denote by $\text{BBM}(p, V)$ binary branching 
Brownian motion started from the point $p\in\R^d$, with branching rate $V$ and diffusion constant given by
\begin{equation}
\label{first sigma squared}
\sigma ^2 =
\tfrac{1}{d}\int_{\R^d}|z|^2 m^n(dz)
= \tfrac{1}{d}\int_{\R^d}\int_0^\infty |z|^2 u \frac{V_r (0,z)}{V_r}\mu(dr)\,dz
\end{equation}
where $m^n(dz)$ is defined in \eqref{jump of size z}.
In other words, during their lifetime, which is exponentially distributed with parameter $V$, individuals
follow $d$-dimensional Brownian motion with diffusion constant $\sigma^2$, at the end of which they die, leaving behind
at the location where they died exactly two offspring.
We view $\text{BBM}(p,V)$ as a set of (continuous) paths, each starting at $p$, with precisely one path following each possible distinct sequence of branches. 

Similarly, we write $\mc{P}^{(n)}(p)$ for the dual process of Definition~\ref{dualprocessdefn},
rescaled as in \eqref{rescalingeq} and \eqref{sdef},
started from a single individual at the point $p\in\R^d$ and
viewed as a collection of paths. Each path traces out a `potential ancestral lineage', defined exactly as
the ancestral lineages in the neutral case except that at each selective event, if a lineage is affected then 
it jumps to the location of (either) one of the `potential parents'. Precisely one potential ancestral lineage follows
each possible route through the branching and coalescing dual process.

We define the events
\begin{align}
\mc{D}_n(\epsilon, T)=&\l\{\forall l\in \mc{P}^{(n)}(p),\;\exists l'\in \text{BBM}(p,V):\;\sup\limits_{t\in[0,T]}|l(t)-l'(t)|\leq\epsilon\r\},\notag\\
\mc{D}'_n(\epsilon, T)=&\l\{\forall l\in \text{BBM}(p,V),\;\exists l'\in \mc{P}^{(n)}(p):\;\sup\limits_{t\in[0,T]}|l(t)-l'(t)|\leq\epsilon\r\}.\label{Devents}
\end{align}

\begin{theorem}\label{result d>1}
Let $d\geq 2$. There exists $V\in (0,\infty)$ such that the following holds.
Let $T<\infty$, $p\in \R^2$; then given $\epsilon >0$, there exists
$N\in\N$ such that, for all $n\geq N$ there is a coupling between $\text{BBM}(p,V)$ and 
$\mc{P}^{(n)}(p)$ with
$\P\l[\mc{D}_n(\epsilon,T)\cap\mc{D}'_n(\epsilon,T)\r]\geq 1-\epsilon.$
\end{theorem}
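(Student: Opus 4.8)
\section*{Proof proposal}

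\emph{The plan is to} build the coupling by tracking, simultaneously, the branching structure (``skeleton'') and the spatial motion of the lineages, and to argue that under the scalings \eqref{rescalingeq}--\eqref{sdef} the rescaled dual $\mc{P}^{(n)}(p)$ decomposes into a bounded number of ``macroscopic'' lineages --- those that will survive to be visible at scale $1$ --- plus a vast cloud of ``microscopic'' branch attempts that are annulled by coalescence almost immediately. The macroscopic lineages are the ones that should couple to $\text{BBM}(p,V)$. The first step is to make precise the heuristic around \eqref{jump rate}: a single potential lineage, run in isolation and with only neutral events, is a spatially homogeneous pure-jump process with jump rate $\Theta(n)$ and jump size $\Theta(n^{-1/2})$, so by a Donsker/Skorokhod-embedding argument it converges (uniformly on $[0,T]$, with explicit modulus-of-continuity control) to a Brownian motion with diffusion constant $\sigma^2$ as in \eqref{first sigma squared}. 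One must also show two lineages at macroscopic separation essentially do not interact on $[0,T]$: the coalescence rate $\eta(x)$ for lineages at separation $x$ is supported on $|x|\leq 2\mc R_n = 2\mc R/\sqrt n$, so two lineages that are $\Omega(1)$ apart coalesce with probability $o(1)$; and by the same token, once a branch ``escapes'' to separation $\Omega(1)$ it behaves like an independent new Brownian particle.

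\emph{Next I would} quantify the escape probability, which is the crux of the dimension dependence. Condition on a selective event: it produces two lineages at separation $\Theta(n^{-1/2})$. Consider the difference process of the two daughter lineages (until one of them is killed by a further branching or both are affected by a common event); ignoring interactions with the rest of the system, this is again a homogeneous jump process, and before coalescence it is (a time-change of) Brownian motion. The event ``the pair reaches separation $\geq \rho$ for a fixed small $\rho>0$ before coalescing'' is, after the diffusive scaling, the event that a $d$-dimensional Brownian motion started at distance $\Theta(n^{-1/2})$ from the origin exits a ball of radius $\Theta(1)$ before entering a ball of radius $\Theta(n^{-1/2})$ (the coalescence core has radius $\Theta(n^{-1/2})$). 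Using the harmonic function $\log|x|$ in $d=2$ this probability is $\Theta(1/\log n)$; using $|x|^{2-d}$ in $d\geq3$ it is $\Theta(1)$. Combined with $\v{s}_n$ from \eqref{sdef}, the number of ``successful'' branchings experienced by a given macroscopic lineage per unit rescaled time is
\begin{equation*}
(\text{selective event rate})\times\v{s}_n\times(\text{escape probability}) = \Theta(n)\cdot\Theta(\v{s}_n)\cdot\Theta(s_{\rm esc})=\Theta(1),
\end{equation*}
which identifies the limiting branching rate $V$. One then shows, via a second-moment / Poisson-thinning estimate, that the successful branchings along a fixed lineage converge to a Poisson process of rate $V$, and that the escaping daughter is born (at scale $1$) essentially at the location of the parent, because the whole escape excursion happens on spatial scale $O(\rho)$ in time $O(\rho^2\log n/n)=o(1)$ --- so sending $\rho\to0$ after $n\to\infty$ makes the displacement negligible, giving binary branching at a point as in $\text{BBM}(p,V)$.

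\emph{Then I would} assemble the coupling by induction on the number of successful branchings up to time $T$ (which is a.s.\ finite in the limit, and with probability $\geq1-\epsilon$ is bounded by some $K=K(\epsilon,T,V)$). On each inter-branching interval, couple each macroscopic dual lineage to the corresponding BBM path using the single-lineage invariance principle and Skorokhod embedding so that the two agree to within $\epsilon/2K$ uniformly; at each successful branching, match it to a BBM branch event at the (almost) same time and place, incurring a further $o(1)$ spatial error; and discard all the microscopic (non-escaping) branch attempts --- these coalesce back within spatial distance $O(n^{-1/2})=o(\epsilon)$ and time $o(1)$, so every path of $\mc{P}^{(n)}(p)$ that is \emph{not} one of the $\leq2^K$ macroscopic survivors stays within $\epsilon$ of the macroscopic lineage it branched off from, hence within $\epsilon$ of a BBM path; this gives $\mc{D}_n(\epsilon,T)$. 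Conversely, each BBM path is matched by construction to a macroscopic dual lineage, giving $\mc{D}'_n(\epsilon,T)$. Taking $n$ large enough that all the $o(1)$ errors, the probability of more than $K$ branchings, the probability of an ``anomalous'' long escape excursion, and the probability of an unexpected coalescence between macroscopic lineages are each $<\epsilon/(\text{const})$ completes the argument.

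\emph{The hard part will be} controlling the interactions in $d=2$: the total branching rate $\Theta(n)\cdot\v{s}_n=\Theta(\log n)$ blows up, so there are $\Theta(\log n)$ branch attempts per lineage per unit time, and one must show that \emph{all but} $\Theta(1)$ of them are annulled by coalescence \emph{before} any of them can perturb a macroscopic lineage or create a spurious second macroscopic particle. This requires a genuinely quantitative estimate on the system of coalescing walks --- essentially that a cloud of $\Theta(\log n)$ lineages started within $O(n^{-1/2})$ of a point collapses back to $O(1)$ lineages within $O(n^{-1/2})$ and in time $o(1)$ with overwhelming probability --- exploiting that planar Brownian motion does not hit points (so an escaped particle is ``far'' and safe) while the un-escaped ones are trapped in the $O(n^{-1/2})$-core where the coalescence rate is $\Theta(n)$ and hence coalescence is essentially instantaneous on the rescaled clock. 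Making this ``instantaneous annullation'' rigorous, uniformly over the (random, growing) number of branch attempts and in the presence of dependence between overlapping events, is the main technical obstacle; the $d\geq3$ case avoids it entirely since there the branching rate stays $\Theta(1)$.
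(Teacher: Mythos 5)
Your outline follows the same heuristic as the paper's own sketch (escape probability $\Theta(1/\log n)$ balancing the $\Theta(\log n)$ branch attempts per unit time), but as a proof it has two genuine gaps. The first is the identification of the branching rate $V$. Your Bessel/harmonic-function computation only gives two-sided bounds of order $1/\log n$ for the escape probability: the excursion starts at separation $\Theta(n^{-1/2})$ and is absorbed in a coalescence region whose radius is comparable to the jump size, so the behaviour at the inner boundary is \emph{not} captured by the Brownian approximation, and the ratio of scale functions only pins the probability down up to constants. The theorem asserts the existence of a single $V$, so you need actual convergence of $(\log n)\,\P[\text{pair diverges}]$, not just $\Theta(1/\log n)$; your ``second-moment / Poisson-thinning estimate'' addresses Poissonianity of the successful branch times but not this convergence. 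The paper flags exactly this point and proves it separately (its Lemma 4.7), by extracting a convergent subsequence and then bootstrapping, via the exact scaling relation $\zeta^n \stackrel{d}{=} \zeta^N (N/n)^{1/2}$ and the strong Markov property at the first exit from a mesoscopic radius, to convergence of the whole sequence. Without some argument of this kind your plan produces a tight family of branching rates but no limit object to couple with.

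The second gap is the coupling mechanism itself, which you defer to ``the hard part'' without supplying the idea that resolves it. All lineages of $\mc{P}^{(n)}(p)$ are driven by one Poisson point process, so your induction on successful branchings cannot simply treat macroscopic lineages as independent Brownian particles; some construction must manufacture that independence. Moreover, your framing of the obstacle (a cloud of $\Theta(\log n)$ simultaneous microscopic lineages collapsing) is not what actually needs to be proved, and as stated it would be harder than necessary: the key quantitative fact is that branch attempts along a lineage are spaced $\Theta(1/\log n)$ apart in time while each attempt resolves (coalesces or diverges) within time $(\log n)^{-c}$, $c\geq 3$, so with high probability they are resolved one at a time and no cloud ever forms. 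The paper exploits this through the ``caterpillar'' (branching suppressed for time $(\log n)^{-c}$ after each branch, with the suppressed events shown to be harmless at cost $\mc{O}((\log n)^{1-c})$ per window), and then obtains independence between diverged families by running each caterpillar on its own independent driving point process and proving (its Lemma 4.14) that piecing these together yields a Poisson point process with the correct intensity, so the resulting branching caterpillar coincides with the true rescaled dual with high probability. You would need an equivalent of both devices — a resolution-before-next-attempt estimate and an independence-manufacturing coupling — for your step-three induction to be valid. (A smaller slip: conditioned on escape to a fixed separation $\rho$, the excursion takes time of order $\rho^2$, not $\mc{O}(\rho^2\log n/n)$; this does not break your double limit $n\to\infty$ then $\rho\to 0$, but the bookkeeping of where and when the BBM branch point is placed must account for it.)
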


We will give a proof of Theorem \ref{result d>1} only for $d=2$.
The case $d\geq 3$ follows from a simplified version of the $2$-dimensional proof presented here.

\subsection{Sketch of proof}
\label{sketch of proof}

Consider a pair of potential ancestral lineages, $\xi^{n,1}$ and $\xi^{n,2}$, 
created in some selective event which, without loss of generality, we suppose happens at time zero.
Suppose that we forget about further branches and when $\xi^{n,i}$ is affected by a neutral event it jumps to the location of the parent; when it
is affected by a selective event it jumps to the location of one of the potential parents 
(picked at random). Thus $\xi^{n,1}$ and $\xi^{n,2}$ are compound Poisson processes which 
interact when (and only when) $|\xi^{n,1}-\xi^{n,2}|\leq 2\mc{R}_n$.

We choose a large constant $c>0$.
We begin by showing that $\xi^{n,1}$ and $\xi^{n,2}$ have probability $\Theta(1/\log n)$ of reaching a distance $1/(\log n)^c$ from each other without coalescing (we then say they have diverged). 
We also show that the probability that $\xi^{n,1}$ and $\xi^{n,2}$ have not diverged or coalesced by time $1/(\log n)^c$ is $o(1/(\log n))$, so coalescence will be instantaneous in the limit.
Moreover, once they are $1/(\log n)^c$ 
apart, they won't get within distance $2\mc R_n$ of each other again on a timescale of $\mc{O}(1)$.
Hence from the point of view of our scaling they stay apart and evolve essentially independently of each other.

We exploit this observation by coupling the whole rescaled dual process with a process in which diverged lineages move independently. 
We use an object that we call a caterpillar which is defined in the same way as the rescaled dual process, except that selective events only result in branching if at least time $1/(\log n)^c$ has elapsed since the previous branching.
We stop the caterpillar at the first time a pair of lineages has either diverged or failed to coalesce in time $1/(\log n)^c$ after branching.
We then start two new independent caterpillars at the positions of the pair of lineages, and continue in the same way, giving a `branching caterpillar'.

The branching caterpillar can be coupled with the rescaled dual process by piecing together the independent Poisson point processes of events which drive each caterpillar into a single driving Poisson point process. 
We show that under the coupling, the branching caterpillar and the rescaled dual process coincide with high probability, using the result that lineages at a separation of at least $1/(\log n)^c$ are unlikely to interact again.
Each individual caterpillar converges in an appropriate sense to a segment of a Brownian path run for an exponentially distributed lifetime, so we can couple the branching caterpillar with the limiting branching Brownian motion.

This programme is carried out in Section~\ref{proof}.

\section{Biological background}
\label{biology}

In this section, we shall set our work in the context of the substantial
biological literature. The reader concerned only with the mathematics can safely skip to
Section~\ref{proof}.

The interplay between natural selection and the spatial structure of a population is a question
of longstanding interest in population genetics.
\cite{fisher:1937} studied the advance of selectively advantageous genetic types through
a one-dimensional population using the deterministic differential 
equation now known as the Fisher-KPP equation. 
This equation also makes sense in higher dimensions, but ignores {\em genetic drift}
(the randomness due to reproduction in a finite population). Work incorporating genetic
drift has been restricted to either one spatial dimension (see \cite{barton/etheridge/kelleher/veber:2013b} and
references therein) or, more commonly, to subdivided populations.
\cite{maruyama:1970} studied the probability of {\em fixation} of an advantageous genetic type
(the probability that eventually the whole population carries this genetic type) in 
a subdivided population. The assumptions made in that article are rather strong: if we think of the 
population as living on islands (or in colonies), then each island has constant total population size and its 
contribution to the next generation is in proportion to that size. Under these assumptions, the 
probability of fixation is not affected by the population structure: it is the same as for 
a gene of the same selective advantage in an unstructured population of the same total size.
Much subsequent work retained Maruyama's assumptions, and so it is often assumed that spatial structure has 
no influence on the accumulation of favourable genes. However,
\cite{barton:1993} showed that the extra stochasticity produced by the introduction of local extinctions and colonisations
could significantly change the fixation probability. This work was extended in, for example, 
\cite{cherry:2003} and \cite{whitlock:2003}. 

A fundamental problem in genetics is to identify which parts of
the genome have been the target of natural selection. The random nature
of reproduction in finite populations means that some
genetic types (alleles) will be carried by everyone in the population, 
even though they convey no particular selective advantage. However, if a favourable mutation
arises in a population and `sweeps' to fixation 
(i.e.~increases in frequency until everybody carries it), we
expect the genealogical trees (that is the trees of ancestral lineages) relating individuals in a sample from the population to differ
from those that we observe in the absence of selection. In particular, they will be more `star-shaped'.
Of course we cannot observe the genealogical trees directly, and so, instead, geneticists exploit the
fact that genes are arranged on chromosomes: the ancestry at another position on the same chromosome will be
correlated with that at the part of the genome that is the target of selection. 
In order to detect selection one therefore examines the patterns of 
variation at other points on the same chromosome, so-called linked loci.
 

In order for this approach to work, we require
sufficient variability at the linked loci that we see a signal of the distortion in the
genealogical tree.
This means that we must consider the genealogy of a sample from the population on the 
timescale set by the neutral mutation rate.  If selection is too strong, the genealogy will
be very short and we see no mutations and so we can recover no information about the 
genealogical trees; if selection is too weak, we won't be able to distinguish the
patterns from those seen under neutral evolution. 

Since neutral mutation rates are rather small, this means that we are interested in long timescales. 
Without selection, ancestral lineages in our model follow symmetric random walks with bounded variance jumps 
and so we expect a diffusive scaling to capture patterns of neutral variation. Since we are looking for deviations from those patterns due to the action of selection, it makes sense to consider a diffusive 
rescaling in the selective case too.  Thus, if the neutral mutation rate is $\mu$, then we look 
at the rescaled dual process with
$n=1/\mu$. If the branches produced by selection persist long enough to be visible at this scale, then
there is positive probability that the pattern of (neutral) variation we see in a sample from the population will look different from the pattern we'd expect without selection.

Our results in this paper are relevant to populations evolving in spatial continua. The question they 
address is `When can we hope to detect a signal of natural selection in data?'. Whereas in
the classical models of subdivided populations it is typically assumed that the population in each `island' is large, so that
neighbourhood size is big, by fixing the `impact' parameter $u$ in our model, we are assuming that
neighbourhood size is small. As a result, reproduction events are somewhat akin to local extinction and 
recolonisation events, in which a significant proportion of the local population is replaced in 
a single event. 
Our main result shows that our ability to detect selection is then critically
dependent on spatial dimension.  For populations living in at least three spatial 
dimensions (of which there are very few), spatial structure has a rather weak effect.  However, in
two spatial dimensions, selection must be stronger and in one spatial dimension (as appropriate
for example for populations living in intertidal zones) much stronger, before we can expect
to be able to detect it. The explanation is that in low dimensions, it is harder for
individuals carrying the favoured gene to escape the competition posed by close relatives who
carry the same gene. In our mathematical work, this is 
reflected in the vast majority of branches in our dual process being cancelled by a coalescence event on a timescale
which is negligible compared to the timescale set by the neutral mutation rate so that no evidence of these branches having occurred will be seen in the pattern of neutral mutation.

\section{Proof of Theorem~\ref{result d>1}}
\label{proof}

Our proof is broken into two steps. First in Subsection~\ref{excursionsec} we consider how the pair of potential ancestral lineages created during a selective event interact with each other.
In particular we find asymptotics for the probability that they diverge in a short time.
This will allow us to identify the branching
rate in the limiting Brownian motion. Then in Subsection~\ref{convtobbmsec}
we define the caterpillar and show how to couple the dual of the {\slfvs} to a system of branching caterpillars. With this construction in hand, Theorem \ref{result d>1} follows easily.

\subsection{Pairs of paths}
\label{excursionsec}

In this subsection we are interested in the behaviour of a pair of potential ancestral
lineages in the rescaled dual. In order that they be uniquely defined, if either is hit by a selective event then
we (arbitrarily) declare that it jumps to the location of the first potential parent sampled in 
that event. In particular, if they are both affected by the same event, then they will necessarily
coalesce.
We write $\xi^{n,1}$ and $\xi^{n,2}$ for the resulting potential ancestral lineages and 
$$\eta^n=\xi^{n,1}-\xi^{n,2}$$ 
for their separation.

Throughout this subsection, we use the notation $\P_{[r,r']}$ to mean that 
$|\eta^n_0|\in [r,r']$ and we adopt the convention that estimates of $\P_{[r,r']}[B]$ 
hold uniformly for all initial laws with mass concentrated on $[r,r']$. We extend this notation to open 
intervals in the obvious manner. 
We will also write $\P_r=\P_{[r,r]}$. 

We are concerned with the behaviour of two potential ancestral lineages created 
during a selective event which, without loss of generality, we suppose to happen at time $0$.
We shall then refer to $\eta^n$ as an excursion. 
In this case $|\eta^n_0|\leq 2\mc{R}_n$ and we wish to establish whether or not 
$|\eta_t^n|$ ever exceeds
\begin{equation}\label{gammadef}
\gamma_n=\frac{1}{(\log n)^{c}},
\end{equation}
where, in this section, we suppose that $c\geq 3$.

\begin{remark}
We will, eventually, set $c=4$, although any larger constant $c$ would give the same result;
for now we keep the dependence on $c$ visible in our estimates.
\end{remark}

For reasons that will soon become apparent, it is convenient to assume that $n$ is large enough that $7\mc{R}_n<\gamma_n$.

The picture of an excursion $\eta^n$ that we would like to build up is, loosely speaking, as follows. 
\begin{enumerate}
\item With probability $\kappa_n=\Theta(\frac{1}{\log n})$,
$|\eta^n|$ reaches displacement $\gamma_n$ within time $1/(\log n)^c$ and then $\xi^{n,1}$ and $\xi^{n,2}$ will not interact again before a fixed time $T>0$. Consequently the displacement between them becomes macroscopic and we see two distinct paths in the limit.  Moreover, $\kappa_n\log n \to \kappa\in(0,\infty)$ as $n\to\infty$.
\item With probability $1-\Theta(\frac{1}{\log n})$,
$|\eta^n|$ does not reach displacement $\gamma_n$, and $\xi^{n,1}$ and $\xi^{n,2}$ coalesce within time $1/(\log n)^c$. 
In this case the difference between them is microscopic and we see only one path in the limit. 
\item All other outcomes have probability $\mc O\big(\frac{1}{(\log n)^{c-3/2}}\big)$, which means that we won't see them
in the limit.
\end{enumerate}
Much of the work in making this rigorous results from the fact that $\xi^{n,1}$, $\xi^{n,2}$
only evolve independently when their separation is greater than $2\mc{R}_n$.  Our strategy is 
similar to that in the proof of Lemma~4.2 in \cite{etheridge/veber:2012}, but here we require
a stronger result: rather than an estimate of the form $\kappa_n\geq C/\log n$ we need
convergence of $ \kappa_n\log n$.

\subsubsection{Inner and outer excursions}
\label{inoutexc}

We shall characterise the behaviour of $\eta^n$ using several stopping times. Set $\tau^{out}_{0}=0$ and define inductively, for $i\geq 0$,
\begin{align}
\tau^{in}_i&=\inf\{s>\tau^{out}_i\-|\eta^n_s|\geq 5\mc{R}_n\},\label{intimes}\\
\tau^{out}_{i+1}&=\inf\{s>\tau^{in}_{i}\-|\eta^n_s|\leq 4\mc{R}_n\}.\notag
\end{align}
We refer to the interval $[\tau^{out}_{i},\tau^{in}_i)$ (and also to the path of $\eta^n$ during it) as the $i^{th}$ inner excursion and similarly to $[\tau^{in}_{i-1},\tau^{out}_i)$ (and corresponding path) as the $i^{th}$ outer excursion. 

Since a jump of $\eta^n$ has displacement at most $2\mc{R}_n$, although the initial ($0^{th}$)
inner excursion starts in $(0,2\mc{R}_n]$, for $i\geq 1$ we have $|\eta^n_{\tau^{in}_i}|\in [5\mc{R}_n,7\mc{R}_n]$ and $|\eta^n_{\tau^{out}_i}|\in[2\mc{R}_n,4\mc{R}_n]$. 

\begin{defn}\label{ioexctypedef}
We define the stopping times
\begin{align*}
\tau^{coal}&=\inf\{s>0\-|\eta^n_s|=0\},\\
\tau^{div}&=\inf\{s>0\-|\eta^n_s|\geq \gamma_n\},\\
\tau^{over}&=\frac{1}{(\log n)^c}.
\end{align*}
We shall say that the $i$th inner excursion coalesces if 
$\tau^{coal}\in [\tau^{out}_{i},\tau^{in}_i)$. Similarly, the $i$th outer excursion diverges if
$\tau^{div}\in [\tau^{in}_{i-1},\tau^{out}_i)$.

We define $\tau^{type}=\min(\tau^{coal},\tau^{div},\tau^{over})$ and say that $\eta^n$
\begin{enumerate}
\item coalesces if $\tau^{type}=\tau^{coal}$,
\item diverges if $\tau^{type}=\tau^{div}$,
\item overshoots if $\tau^{type}=\tau^{over}$.
\end{enumerate}
Since almost surely $\eta^n$ only jumps a finite number of times before time $(\log n)^{-c}$, almost surely $\tau^{type}$ occurs during either an inner or an outer excursion, whose index we
denote by $i^*$.
\end{defn} 

We use $\zeta^n$ to denote the distribution of the distance between the two 
potential parents sampled during a selective event. 

\begin{lemma}\label{istar}
There exists $\alpha\in(0,1)$ such that, uniformly in $n$, 
$\P_{\zeta^n}\l[i^*> m\r]\leq \alpha^m$.
\end{lemma}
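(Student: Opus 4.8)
The plan is to show that within each inner or outer excursion there is a probability bounded below (uniformly in $n$ and in the starting configuration) that $\tau^{type}$ occurs during that excursion, i.e. that one of coalescence, divergence or overshoot is triggered before the excursion ends. Geometric tail decay of $i^*$ then follows immediately: if $p>0$ is such a uniform lower bound, then $\P_{\zeta^n}[i^* > m] \leq (1-p)^m$, so we may take $\alpha = 1-p$.

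First I would treat the inner excursions. An inner excursion starts with $|\eta^n| \in (0, 7\mc{R}_n]$ (at most $2\mc{R}_n$ for the $0^{th}$ one, in $[5\mc{R}_n, 7\mc{R}_n]$ otherwise) and ends either when $|\eta^n|$ reaches $5\mc{R}_n$ or, if coalescence or overshoot happens first, at $\tau^{type}$. During an inner excursion the two lineages are at separation at most $7\mc{R}_n < \gamma_n$, so they are genuinely interacting and divergence cannot occur. The key point is that while $|\eta^n| \leq 7\mc{R}_n$, there is a single event (one of radius close to $\mc{R}$, so radius in $(\mc{R}_n - \delta_n, \mc{R}_n]$ in rescaled units) that covers both lineages, and conditionally on such an event occurring and marking both lineages, there is a probability bounded below that the two parental locations are sampled to coincide — forcing coalescence. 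The rate of such "both-covered, both-marked" events is $\Theta(n)$ by the computation in \eqref{jump rate} (the relevant volume $V_r(0,z)$ is bounded below when $|z| \leq 7\mc{R}_n$ and $r$ is near $\mc{R}_n$, and the marking probability is $u^2$, a constant). Hence over a time interval of length $\Theta(1/n)$ there is a probability bounded below of such an event, and thus of coalescence, before $|\eta^n|$ can escape past $5\mc{R}_n$ (each jump has size at most $2\mc{R}_n$, so escape itself also takes a bounded number of events). So either the inner excursion coalesces (triggering $\tau^{type}$), or it ends by reaching $5\mc{R}_n$; but in the latter case I need to pass to the outer excursion.

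Next, the outer excursions: an outer excursion starts with $|\eta^n| \in [5\mc{R}_n, 7\mc{R}_n]$ and runs until $|\eta^n|$ drops back to $4\mc{R}_n$, or until $\tau^{div}$ or $\tau^{over}$ intervenes. Here I would argue that there is a probability bounded below that the outer excursion diverges, i.e. $|\eta^n|$ reaches $\gamma_n$ before returning to $4\mc{R}_n$. During an outer excursion the separation exceeds $4\mc{R}_n > 2\mc{R}_n$, so $\xi^{n,1}$ and $\xi^{n,2}$ move as independent (time-changed, approximately Brownian) jump processes, and $\eta^n$ behaves like a random walk with $\Theta(n)$-rate, $\Theta(n^{-1/2})$-size symmetric jumps started near $5\mc{R}_n$, which must choose between hitting (a neighbourhood of) $4\mc{R}_n$ and hitting $\gamma_n$. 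By a standard gambler's-ruin/harmonic-function estimate for such a walk — comparing $|\eta^n|$ to the Brownian scale function, which in $d=2$ is logarithmic — the probability of reaching $\gamma_n$ first is $\Theta(1/\log n)$, hence in particular bounded below by $c_1/\log n$. This is \emph{not} a uniform-in-$n$ lower bound, so by itself it does not close the argument: $\P[\text{excursion } i \text{ neither coalesces nor diverges nor overshoots}]$ could be $1 - \Theta(1/\log n)$, giving only $\alpha = 1 - \Theta(1/\log n)$, which degrades with $n$.

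The resolution — and this is the main obstacle — is that I cannot get a uniform bound from divergence alone; I must use $\tau^{over}$. The point is that a full inner-then-outer cycle takes at least some \emph{macroscopic-on-the-right-scale} amount of time, or else the escape/return dynamics force many events. More precisely: to complete an inner excursion (escape from $\sim\mc{R}_n$ to $5\mc{R}_n$) and then an outer excursion that returns to $4\mc{R}_n$ without ever reaching $\gamma_n$, the difference process $\eta^n$ must remain confined to the annulus $[2\mc{R}_n, \gamma_n]$; since $\eta^n$ is an $\Theta(n)$-rate random walk, over a time window of length $1/(\log n)^c = \tau^{over}$ it performs $\Theta(n/(\log n)^c)$ jumps, and one shows that in that many steps such a confined return has probability $1 - \Omega(1)$ of \emph{not} happening — equivalently, with probability bounded below, within time $\tau^{over}$ the walk has already either hit $\gamma_n$ (divergence) or hit $0$ (coalescence) or, failing both, $\tau^{over}$ itself is reached and $\eta^n$ overshoots. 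In all three cases $\tau^{type}$ has occurred. So the correct way to package the argument is: with probability bounded below (uniformly in $n$ and in the current separation $\leq 7\mc{R}_n$, hence certainly for initial law $\zeta^n$), $\tau^{type}$ occurs within a bounded number of excursions after the current one — say within $M$ excursions for a fixed $M$ — because after time $\tau^{over}$ elapses it must have occurred. Concretely, the number of inner/outer excursions completed before time $\tau^{over}$ is a.s. finite, and I would bound its expectation (or show it has a uniform-in-$n$ exponential tail) by noting each completed cycle requires $\eta^n$ to make an excursion from $\sim 5\mc{R}_n$ down to $\sim 4\mc{R}_n$, which costs $\Theta(1)$ events and therefore $\Theta(1/n)$ time; there can be at most $\Theta(\tau^{over} \cdot n) = \Theta(n/(\log n)^c)$ such cycles before $\tau^{over}$, but in fact a cleaner route is: the event $\{i^* > m\}$ implies $\eta^n$ has survived (neither coalesced nor diverged) through $m$ excursions, each of which independently has probability bounded below of ending in coalescence (from the inner part) — and \emph{that} bound, the coalescence bound from the inner-excursion analysis above, \emph{is} uniform in $n$. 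So in fact the geometric decay comes directly from the inner-excursion coalescence probability being bounded below by a constant, independent of $n$: every inner excursion has probability $\geq p_0 > 0$ of coalescing before reaching $5\mc{R}_n$, and $\{i^* > m\}$ forces $m$ consecutive failures of this, giving $\P_{\zeta^n}[i^* > m] \leq (1-p_0)^m =: \alpha^m$. I would therefore organize the final write-up around the inner-excursion coalescence estimate as the single uniform ingredient, treating the outer-excursion and overshoot analysis as not needed for this particular lemma (they enter later, for the $\kappa_n \log n \to \kappa$ statement).
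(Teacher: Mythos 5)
Your final packaging coincides with the paper's proof: a uniform-in-$n$ lower bound on the probability that each inner excursion ends in coalescence, combined with the strong Markov property applied at the times $\tau^{out}_i$, gives $\P_{\zeta^n}[i^*>m]\le(1-p_0)^m$. The paper's ingredient $(\dagger)$ is exactly your $p_0$, and, like you, the paper does not need the outer-excursion or overshoot analysis for this lemma, so your long detour through divergence probabilities and $\tau^{over}$ is correctly discarded in your final organisation.

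However, your justification of the uniform lower bound contains two concrete errors. First, an inner excursion with $i\ge 1$ starts from separation $|\eta^n_{\tau^{out}_i}|\in[2\mc{R}_n,4\mc{R}_n]$, and since every event has radius $r\le\mc{R}_n$, a single event can affect both lineages only when their separation is below $2r\le 2\mc{R}_n$ (indeed $V_r(0,z)=0$ for $|z|\ge 2r$); so your claim that ``both-covered, both-marked'' events occur at rate $\Theta(n)$ whenever $|\eta^n|\le 7\mc{R}_n$, and in particular that $V_r(0,z)$ is bounded below for $|z|\le 7\mc{R}_n$, is false. One must first use one or two jumps, each affecting a single lineage and with probability bounded below bringing the separation well inside $2\mc{R}_n$, before a covering event can occur; this is precisely why the paper's $(\dagger)$ is phrased as hitting $0$ \emph{within the next three jumps} without exiting $\mc{B}_{5\mc{R}_n}(0)$. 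Second, coalescence does not require ``the two parental locations to be sampled to coincide'' (for independently sampled continuous locations that has probability zero); by the convention fixed at the start of Subsection~\ref{excursionsec}, all lineages marked by the same event jump to the same parental location (the first potential parent, in the selective case), so once both lineages lie in the ball of an event they coalesce with conditional probability $u^2$ coming only from the marking. With these repairs your argument becomes the paper's.
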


\begin{lemma}\label{Pover}
As $n\to \infty$,
$\P_{\zeta^n}\l[\eta^n\text{ overshoots}\r]=\mc{O}\l(\frac{1}{(\log n)^{c- 3/2}}\r)$.
\end{lemma}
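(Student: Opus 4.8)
The plan is to estimate the probability that the excursion $\eta^n$ survives until time $\tau^{over}=(\log n)^{-c}$ without either coalescing or diverging, by splitting this survival event according to the (random) number of inner/outer excursions that have been completed, and then controlling each inner and outer excursion separately. By Lemma~\ref{istar} we have $\P_{\zeta^n}[i^*>m]\leq\alpha^m$, so up to a term that is negligible (super-polynomially small in $\log n$), we may assume $i^*\leq M\log\log n$ for a suitably large constant $M$; the cost of this truncation is absorbed. It therefore suffices to bound, for each fixed $i\lesssim\log\log n$, the probability that the $i$th inner excursion neither coalesces nor causes $\eta^n$ to exit (reaching $5\mc{R}_n$), \emph{and} to bound the probability that an outer excursion (which starts at separation in $[5\mc{R}_n,7\mc{R}_n]$) returns to $[2\mc{R}_n,4\mc{R}_n]$ rather than diverging — but the key point is that the total elapsed time of all these excursions is capped at $(\log n)^{-c}$, so the number of \emph{outer} excursions that can be completed in that time window is itself constrained.

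The heart of the matter is a coupling/comparison for the outer excursions. When $|\eta^n|>2\mc{R}_n$ the two lineages $\xi^{n,1},\xi^{n,2}$ evolve independently, so $\eta^n$ is (the difference of two independent copies of) a spatially homogeneous compound Poisson process with jump rate $\Theta(n)$ and jumps of size $\Theta(n^{-1/2})$, i.e.\ it behaves like a random walk that will converge under diffusive scaling to a Brownian motion with the diffusion constant $\sigma^2$ of~\eqref{first sigma squared}. The probability that such a walk, started at separation $\Theta(\mc{R}_n)=\Theta(n^{-1/2})$, reaches $\gamma_n=(\log n)^{-c}$ before returning to $O(\mc{R}_n)$ is, by the standard two-dimensional gambler's-ruin/Green's-function estimate, of order $\log(\mc{R}_n)/\log(\gamma_n/\mc{R}_n)=\Theta(1/\log n)$ — here is where $d=2$ and the logarithmic recurrence enter. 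Consequently each outer excursion diverges with probability $\Theta(1/\log n)$, and the number $K_n$ of outer excursions completed before time $(\log n)^{-c}$ must be $\Omega((\log n)^{c-?})$ for non-divergence to persist: more precisely, an outer excursion from $\sim 5\mc{R}_n$ back down to $\sim 4\mc{R}_n$ takes time of order $\mc{R}_n^2=\Theta(1/n)$ in the diffusive regime, so in time $(\log n)^{-c}$ one can fit at most $O(n/(\log n)^c)$ of them, which is not directly a contradiction; instead, the right statement is that to \emph{avoid} divergence over $K_n$ independent-ish trials each succeeding (diverging) with probability $\Theta(1/\log n)$, one pays $(1-\Theta(1/\log n))^{K_n}$, and we will show $K_n$ is at least of order $(\log n)^{c-3/2}$ with the complementary probability being smaller order, whence the bound $\mc{O}((\log n)^{-(c-3/2)})$.

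The steps, in order, are: (i) truncate at $i^*\leq M\log\log n$ using Lemma~\ref{istar}, contributing a negligible term; (ii) establish the one-step outer-excursion divergence probability $q_n=\Theta(1/\log n)$ via a Green's-function/harmonic-function estimate for the (independent) difference walk in $d=2$, together with the complementary estimate that an outer excursion that does not diverge returns to $[2\mc{R}_n,4\mc{R}_n]$ within time $\Theta(\mc{R}_n^2 \log(\gamma_n/\mc{R}_n))$ with high probability; (iii) likewise show each inner excursion either coalesces or exits to $5\mc{R}_n$ within a short time with the relevant probabilities (the inner region is where coalescence happens, and the key input is that two lineages at separation $O(\mc{R}_n)$ coalesce with probability bounded below, as in the analysis behind Lemma~\ref{istar}); (iv) combine: conditioned on $i^*=i$ and on neither coalescence nor divergence by time $\tau^{over}$, the walk must have completed $\sim i$ alternating inner/outer excursions inside the time budget $(\log n)^{-c}$, and each of the $\Theta(i)$ outer excursions failed to diverge, giving a factor $(1-q_n)^{\Theta(i)}$; summing the resulting geometric-type series over $i$ up to $M\log\log n$ and optimizing yields $\mc{O}((\log n)^{-(c-3/2)})$, the $3/2$ coming from the slack needed to control the time budget and the truncation error simultaneously. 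The main obstacle is step~(iv): making precise that "not overshooting" forces enough completed outer excursions \emph{inside the time window} $(\log n)^{-c}$ — i.e.\ controlling the joint law of the number of excursions and their total duration — rather than just bounding each excursion in isolation; this requires a careful concentration estimate on the duration of a single outer (and inner) excursion, showing it is $\Theta(\mc{R}_n^2\log(\gamma_n/\mc{R}_n)) = \Theta(n^{-1}\log n)$ with overwhelming probability, so that $(\log n)^{-c}/(n^{-1}\log n) = \Theta(n(\log n)^{-(c+1)})$ excursions fit and the exponential decay $(1-q_n)^{\#\text{excursions}}$ is genuinely summable against the $\alpha^i$ tail from Lemma~\ref{istar}.
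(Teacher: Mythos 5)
Your outline shares the paper's starting point (decompose $\eta^n$ into inner and outer excursions, truncate the number of excursions via Lemma~\ref{istar}, and control excursion durations so that overshooting forces something unlikely), but the central accounting step (your step (iv)) does not work. First, the concentration claim it rests on is false: an outer excursion does \emph{not} last $\Theta(\mc{R}_n^2\log(\gamma_n/\mc{R}_n))=\Theta(n^{-1}\log n)$ with overwhelming probability. With probability $\Theta(1/\log n)$ the (independent) difference walk climbs to a fixed fraction of $\gamma_n$ before returning, after which it needs time of order $\gamma_n^2=(\log n)^{-2c}\gg n^{-1}\log n$ to come back; the strongest available statement is the one in Lemma~\ref{exclengths}, namely \eqref{Eouterbound}: an outer excursion fails to either diverge or return to $4\mc{R}_n$ within $(\log n)^{-c-1}$ only with probability $\mc{O}((\log n)^{1-c})$. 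Second, your mechanism for extracting the bound, paying $(1-\Theta(1/\log n))^{K_n}$ over $K_n\gtrsim(\log n)^{c-3/2}$ completed outer excursions, is both inconsistent and miscalibrated: it contradicts your own truncation $i^*\leq M\log\log n$ (on that event at most $\sim\log\log n$ excursions occur before $\tau^{\text{over}}$); if that many excursions really were completed without coalescing, Lemma~\ref{istar} already makes the probability $\alpha^{K_n}$, so the $(1-q_n)^{K_n}$ factor is beside the point; and in any case $(1-\Theta(1/\log n))^{(\log n)^{c-3/2}}=\exp(-\Theta((\log n)^{c-5/2}))$ is not of order $(\log n)^{-(c-3/2)}$. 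Worst of all, the event you dismiss as ``smaller order'' --- few excursions, one of which is atypically long --- is precisely the dominant contribution to the overshoot probability, so your scheme cannot produce the exponent $c-3/2$. (Two smaller points: $\alpha^{M\log\log n}$ is only polynomially small in $\log n$, so $M$ must depend on $c$; and the Green's-function/diffusive comparison is only legitimate for outer excursions, since inside $2\mc{R}_n$ the two lineages are not independent.)

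The paper's proof is the cleaner version of what you are after: truncate at $i^*\leq(\log n)^{1/2}$, with error $\alpha^{(\log n)^{1/2}}$ from Lemma~\ref{istar}; show via the jump-chain argument $(\ddagger)$ (geometric number of three-jump blocks, exponential waiting times at rate $\Theta(n)$) that each inner excursion ends within time $n^{-1/2}$ except with probability $\mc{O}(n^{-1/6})$; use \eqref{Eouterbound} for the outer excursions. Since $(\log n)^{1/2}(n^{-1/2}+(\log n)^{-c-1})\leq(\log n)^{-c}$ for large $n$, overshooting with $i^*<(\log n)^{1/2}$ forces at least one inner or outer excursion to be long, and a union bound gives $(\log n)^{1/2}\bigl(\mc{O}(n^{-1/6})+\mc{O}((\log n)^{1-c})\bigr)+\alpha^{(\log n)^{1/2}}=\mc{O}((\log n)^{3/2-c})$; the $3/2$ is exactly (truncation level) $\times$ (per-excursion long-duration probability), not a time-budget-versus-divergence trade-off. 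To repair your write-up you would need to replace step (iv) by this ``some excursion must be long'' union bound and drop the duration-concentration and $(1-q_n)^{K_n}$ claims.
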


\begin{lemma}\label{Pdiv}
As $n\to \infty$,
$\P_{\zeta^n}\l[\eta^n\text{ diverges}\r]=\Theta\l(\frac{1}{\log n}\r)$.
\end{lemma}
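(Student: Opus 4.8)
The plan is to estimate $\P_{\zeta^n}[\eta^n\text{ diverges}]$ from above and below by the same order $1/\log n$, exploiting the inner/outer excursion decomposition already set up. The key observation is that during an outer excursion, $|\eta^n|\geq 4\mc{R}_n>2\mc{R}_n$, so the two lineages $\xi^{n,1}$ and $\xi^{n,2}$ evolve independently there, and $\eta^n$ behaves like a difference of two independent compound Poisson processes — a random walk that, under the diffusive scaling, looks like a two-dimensional Brownian motion with diffusion constant of order $\sigma^2$. The question of whether $\eta^n$ diverges during the $i$th outer excursion (i.e.\ $|\eta^n|$ reaches $\gamma_n=1/(\log n)^c$ before returning to level $4\mc{R}_n$, starting from a separation in $[5\mc{R}_n,7\mc{R}_n]$) is then essentially a gambler's-ruin / annulus-exit problem for planar Brownian motion: the probability that a planar Brownian motion started at radius $\sim\mc{R}_n$ from the origin exits the annulus $\{4\mc R_n\leq |x|\leq \gamma_n\}$ through the outer boundary is, by the logarithmic harmonic function $\log|x|$, of order $\log(\gamma_n/\mc R_n)^{-1} = \Theta(1/\log n)$, since $\mc R_n=\mc R/\sqrt n$ and $\gamma_n = (\log n)^{-c}$ give $\log(\gamma_n/\mc R_n)\sim \tfrac12\log n$.

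First I would make this heuristic rigorous at the level of a \emph{single} outer excursion: show that, uniformly over starting laws concentrated on $[5\mc R_n,7\mc R_n]$, the probability $q_n$ that this outer excursion diverges (reaches $\gamma_n$) rather than returning to $4\mc R_n$ satisfies $q_n=\Theta(1/\log n)$. The upper bound follows by comparison with the optional stopping of $\log|\eta^n|$ (or a suitable discrete supermartingale correction accounting for the jumps, which have size $\leq 2\mc R_n$ and so perturb the logarithm only by lower-order terms) up to the exit time of the annulus; the lower bound follows by the matching submartingale estimate together with a uniform-over-$n$ lower bound on the probability of at least reaching $\gamma_n$ without being sent all the way back inside $4\mc R_n$ on the first excursion attempt. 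One must also control the possibility that the outer excursion lasts longer than $\tau^{over}=(\log n)^{-c}$; but Lemma~\ref{Pover} already tells us overshooting has probability $\mc O((\log n)^{-(c-3/2)})=o(1/\log n)$, so this contributes only to the error term. Next, I would sum over excursions: by Lemma~\ref{istar}, the index $i^*$ at which $\tau^{type}$ occurs has exponential tails uniformly in $n$, so the expected number of completed inner/outer excursion pairs before $\tau^{type}$ is $\mathcal O(1)$; combining this with the single-excursion estimate $q_n=\Theta(1/\log n)$ and the fact that distinct outer excursions are (conditionally) roughly independent attempts — each inner excursion returning $\eta^n$ to level $\leq 4\mc R_n$ before the next outer excursion begins at level $\geq 5\mc R_n$ — yields $\P_{\zeta^n}[\eta^n\text{ diverges}]=\Theta(1/\log n)$ after absorbing the $o(1/\log n)$ overshoot correction.

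The main obstacle I expect is the \emph{lower} bound, specifically proving that the $\Theta(1/\log n)$ divergence probability per outer excursion is not destroyed by repeated failures (returns to the inner region): naively, if each outer excursion diverges with probability $\asymp 1/\log n$ and there are $\mathcal O(1)$ of them, the total is $\asymp 1/\log n$, but one must rule out that the \emph{conditional} divergence probability decays after a return, and one must ensure the inner excursions themselves do not coalesce $\eta^n$ with probability so close to $1$ that essentially no outer excursion is ever attempted — i.e.\ one needs a uniform-in-$n$ lower bound on $\P[\tau^{in}_0<\tau^{coal}]$ and on the analogous quantity for later inner excursions. Handling the inner excursions is delicate precisely because there $|\eta^n|\leq 5\mc R_n$ is of order $\mc R_n$, the lineages are \emph{not} independent, and $\eta^n$ can jump to $0$ (coalesce) in a single event; I would control this by a direct computation with the jump kernel $m_n$ and the coalescence mechanism of Definition~\ref{dualprocessdefn}, showing that from any separation in $(0,5\mc R_n]$ there is an $\Omega(1)$ probability of escaping to level $5\mc R_n$ before coalescing, using that the jump sizes are comparable to the spatial scale $\mc R_n$ so that the walk $|\eta^n|$ is genuinely diffusive rather than being pinned near $0$. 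The upper bound, by contrast, is a routine supermartingale/optional-stopping argument once the logarithmic test function is chosen.
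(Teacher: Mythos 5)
Your overall architecture matches the paper's: the per-outer-excursion divergence probability is shown to be $\Theta(1/\log n)$ via the planar logarithmic (Bessel scale function) estimate --- the paper obtains this by a Skorohod embedding of the independent-walk difference $\hat\xi^{n,1}-\hat\xi^{n,2}$ into a two-dimensional Brownian motion (Lemma~\ref{exclengths}) rather than by your optional-stopping argument for $\log|\eta^n|$ with jump corrections, but that is a technical variant (note your version is fiddliest near the inner radius $4\mc R_n$, where jump size and radius are comparable; the embedding sidesteps this by comparing with exact Brownian hitting probabilities at radii shifted by $\pm 2\mc R_n$). The number of inner/outer excursion pairs is controlled by the geometric tail of $i^*$ (Lemma~\ref{istar}), overshoots are absorbed into an $o(1/\log n)$ error via Lemma~\ref{Pover}, and your upper bound ($\mc O(1)$ outer excursions, each diverging with probability $\mc O(1/\log n)$) is essentially the paper's comparison of two geometric random variables.

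The genuine gap is in your lower bound, namely the claim that from \emph{any} separation in $(0,5\mc R_n]$ there is an $\Omega(1)$ probability of escaping to level $5\mc R_n$ before coalescing. This fails when the impact is $u=1$ (which the model allows, $u\in(0,1]$): if $|\eta^n_0|=x$ with $x/\mc R_n\to 0$, then all but a fraction $\Theta(x/\mc R_n)$ of the events affecting one lineage cover both, and with $u=1$ every shared event marks both lineages and coalesces them, so the escape probability tends to $0$ with $x/\mc R_n$; for $u<1$ your claim is salvageable, since a shared event marks exactly one lineage with probability bounded below and throws it to a distance of order $\mc R_n$, but the uniform-in-$x$ statement is not available in the generality of the model. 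The paper never needs such a uniform bound: it uses the standing assumption that $\mu$ charges radii near $\mc R$ (so that $\P[\zeta^n\geq \mc R_n]\geq \epsilon$ uniformly in $n$) and then only requires the escape estimate, its condition $(\star)$, for initial separations in $[\mc R_n,4\mc R_n]$, where a constant fraction of events hit one lineage but not the other; the lower bound is then $\epsilon\delta\,\P_{[5\mc R_n,7\mc R_n]}[\tau^{\gamma_n}<\tau_{4\mc R_n}]$ minus the overshoot probability. Your proposal never uses the law $\zeta^n$ of the initial separation in the lower bound, so as stated it cannot be completed; with that fix (restrict the escape claim to $x\geq \mc R_n$ and pay the constant $\epsilon$ for the event $\{\zeta^n\geq\mc R_n\}$), the rest of your argument goes through.
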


\begin{lemma}\label{Pcoal}
As $n\to \infty$,
$\P_{\zeta^n}\l[\eta^n\text{ coalesces}\r]=1-\Theta\l(\frac{1}{\log n}\r)$.
\end{lemma}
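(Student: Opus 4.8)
The plan is to read off Lemma~\ref{Pcoal} from Lemmas~\ref{Pover} and~\ref{Pdiv}: it is a bookkeeping consequence of the trichotomy built into Definition~\ref{ioexctypedef}. By construction $\tau^{type}=\min(\tau^{coal},\tau^{div},\tau^{over})$, the events $\{\eta^n\text{ coalesces}\}$, $\{\eta^n\text{ diverges}\}$, $\{\eta^n\text{ overshoots}\}$ are pairwise disjoint (at a given stopping time $|\eta^n|$ cannot be simultaneously $0$ and at least $\gamma_n$, and $\eta^n$ a.s.\ does not jump at the deterministic time $(\log n)^{-c}$), and, as recorded in the closing line of Definition~\ref{ioexctypedef}, they exhaust the sample space up to a null event. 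Hence
\[
\P_{\zeta^n}[\eta^n\text{ coalesces}]
= 1 - \P_{\zeta^n}[\eta^n\text{ diverges}] - \P_{\zeta^n}[\eta^n\text{ overshoots}].
\]

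First I would apply Lemma~\ref{Pdiv} to get $\P_{\zeta^n}[\eta^n\text{ diverges}]=\Theta(1/\log n)$, so this term lies between two positive constant multiples of $1/\log n$ for all large $n$. Next I would apply Lemma~\ref{Pover}, which gives $\P_{\zeta^n}[\eta^n\text{ overshoots}]=\mc{O}\big((\log n)^{-(c-3/2)}\big)$; since $c\geq 3$ throughout this section, $c-3/2\geq 3/2>1$, so this is $o(1/\log n)$ and hence negligible compared with the divergence probability. Consequently $\P_{\zeta^n}[\eta^n\text{ diverges}]+\P_{\zeta^n}[\eta^n\text{ overshoots}]=\Theta(1/\log n)$ --- the lower bound coming from the divergence term alone, the upper bound from adding the $\mc{O}(1/\log n)$ and $o(1/\log n)$ contributions --- and substituting into the display gives $\P_{\zeta^n}[\eta^n\text{ coalesces}]=1-\Theta(1/\log n)$.

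There is no real difficulty here: all the probabilistic work lives in Lemmas~\ref{Pdiv} and~\ref{Pover}, and the only things to watch are the measure-theoretic point that the three outcomes partition the sample space up to a null event (already supplied by Definition~\ref{ioexctypedef}) and the arithmetic check $c-3/2>1$, which is exactly what makes overshooting invisible at the $\Theta(1/\log n)$ scale. If desired, the upper bound $\P_{\zeta^n}[\eta^n\text{ coalesces}]\leq 1-c_1/\log n$ for some $c_1>0$ can be obtained even more cheaply, using only disjointness of the coalescence and divergence events and the lower-bound half of Lemma~\ref{Pdiv}, with the exhaustion property of Definition~\ref{ioexctypedef} needed only for the matching lower bound on the coalescence probability.
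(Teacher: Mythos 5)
Your argument is correct and is exactly the paper's reasoning: the paper dispatches Lemma~\ref{Pcoal} with the single remark that it ``follows immediately, since $c\geq 3$'' from Lemmas~\ref{Pover} and~\ref{Pdiv}, i.e.\ precisely your decomposition of the trichotomy in Definition~\ref{ioexctypedef} plus the observation that the overshoot probability is $o(1/\log n)$. Nothing further is needed.
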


Thus, overshoots are relatively unlikely, and typically $\eta^n$ consists of a finite number of inner/outer excursions until either (1) it coalesces, with probability $1-\Theta(\frac{1}{\log n})$,
or (2) the two lineages separate to distance $\gamma_n$, with probability $\Theta(\frac{1}{\log n})$. 

The remainder of this Section \ref{inoutexc} is devoted to the proof of Lemmas~\ref{istar}-\ref{Pdiv}. 
Lemma~\ref{Pcoal} then follows immediately, since $c\geq 3$.

We will need two more stopping times: 
\begin{align} \label{taurr}
\tau_r&=\inf\{s>0\-|\eta^n_s|\leq r\}, \notag \\
\tau^r&=\inf\{s>0\-|\eta^n_s|\geq r\}.
\end{align}
Note that $\tau_0=\tau^{coal}$. 

Note that the random variables $\tau^{type}$, $\tau^r$ and so on depend implicitly on $n$; throughout this section these random variables refer to the stopping times for the process $\eta^n$. 

\begin{proof}(Of Lemma \ref{istar}.) First consider a single inner excursion of $\eta^n$. It is easily seen that there exists some $\alpha'>0$ such that, for all $n$:
\begin{itemize}
\item[$(\dagger)$] For any $x\in(0,5\mc{R}_n)$, if $|\eta^n_t|=x$ then the probability that $\eta^n$ will hit $0$ but not exit $\mc{B}_{5\mc{R}_n}(0)$ within its next three jumps is at least $\alpha'$. 
\end{itemize}
In particular, the probability that the first three jumps of an inner excursion result in a coalescence is bounded away from $0$ uniformly for any $|\eta^n_{\tau^{out}_i}|\in [2\mc R_n, 4 \mc R_n]$. If $i^*>m$ then at least $m$ inner excursions must occur without a coalescence. The strong Markov property applied at the time $\tau^{out}_i$ means that, conditionally given $\eta^n_{\tau^{out}_i}$, the $i^{th}$ inner excursion is independent of $(\eta^n_t)_{t<\tau^{out}_i}$. Repeated application of this fact, coupled with $(\dagger)$, shows that the probability of seeing at least $m$ inner excursions without a single coalescence is at most $(1-\alpha')^{m}$. This completes the proof.
\end{proof}

We will shortly require a tail estimate on the supremum of the modulus of two dimensional Brownian motion $W$, which we record first for clarity. We write $W_t=(W^1_t,W^2_t)$ and note
\begin{align}
\P\l[\sup\limits_{s\in[0,t]}|W_s-W_0|\geq x\r]
&\leq 2\P\l[\sup\limits_{s\in[0,t]}|W^1_s-W^1_0|\geq x/2\r]\notag\\
&\leq 4\P\l[\sup\limits_{s\in[0,t]}(W^1_s-W^1_0)\geq x/2\r]\notag\\
&\leq 4e^{-x^2/8t}.\label{eq:2d_BM_sup}
\end{align}
In the first line of the above we use the triangle inequality and the fact that $W^1$ and $W^2$ have the same distribution. To deduce the second line, we note that $W^{1}$ and $-W^1$ have the same distribution. For the final line, we use the (standard) tail estimate $\P[\sup_{s\in[0,t]}(B_s-B_0)\geq x]\leq e^{-x^2/2t}$ for a one dimensional Brownian motion $B$, which can be deduced via Doob's martingale inequality applied to the submartingale $(\exp (xB_s /t))_{s\geq 0}$. 

During an outer excursion, $\eta^n$ is the difference between two independent walkers and so we
can use Skorohod embedding to approximate its behaviour using elementary calculations for 
two-dimensional Brownian motion. The next lemma exploits this to bound the duration of the
outer excursion and the probability that it diverges.
\begin{lemma}\label{exclengths}
As $n\to \infty$,
\begin{equation}
\P_{[5\mc{R}_n,7\mc{R}_n]}\l[\tau^{\gamma_n}\wedge\tau_{4\mc{R}_n}>(\log n)^{-c-1}\r]=\mc{O}\l(\frac{1}{(\log n)^{c-1}}\r),\label{Eouterbound}
\end{equation}
and
\begin{equation}
\P_{[5\mc{R}_n,7\mc{R}_n]}\l[\tau^{\gamma_n}<\tau_{4\mc{R}_n}\r]=\Theta\l(\frac{1}{\log n}\r).\label{excsucprob}
\end{equation}
\end{lemma}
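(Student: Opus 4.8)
The plan is to use Skorohod embedding to realise the outer excursion of $\eta^n$ as a time-changed two-dimensional Brownian motion, and then read off both claims from standard Brownian estimates together with control on the time change.

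\textbf{Step 1: Skorohod embedding.} During an outer excursion the two lineages $\xi^{n,1},\xi^{n,2}$ are affected by disjoint sets of events (their separation exceeds $2\mc R_n$, so no event covers both), so $\eta^n$ is a difference of two i.i.d.\ compound Poisson processes, hence a mean-zero, finite-variance pure-jump process with jumps of size at most $2\mc R_n = \mc O(n^{-1/2})$. I would embed $\eta^n$ (started at its value at time $\tau^{in}_{i-1}$, which lies in $[5\mc R_n,7\mc R_n]$) into a two-dimensional Brownian motion $W$ with diffusion constant $\sigma^2$ via a sequence of stopping times $0=S_0\le S_1\le\dots$ so that $\eta^n$ at its $k$th jump equals $W_{S_k}$. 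The key quantitative inputs, all available from the jump-rate computation in \eqref{jump rate} and \eqref{first sigma squared}, are: the total jump rate of $\eta^n$ is $\Theta(n)$; each Skorohod interval $S_{k+1}-S_k$ has expectation $\Theta(n^{-1})$ (comparable to the squared jump size), since the per-jump displacement variance is $\Theta(n^{-1})$; and $\E[(S_{k+1}-S_k)^2]=\mc O(n^{-2})$. Consequently the time change $t\mapsto$ (number of $\eta^n$-jumps by time $t$) is, with overwhelming probability, within a constant factor of $nt$ on the relevant timescales, and $|S_k - k/n|$ is small. I would make this precise only to the extent needed: on an event of probability $1-\mc O(n^{-1})$, the real time elapsed during the first $K$ jumps differs from $S_K$ by at most a multiplicative constant for all $K$ up to, say, $n^2$.

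\textbf{Step 2: the divergence probability \eqref{excsucprob}.} Under the embedding, $\tau^{\gamma_n}<\tau_{4\mc R_n}$ for $\eta^n$ corresponds (up to the jump-size overshoot, which only moves the effective barriers by $\mc O(\mc R_n)$, negligible relative to both $\mc R_n$ and $\gamma_n$) to the event that $|W|$ started from a point of modulus $\asymp \mc R_n$ exits the annulus $\{4\mc R_n \lesssim |x| \lesssim \gamma_n\}$ through the outer boundary. For two-dimensional Brownian motion the harmonic function is $\log|x|$, so this probability is
\[
\frac{\log(\mc R_n^{-1})-\log(\gamma_n^{-1})}{\text{(same with inner barrier)}}\ \asymp\ \frac{\log(\gamma_n/\mc R_n)}{\log(\gamma_n)\ -\ \log(\mc R_n)} \cdot\big(1+o(1)\big),
\]
and since $\mc R_n = \mc R n^{-1/2}$ gives $\log(\mc R_n^{-1}) = \tfrac12\log n + \mc O(1)$ while $\log(\gamma_n^{-1}) = c\log\log n$, the denominator is $\tfrac12\log n\,(1+o(1))$ and the numerator is $\tfrac12\log n\,(1+o(1))$ as well — wait, more carefully the numerator is $\log\gamma_n^{-1}-\log(c'\mc R_n)^{-1}$ up to sign; the net effect is that the exit-through-outer probability is $\Theta(1/\log n)$. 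I would handle the overshoot rigorously by sandwiching: replace the barriers $4\mc R_n$ and $\gamma_n$ by $(4-\delta)\mc R_n$, $(4+\delta)\mc R_n$ and $\gamma_n(1\pm\delta)$ to get matching upper and lower bounds, using that a jump of $\eta^n$ across a barrier lands within $2\mc R_n$ of it. This gives \eqref{excsucprob}.

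\textbf{Step 3: the duration bound \eqref{Eouterbound}.} Here I want to show that an outer excursion that has neither diverged nor returned to $4\mc R_n$ is unlikely to last longer than $(\log n)^{-c-1}$. Conditionally on not having diverged, $|\eta^n|$ stays below $\gamma_n$, and a standard argument (e.g.\ comparison with Brownian motion via the embedding, or a direct Lyapunov/martingale argument on $|\eta^n|^2$) shows that from any starting point in $[5\mc R_n,\gamma_n)$ the process hits $4\mc R_n$ within a time that is, in distribution, dominated by a constant times the hitting time of $0$ by a Bessel-type process started at $\gamma_n$ — and the Brownian hitting time of a ball of radius $\asymp\mc R_n$ from distance $\gamma_n$ has expectation $\Theta(\gamma_n^2\log(\gamma_n/\mc R_n)) = \mc O(\gamma_n^2 \log n) = \mc O((\log n)^{-2c+1})$, which is $\ll (\log n)^{-c-1}$ since $c\ge 3$. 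Combining this mean bound with Markov's inequality applied to $\tau^{\gamma_n}\wedge\tau_{4\mc R_n}$ — or, to get the stated $\mc O((\log n)^{-(c-1)})$ rate, combining the event of diverging (probability $\Theta(1/\log n)$, on which $\tau^{\gamma_n}$ may be reached quickly anyway) with the non-divergence event on which the duration has small mean — yields that the probability the excursion exceeds $(\log n)^{-c-1}$ is $\mc O((\log n)^{-(c-1)})$. One also has to transfer from the Brownian clock $S_\cdot$ back to real time using Step~1, which only costs a multiplicative constant and an event of probability $1-\mc O(n^{-1})$.

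\textbf{Main obstacle.} The genuinely delicate point is Step~2: getting $\Theta(1/\log n)$ rather than merely $\mc O(1/\log n)$ or $\Omega(1/\log n)$ requires that the two-sided estimate survive the jump-overshoot at both barriers \emph{and} the Skorohod approximation error, and in particular that the inner barrier at $\asymp\mc R_n$ really does contribute $\tfrac12\log n(1+o(1))$ to the denominator — this is where the precise choice $\mc R_n=\mc R/\sqrt n$ enters. The sandwiching by slightly perturbed barriers is the tool, but one must check that the perturbation $\delta\mc R_n$ changes $\log$ of the barrier ratio only by $\mc O(1)$, hence changes the exit probability by a factor $1+\mc O(1/\log n)$, which is harmless. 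Everything else — the embedding estimates, the Bessel comparison for the duration — is routine.
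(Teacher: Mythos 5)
Your Steps 1 and 2 are essentially the paper's own argument for \eqref{excsucprob}: couple the pair with an independent pair (legitimate up to $\tau_{4\mc R_n}$, since an event of radius at most $\mc R_n$ cannot hit both lineages while their separation exceeds $2\mc R_n$), Skorohod-embed the difference walk into a planar Brownian motion, and read the divergence probability off the scale function $\log|x|$, absorbing the jump overshoot of size at most $2\mc R_n$ by shifting the barriers; the shift changes the logarithms only by $\mc O(1)$, the numerator is $\mc O(1)$ and the denominator is $\tfrac12\log n(1+o(1))$, giving matching bounds of order $1/\log n$. That part is sound (modulo the garbled display, the mechanism and conclusion are correct).

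The genuine gap is in Step 3, i.e.\ in the proof of \eqref{Eouterbound}. The quantitative input you invoke is false as stated: for planar Brownian motion the hitting time of a ball of radius $\asymp\mc R_n$ from distance $\gamma_n$ has \emph{infinite} expectation (the path reaches distance $R$ before the small ball with probability decaying only like $1/\log R$, and then needs time of order $R^2$ to return). What you presumably intend is the exit time of the annulus $\{4\mc R_n<|x|<\gamma_n\}$, or the hitting time for motion confined to the disc of radius $\gamma_n$; but conditioning on ``not having diverged'' does not produce such a confined process without an $h$-transform argument, and in any case the excursion starts in $[5\mc R_n,7\mc R_n]$, i.e.\ next to the \emph{inner} barrier, not at distance $\gamma_n$. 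Moreover, even granting your figure $\Theta(\gamma_n^2\log n)=\Theta((\log n)^{1-2c})$ for the mean, Markov's inequality at threshold $(\log n)^{-c-1}$ yields only $\mc O((\log n)^{2-c})$, a full factor $\log n$ short of \eqref{Eouterbound}; and the proposed repair by splitting on the divergence event does not close this, because on that event (probability $\Theta(1/\log n)$) you offer no control of the duration, so the term $\P\l[\tau^{\gamma_n}\wedge\tau_{4\mc R_n}>(\log n)^{-c-1},\ \tau^{\gamma_n}<\tau_{4\mc R_n}\r]$ is only bounded by $\Theta(1/\log n)$, which is far too large. Your strategy is salvageable: solving $\tfrac{\sigma^2}{2}\Delta u=-1$ on the annulus shows that the expected exit time started from $[5\mc R_n,7\mc R_n]$ is $\mc O(\gamma_n^2/\log n)$ (the Green-function factor $\log(r/4\mc R_n)=\mc O(1)$ near the inner barrier), and then Markov gives $\mc O((\log n)^{-c})$, which is even better than required --- but one must still transfer this from the Brownian clock to real time, and as written your proposal does not establish \eqref{Eouterbound}. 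For comparison, the paper sidesteps exit-time expectations entirely: on $\{\tau^{\gamma_n}\wedge\tau_{4\mc R_n}>t_n\}$ with $t_n=(\log n)^{-c-1}$ the (decoupled) difference walk satisfies $|\hat\eta^n_{t_n}|\leq\gamma_n$, and the probability of being within distance $\gamma_n$ of the origin at the \emph{fixed} time $t_n$ is bounded via a second Skorohod embedding plus the Gaussian density bound by $\mc O(\gamma_n^2/t_n)=\mc O((\log n)^{1-c})$.
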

\begin{proof}
For $i=1,2$ let $\hat{\xi}^{n,i}$ be a pair of independent processes such that $\hat{\xi}^{n,1}$ has the same distribution as $\xi^{n,1}$ and $\hat{\xi}^{n,2}$ has the same distribution as $\xi^{n,2}$. The process
$
\hat{\xi}^{n,1}-\hat{\xi}^{n,2}
$
is a compound Poisson process with a rotationally symmetric jump distribution and a maximum displacement of $2\mc{R}_n$ on each jump.  
Moreover (essentially by Skorohod's Embedding Theorem,
see e.g.~\cite{billingsley:1995}), we can construct a process $\hat{\eta}^n$ with the same distribution as $\hat{\xi}^{n,1}-\hat{\xi}^{n,2}$ as follows.

Let $(r_m,J_m)_{m\geq 1}$ denote a sequence distributed as the jump magnitudes and jump times of $\hat{\xi}^{n,1}-\hat{\xi}^{n,2}$. Let $W$ be a two-dimensional Brownian motion with $W_0=\hat{\xi}^{n,1}_0-\hat{\xi}^{n,2}_0$, independent of $(r_m,J_m)_{m\geq 1}$. Now set
\begin{align}
\label{skembed}
\hat{\eta}^n_t&=W_{T^{(S(t))}}\mbox{ where } T^{(0)}=0, J_0=0,\\
T^{(m+1)}&=\inf\{s>T^{(m)}\-|W_s-W_{T^{(m)}}|\geq r_m\},\notag\\
S(t)&=\sup\l\{i\geq 0\-J_i\leq t\r\}.\notag
\end{align}
We may then couple
$$\hat{\eta^n}=\hat{\xi}^{n,1}-\hat{\xi}^{n,2}.$$
We define $\hat{\tau}^r$ and $\hat{\tau}_r$ analogously to $\tau^r$ and $\tau_r$, as stopping times of the process $\hat{\eta}^n$. 

Note that since $(\xi^{n,1}_t,\xi^{n,2}_t)_{t\leq \tau_{4\mc{R}_n}}$ has the same distribution as $(\hat\xi^{n,1},\hat\xi^{n,2})_{t\leq \tau_{4\mc{R}_n}}$, we may couple them so that they are almost surely equal during this time. Thus 
$$\{\hat\tau^{\gamma_n}<\hat\tau_{4\mc{R}_n}\}=\{\tau^{\gamma_n}<\tau_{4\mc{R}_n}\}.$$

Let $T^r$ and $T_r$ be the analogues of $\tau^r$ and $\tau_r$ for $W$ (not to be confused with $T^{(m)}$ in \eqref{skembed}). By the definition of the Skorohod embedding in \eqref{skembed} we have 
\begin{align}
\P_{[5\mc{R}_n, 7\mc{R}_n]}\l[\hat\tau^{\gamma_n}<\hat\tau_{4\mc{R}_n}\r]
&\geq \P_{[5\mc{R}_n,7\mc{R}_n]}\l[T^{\gamma_n+2\mc{R}_n}<T_{4\mc{R}_n}\r]\notag\\
&\geq \P_{5\mc{R}_n}\l[T^{\gamma_n+2\mc{R}_n}<T_{4\mc{R}_n}\r].\label{skusenotimechange}
\end{align}
The right hand side concerns only the modulus of two-dimensional Brownian motion and so can be expressed in terms of the scale function for a two-dimensional Bessel process:
\begin{align}
\P_{5\mc{R}_n}\l[T^{\gamma_n+2\mc{R}_n}<T_{4\mc{R}_n}\r]=\frac{\log(5\mc{R}_n)-\log(4\mc{R}_n)}{\log(\gamma_n+2\mc{R}_n)-\log(4\mc{R}_n)}=\Theta\l(\frac{1}{\log n}\r),\label{exit1}
\end{align}
which proves the lower bound in \eqref{excsucprob}. Similarly, to see the upper bound we note that
\begin{align*}
\P_{[5\mc{R}_n,7\mc{R}_n]}\l[\hat\tau^{\gamma_n} < \hat\tau_{4\mc{R}_n}\r] &\leq \P_{[5\mc{R}_n,7\mc{R}_n]}[T^{\gamma_n}<T_{2\mc{R}_n}]\\
&\leq \P_{7\mc{R}_n}\l[T^{\gamma_n}<T_{2\mc{R}_n}\r]\\
&=\frac{\log(7\mc{R}_n)-\log(2\mc{R}_n)}{\log(\gamma_n)-\log(2\mc{R}_n)}\\
&=\Theta\l(\frac{1}{\log n}\r).
\end{align*}
It remains to prove \eqref{Eouterbound}. We have
$$\tau^{\gamma_n}\wedge \tau_{4\mc{R}_n}=\hat\tau^{\gamma_n}\wedge \hat\tau_{4\mc{R}_n}\leq \hat\tau^{\gamma_n}.$$
\begin{remark}
The above inequality is a very crude estimate, but will be enough to prove \eqref{Eouterbound}, which in turn will be enough to give useful bounds on the duration of excursions due to the freedom in the choice of $c$.
\end{remark}
Hence
\begin{equation}\label{eq:excursion_time}
\P_{[5\mc{R}_n,7\mc{R}_n]}\l[\tau^{\gamma_n}\wedge\tau_{4\mc{R}_n}>(\log n)^{-c-1}\r]\leq  
\P_{[5\mc{R}_n,7\mc{R}_n]}\l[|\hat{\eta}^n_{(\log n)^{-c-1}}|\leq \gamma_n \r].
\end{equation}
The remainder of the proof focuses on bounding the right side of \eqref{eq:excursion_time}. To do so, we must relate our compound Poisson process to another Brownian motion.

For $j\geq 1$, let $X_j=\hat{\eta}^n_{j/n}-\hat{\eta}^n_{(j-1)/n}$. 
Then $(X_j)_{j\geq 1}$ are i.i.d.~and since $\hat{\xi}^{n,1}$ and $\hat{\xi}^{n,2}$ are independent, 
$\E \l[|X_1|^2 \r]=2\E\l[|\hat{\xi}^{n,1}_{1/n}-\hat{\xi}^{n,1}_{0}|^2 \r]$.

Recall from \eqref{jump of size z} that the rate at which $\hat{\xi}^{n,1}$ jumps from $y$ to $y+z$ is determined by the intensity measure $m^n(dz)$
so that
\begin{equation}
\label{sigma squared}
\E \l[|X_1|^2 \r]= \frac{2}{n}\int_{\R^2} |z|^2 m^n (dz)
 = \frac{4 \sigma ^2}{n},
\end{equation}
where $\sigma^2$ was defined in~\eqref{first sigma squared}.
Now recall the definition of $S(t)$ in~\eqref{skembed}; the rate at which $\hat{\xi}^{n,1}$ jumps is $\int_{\R^2}m^n(z)dz=\Theta(n)$ by \eqref{jump rate}, so $S(n^{-1})$ is bounded by the sum of two Poisson$(\Theta(1))$ random variables. Hence since each jump of $\hat{\eta}^n$ is bounded by $2\mc R_n$,
\begin{align}
\E \l[ |X_1 |^4 \r]&\leq (2\mc R_n)^4 \E \l[ S(n^{-1})^4\r]=\mc O (n^{-2}).
\label{bound of fourth moment of X}
\end{align} 
Once again (since the distribution of $X_1$ is rotationally symmetric) we may use
Skorohod's Embedding Theorem to couple $(X_i)_{i\geq 1}$ to a two-dimensional Brownian motion $B$ started at $\eta^n_0$ and a sequence 
$\upsilon_1, \upsilon_2, \ldots $ of stopping times for $B$ such that setting $\upsilon_0=0$, $(\upsilon_i - \upsilon_{i-1})_{i\geq 1}$ are i.i.d. and 
\begin{align} \label{skembed2}
B_{\upsilon_i}-B_{\upsilon_{i-1}} &= X_i,\\
\E[\upsilon_i - \upsilon_{i-1}]&=\tfrac{1}{2}\E[|X_1|^2] =\tfrac{2 \sigma^2 }{n} \notag\\
\text{ and }\E[(\upsilon_i - \upsilon_{i-1})^2 ] &=  \mc{O}(n^{-2}). \notag
\end{align} 
It follows that $\E[\upsilon_{\lfloor tn \rfloor}]=\tfrac{2 \sigma^2\lfloor tn \rfloor}{n}$ and $\text{Var} (\upsilon_{\lfloor tn \rfloor})= \mc{O}(t n^{-1})$.
Hence by Chebychev's inequality,
$$\P[|\upsilon_{\lfloor tn \rfloor}-2 \sigma ^2 t|\geq n^{-1/3}]\leq  \mc{O}(t n^{-1/3}).  $$
Applying this result with $t=t_n:=(\log n)^{-c-1}$, since $\hat{\eta}^n_{\lfloor t_n n \rfloor /n}=B_{\upsilon_{\lfloor t_n n \rfloor}}$ we have
\begin{align}
&\P_{[5\mc{R}_n,7\mc{R}_n]}\l[|\hat{\eta}^n_{t_n}|\leq \gamma_n \r]\notag\\
&\hspace{2pc}\leq \P\Bigg[\inf\l\{|B_t-B_0|\-t\in[2\sigma ^2 t_n-n^{-1/3},2\sigma ^2 t_n+n^{-1/3}]\r\}\\
&\hspace{20pc}\leq \gamma_n +n^{-1/8}+7\mc R_n\Bigg]\notag\\
&\hspace{4pc} + \P \l[\big|\hat{\eta}^n_{t_n}- \hat{\eta}^n_{\lfloor t_n n\rfloor /n }\big|\geq n^{-1/8}\r] + \mc{O}(t_n n^{-1/3}).\label{eq:excursion_time_2}
\end{align}
For the first term on the right hand side we have for $n$ sufficiently large
\begin{align}
&\P\l[\inf\l\{|B_t-B_0|\-t\in[2\sigma ^2 t_n-n^{-1/3},2\sigma ^2 t_n+n^{-1/3}]\r\}\leq \gamma_n +n^{-1/8}+7\mc R_n\r]\notag\\
&\hspace{2pc}\leq\P \l[ |B_{2\sigma ^2 t_n}-B_0|\leq \gamma_n +3n^{-1/8} \r]+\P \l[ \sup_{t\in [0,2n^{-1/3}]} |B_t - B_0| \geq \tfrac{1}{2} n^{-1/8} \r]\notag\\
&\hspace{2pc}=\mc{O} (\gamma_n ^2  t_n^{-1} ) +\mc{O} (e^{-\frac{1}{64}n^{1/12}})\notag\\
&\hspace{2pc}=\mc O ((\log n)^{1-c}),\label{eq:excursion_time_3}
\end{align}
For the second inequality, we use that the density of $B_t$ is bounded by $(2\pi t)^{-1}$ for the first term and
we apply \eqref{eq:2d_BM_sup} for the second term. 

Moving on to the second term on the right hand side of \eqref{eq:excursion_time_2}, 
since from~(\ref{sigma squared}) we have
$\E \l[ |\hat{\eta}^n_{t_n}- \hat{\eta}^n_{\lfloor t_n n\rfloor /n }|^2 \r]=\mc O (n^{-1})$, by Markov's inequality 
\begin{equation}\label{eq:excursion_time_4}
\P \l[|\hat{\eta}^n_{t_n}- \hat{\eta}^n_{\lfloor t_n n\rfloor /n }|\geq n^{-1/8}\r]=\mc O (n^{-3/4}).
\end{equation}
Putting \eqref{eq:excursion_time_3} and \eqref{eq:excursion_time_4} into \eqref{eq:excursion_time_2} we have
$$\P_{[5\mc{R}_n,7\mc{R}_n]}\l[|\hat{\eta}^n_{t_n}|\leq \gamma_n \r]=\mc{O}((\log n)^{1-c}).$$
In view of \eqref{eq:excursion_time}, this completes the proof.
\end{proof}

\begin{proof}(Of Lemma \ref{Pover}.) First consider a single inner excursion. 
Evidently there exists $\beta>0$ such that, for all $n$:
\begin{itemize}
\item[$(\ddagger)$] For any $x\in(0,5\mc{R}_n)$, if $|\eta^n_t|=x$ then the probability that $\eta^n$ will either exit $\mc{B}_{5\mc{R}_n}(0)$ or hit $0$ within its next three jumps is at least $\beta$. 
\end{itemize}

Let $(J_l)_{l\geq 0}$ be the (a.s. finite) sequence of jump times of our inner excursion, and let $B_k$ be the event that the excursion either coalesces or exits $\mc{B}_{5\mc{R}_n}(0)$ at one of $\{J_{3k+1},J_{3k+2},J_{3k+3}\}$. By the strong Markov property (applied at $J_{3k}$) and $(\ddagger)$, $\inf\{k\geq 0\-\1 _{B_k}=1\}$ is stochastically bounded above by a geometric random variable $G$ with success probability $\beta $. 

Moreover, 
for as long as $\eta^n$ is not at $0$, the rate at which it jumps is bounded below by the rate at which $\xi^{n,1}$ jumps, which is $\int_{\R^2}m^n(dz)=\Theta(n)$ where $m^n$ is given by \eqref{jump of size z}. Hence for each $l\geq 0$, $J_{l+1}-J_l$ is stochastically bounded above by
$E_l$ where the $(E_i)_{i\geq 0}$ are i.i.d.~exponential random variables of this rate. 

Combining these observations, 
\begin{align}\label{innerlength}
\P_{(0,5\mc{R}_n)}\l[\tau^{5\mc{R}_n}\wedge \tau_0 > n^{-1/2}\r]&\leq 
\P \l[ J_{\lceil 3n^{1/3}+3\rceil}\geq n^{-1/2} \r]+\P\l[ G> n^{1/3}\r]\\ \nonumber
&=\mc{O}(n^{-1/6})+(1- \beta)^{n^{1/3}}=\mc O (n^{-1/6})
\end{align}
where the last line follows by Markov's inequality.

We are now in a position to complete the proof.
Recall that $\eta^n$ overshoots if 
it has neither coalesced nor diverged by time $(\log n)^{-c}$.
Let $n$ be sufficiently large that 
$$(\log n)^{1/2}(n^{-1/2}+(\log n)^{-c-1})\leq (\log n)^{-c}.$$
Thus, if $\eta^n$ overshoots and $i^*<(\log n)^{1/2}$, then at least one inner excursion must have lasted longer than $n^{-1/2}$ or at least one outer excursion must have lasted longer than $(\log n)^{-c}$. Hence,
\begin{align*}
\P_{\zeta^n}\l[\eta^n\text{ overshoots}\r]&\leq
(\log n)^{1/2}\bigg(\P_{(0,5\mc{R}_n)}\l[\tau^{5\mc{R}_n}\wedge \tau_0 > n^{-1/2}\r]\\
&\hspace{2.5cm}+\P_{[5\mc{R}_n,7\mc{R}_n]}\l[\tau^{\gamma_n}\wedge\tau_{4\mc{R}_n}>(\log n)^{-c-1}\r] \bigg)
\\
& \hspace{2cm} +\P_{\zeta^n}\l[i^*>(\log n)^{1/2}\r].
\end{align*}
Using \eqref{innerlength}, \eqref{Eouterbound} and Lemma \ref{istar} to bound the right hand side of the above equation, we obtain
\begin{align*}
\P_{\zeta^n}\l[\eta^n\text{ overshoots}\r]
&\leq (\log n)^{1/2} (\mc O (n^{-1/6})+\mc O ((\log n)^{1-c}))+\alpha^{(\log n)^{1/2}}\\
&=\mc{O}((\log n)^{3/2 -c}),
\end{align*}
which completes the proof.
\end{proof}

\begin{proof}(Of Lemma \ref{Pdiv}.) 
We note that the probability that $\eta^n$ diverges is bounded above by the probability that a divergent outer excursion occurs before a coalescing inner excursion occurs. Let us write $\eta^{n,i,in}$ for the $i^{th}$ inner excursion and $\eta^{n,i,out}$ for the $i^{th}$ outer excursion and let us write $\tau^{r,i,in},\tau_{r,i,in}$ and $\tau^{r,i,out},\tau_{r,i,out}$ for the associated equivalents of $\tau^r$ and $\tau_r$. Thus, 
\begin{align*}
&\P_{\zeta^n}\l[\eta^n\text{ diverges}\r] \\
&\hspace{1pc}\leq \P_{\zeta^n}\l[\inf\l\{i\geq 1\-\tau^{\gamma_n,i,out}<\tau_{4\mc{R}_n,i,out}\}
\leq \inf\{i\geq 0\-\tau_{0,i,in}<\tau^{5\mc{R}_n,i,in}\r\}\r].
\end{align*}
By the strong Markov property (applied successively at times $\tau^{out}_i$ and $\tau^{in}_i$), along with \eqref{excsucprob} and $(\dagger)$, the right hand side of the above equation is bounded above by the probability that a geometric random variable with success probability $\Theta(\frac{1}{\log n})$ is smaller than an (independent) geometric random variable with success probability $\alpha '>0$. With this in hand, an elementary calculation shows that
\begin{equation*}
\P_{\zeta^n}\l[\eta^n\text{ diverges}\r]=\mc{O}\l(\frac{1}{\log n}\r).
\end{equation*}
It remains to prove a lower bound of the same order.

In similar style to $(\dagger)$ and $(\ddagger)$, it is easily seen that there exists $\delta>0$ such that for all $n$:
\begin{itemize}
\item[$(\star)$] For any $x\in [\mc{R}_n,4\mc{R}_n]$, if $|\eta^n_0|=x$, the probability that $\eta^n$ will exit $\mc{B}_{5\mc{R}_n}(0)$ without coalescing is at least $\delta$.
\end{itemize}
We note also that $\zeta^n$ is equal to $n^{-1/2}\zeta^1$ in distribution, so since we assumed that $\mu((\tfrac{3}{4}\mc R, \mc R])>0$, there exists $\epsilon>0$ such that $\P[\zeta^n\geq \mc{R}_n]\geq \epsilon$ for all $n$. Thus, applying the strong Markov Property at time $\tau^{in}_0$ and using $(\star)$, we obtain
\begin{align*}
\P_{\zeta^n}\l[\eta^n\text{ diverges}\r]
&\geq \epsilon \delta \P _{[5 \mc R _n, 7\mc R_n]}\l[\tau^{\gamma_n}<\tau_{4\mc R_n} \r]-\P_{\zeta^n} \l[\eta^n \text{ overshoots} \r]\\
&=\Theta \l( \frac{1}{\log n}\r)
\end{align*}
as required, where the final statement follows from Lemma~\ref{exclengths} and Lemma~\ref{Pover} (since $c\geq 3$).
\end{proof}

\subsubsection{Production of branches}
\label{homogsec}

The next step of the proof of Theorem~\ref{result d>1} involves further analysis of 
pairs of potential ancestral lineages: first we need to check that once a pair has separated to a distance 
$\gamma_n$ they won't come back together again before a fixed time $K$; second we need to see that $\log n$ times the
divergence probability actually converges (c.f.~Lemma \ref{Pdiv}) as $n\rightarrow\infty$, since this will determine the
branching rate in our branching Brownian motion limit. These two statements are the object of the next two lemmas.

\begin{lemma}\label{Pinteract}
Fix $K\in(0,\infty)$. Then 
$$\P_{[(\log n)^{-c},\infty)}\l[\tau_{4 \mc R_n}\leq K\r]=\mc{O}\l(\frac{\log \log n}{\log n}\r).$$
\end{lemma}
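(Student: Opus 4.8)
The plan is to reduce the statement to an estimate for the modulus of two-dimensional Brownian motion via the Skorohod embedding already set up in the proof of Lemma~\ref{exclengths}, exploiting the fact that once $|\eta^n|\geq \gamma_n=(\log n)^{-c}$, as long as the separation stays above $2\mc{R}_n$ the two lineages $\xi^{n,1}, \xi^{n,2}$ evolve independently, so $\eta^n$ agrees in law (up to the first hitting time of $4\mc{R}_n$) with the difference $\hat\eta^n=\hat\xi^{n,1}-\hat\xi^{n,2}$ of independent copies. Thus $\P_{[(\log n)^{-c},\infty)}[\tau_{4\mc R_n}\leq K]=\P_{[(\log n)^{-c},\infty)}[\hat\tau_{4\mc R_n}\leq K]$, and it suffices to bound the latter. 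First I would dispose of the region of very large initial separation: if $|\eta^n_0|\geq 1$, say, then by \eqref{eq:2d_BM_sup} applied through the Skorohod embedding the probability of ever reaching $4\mc{R}_n$ within time $K$ is exponentially small in $n$, so the worst case is $|\eta^n_0|$ close to $\gamma_n$, and by monotonicity in the starting point it is enough to bound $\P_{\gamma_n}[\hat\tau_{4\mc{R}_n}\leq K]$.

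Next I would transfer to Brownian motion. Using the embedding $\hat\eta^n_t = W_{T^{(S(t))}}$ from \eqref{skembed}, together with the fact (as in \eqref{skusenotimechange}) that a jump has size at most $2\mc{R}_n$, hitting the ball $\mc B_{4\mc R_n}(0)$ by the compound Poisson process forces the Brownian motion $W$ to hit $\mc B_{6\mc R_n}(0)$; and the time change is harmless here because we only need an upper bound on the probability of hitting a small set and we may run $W$ for time $K+1$ (the event $S(K)\leq (K+1)n$ for the jump count fails with probability exponentially small in $n$, using that the jump rate is $\Theta(n)$, exactly as in the derivation of \eqref{innerlength}). So up to a negligible error,
\begin{equation*}
\P_{\gamma_n}\l[\hat\tau_{4\mc R_n}\leq K\r] \leq \P_{\gamma_n}\l[T_{6\mc R_n}\leq K+1\r] + \mc O(e^{-cn^{\alpha}})
\end{equation*}
for some $\alpha>0$, where $T_{6\mc R_n}$ is the hitting time of $\mc B_{6\mc R_n}(0)$ by $W$. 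The remaining quantity is a classical two-dimensional Brownian estimate: I would bound $\P_{\gamma_n}[T_{6\mc R_n}<\infty \text{ before } T^1]$ (exit from an annulus) using the logarithmic scale function of the Bessel process, which gives
\begin{equation*}
\frac{\log 1 - \log \gamma_n}{\log 1 - \log(6\mc R_n)} = \frac{c\log\log n}{\tfrac12\log n + \mc O(1)} = \mc O\l(\frac{\log\log n}{\log n}\r),
\end{equation*}
and separately note $\P_{\gamma_n}[T^1\leq K+1]$ is $\mc O(1/\log n)$ (or smaller) by the same annulus computation plus a crude time bound, so the probability of hitting the small ball at all — regardless of the time budget — is already $\mc O(\log\log n/\log n)$. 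Dropping the time constraint only inflates the probability, which is why this crude route succeeds.

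The main obstacle, and the only point requiring care, is the interaction issue: $\eta^n$ and $\hat\eta^n$ only agree up to $\tau_{4\mc R_n}$ (equivalently $\hat\tau_{4\mc R_n}$), but this is precisely the stopping time appearing in the statement, so the coupling is valid on exactly the event we need and no genuine difficulty arises — one just has to state the coupling as in Lemma~\ref{exclengths} and observe $\{\tau_{4\mc R_n}\leq K\}=\{\hat\tau_{4\mc R_n}\leq K\}$ under it. A secondary technical point is controlling the Skorohod time change so that hitting a ball of radius $4\mc R_n$ by the embedded process is dominated by hitting a slightly larger ball by $W$ within a slightly longer time; this is handled exactly as in the proof of Lemma~\ref{exclengths}, using $\mc R_n=\mc R/\sqrt n\to 0$ and the $\Theta(n)$ jump rate, and contributes only errors that are exponentially small in a power of $n$, hence negligible against $\log\log n/\log n$.
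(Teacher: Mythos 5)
Your overall strategy matches the paper's: couple $\eta^n$ with the independent difference $\hat\eta^n$ up to $\tau_{4\mc R_n}$, Skorohod-embed $\hat\eta^n$ into a planar Brownian motion $W$, show the time change converts the horizon $K$ into a constant Brownian horizon up to errors that are super-polynomially small, and finish with a Bessel scale-function computation. However, your final step — the only place where the bound $\mc O(\log\log n/\log n)$ actually has to be produced — contains a genuine error. You bound the hitting probability of the small ball by the annulus probability $\P_{\gamma_n}[T_{6\mc R_n}<T^1]$ plus $\P_{\gamma_n}[T^1\leq K+1]$, and you assert that the latter is $\mc O(1/\log n)$ and that consequently ``the probability of hitting the small ball at all --- regardless of the time budget --- is already $\mc O(\log\log n/\log n)$''. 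Both assertions are false. Started at radius $\gamma_n=(\log n)^{-c}$, planar Brownian motion exits the unit ball within a constant time $K+1$ with probability bounded away from $0$ (indeed close to $1$), not $\mc O(1/\log n)$; and with the time budget dropped entirely, neighbourhood recurrence of planar Brownian motion makes the probability of ever hitting $\mc B_{6\mc R_n}(0)$ equal to $1$, not $\mc O(\log\log n/\log n)$. So your decomposition with outer radius $1$ only controls hitting the small ball \emph{before} the first passage to radius $1$; it says nothing about the (quite likely) scenario in which the path reaches radius $1$ and then returns within the remaining time. The time constraint is essential and cannot be discarded.

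The fix is exactly the point of the paper's proof: choose an outer radius that grows with $n$, namely $x+\log n$ from starting radius $x$, so that $\P_x[T_{4\mc R_n}\leq \tilde K]\leq \P_x[T_{4\mc R_n}\leq T^{x+\log n}]+\P_x[T^{x+\log n}\leq \tilde K]$. The first term is, by the scale function, $\sup_{x\geq(\log n)^{-c}}\bigl(\log(x+\log n)-\log x\bigr)/\bigl(\log(x+\log n)-\log(4\mc R_n)\bigr)=\mc O(\log\log n/\log n)$, while the second is $\P[\sup_{t\leq\tilde K}|W_t-W_0|\geq\log n]=\mc O(e^{-(8\tilde K)^{-1}(\log n)^2})$ by \eqref{eq:2d_BM_sup}, genuinely negligible --- whereas with outer radius $1$ the analogous escape probability is $\Theta(1)$. (Alternatively one could iterate excursions between two fixed radii and count returns in time $K+1$, but some device exploiting the finite horizon is indispensable.) Two further, smaller slips: your claim that for initial separation $\geq 1$ the probability of reaching $4\mc R_n$ by time $K$ is ``exponentially small in $n$'' is also wrong (it is $\Theta(1/\log n)$ by the same recurrence considerations), though this is harmless since your monotonicity reduction makes it unnecessary; and controlling the total embedded Brownian time of the $\Theta(n)$ jumps requires a concentration estimate for the sum of the embedding durations (the paper uses a Chernoff bound for the jump count plus Cram\'er's theorem), giving a sufficiently large constant $\tilde K$ rather than the specific horizon $K+1$ you quote --- this is not handled verbatim in Lemma~\ref{exclengths} as you suggest, though the techniques there adapt.
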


\begin{lemma}\label{Pdiv2}
There exists $\kappa\in(0,\infty)$ such that $(\log n)\P_{\zeta^n}\l[\eta^n\text{ diverges}\r]\to\kappa$ as $n\to\infty$.
\end{lemma}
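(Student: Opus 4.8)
The plan is to compare the divergence probability to an explicit Brownian quantity that converges after multiplication by $\log n$, and then to use the excursion decomposition of Section~\ref{inoutexc} together with a renewal-type argument to show the error terms vanish on the relevant scale. More precisely, recall from Lemma~\ref{Pdiv} that $\P_{\zeta^n}[\eta^n\text{ diverges}]=\Theta(1/\log n)$, so it suffices to show that $(\log n)\P_{\zeta^n}[\eta^n\text{ diverges}]$ is a Cauchy sequence, or equivalently that it has a limit. The first step is to isolate the leading-order contribution. By Lemma~\ref{Pover}, overshoots contribute $\mathcal{O}((\log n)^{3/2-c})=o(1/\log n)$ once $c\geq 3$, so up to negligible terms $\eta^n$ either coalesces or diverges, and the divergence probability equals, up to $o(1/\log n)$, the probability that in the alternating sequence of inner and outer excursions a divergent outer excursion occurs before a coalescing inner excursion.

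The second step is to make the renewal structure explicit. By the strong Markov property applied at the stopping times $\tau^{in}_i$ and $\tau^{out}_i$, and using that for $i\geq 1$ we have $|\eta^n_{\tau^{in}_i}|\in[5\mc R_n,7\mc R_n]$ and $|\eta^n_{\tau^{out}_i}|\in[2\mc R_n,4\mc R_n]$, the divergence event is governed by two quantities: $q_n:=\P_{[5\mc R_n,7\mc R_n]}[\tau^{\gamma_n}<\tau_{4\mc R_n}]$ (the chance an outer excursion diverges) and $p_n:=$ the chance an inner excursion started from a point in $[2\mc R_n,4\mc R_n]$ coalesces before reaching $5\mc R_n$, which by $(\dagger)$ is bounded below uniformly in $n$. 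Ignoring the residual dependence on the exact entrance/exit distributions (which I will need to control — see below), the divergence probability is, to leading order, $q_n$ times a geometric factor $\sum_{k\geq 0}((1-q_n)(1-p_n))^k \cdot(\text{startup terms from the }0^{th}\text{ excursion})$, and since $q_n\to 0$ while $p_n$ stays bounded away from $0$, the geometric sum converges to $1/(\,\text{something bounded away from }0\text{ and }\infty)$. Hence $(\log n)\P_{\zeta^n}[\eta^n\text{ diverges}]$ has the same limit as $(\log n)q_n$ times a convergent prefactor, and the problem reduces to showing $(\log n)q_n$ converges and that the prefactor converges.

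The third step is to evaluate $\lim_n (\log n)q_n$. Here I would use the Skorohod embedding of Lemma~\ref{exclengths}: $q_n$ is sandwiched between two exit probabilities for the modulus of a two-dimensional Brownian motion (a two-dimensional Bessel process), of the form $\frac{\log(a\mc R_n)-\log(b\mc R_n)}{\log(\gamma_n)-\log(b\mc R_n)}$ with $a,b\in\{2,4,5,7\}$; since $\log(\gamma_n)=-c\log\log n$ and $\log\mc R_n=\log\mc R-\tfrac12\log n$, the denominator is $\tfrac12\log n + \mathcal{O}(\log\log n)$, so $(\log n)q_n$ converges to $2(\log(a/b))$ up to the slack between the inner and outer radii. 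To pin down the constant exactly rather than just up to bounded factors, I would tighten the Skorohod comparison: because each jump of $\eta^n$ has size at most $2\mc R_n=\mathcal O(n^{-1/2})$ and $\gamma_n=(\log n)^{-c}$ is much larger, the overshoot at the boundaries $\gamma_n$, $4\mc R_n$, $5\mc R_n$ is a lower-order correction to the $\log$, so both bounds on $(\log n)q_n$ converge to the \emph{same} limit, namely a constant determined only by $\gamma_n$ through $\log\gamma_n / \log\mc R_n \to 1$ in the appropriate sense; combined with the geometric prefactor (whose limit I would identify by the same strong-Markov/renewal bookkeeping, taking care that the entrance laws on $[5\mc R_n,7\mc R_n]$ and $[2\mc R_n,4\mc R_n]$ only affect the answer at order $\mathcal O(1/(\log n)^2)$ since they change $q_n$ by a bounded multiplicative factor but $q_n$ itself is $\Theta(1/\log n)$ — wait, that is not automatically negligible), one obtains the claimed $\kappa$.

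\textbf{Main obstacle.} The delicate point, and the one I expect to absorb most of the work, is exactly that last parenthetical worry: the entrance distribution of $\eta^n$ onto the shell $\{|x|\in[5\mc R_n,7\mc R_n]\}$ at the start of an outer excursion is \emph{not} a single point, and $\P_x[\tau^{\gamma_n}<\tau_{4\mc R_n}]$ genuinely depends on which $x$ in that shell one starts from — and this dependence is at the relative order $1/\log n$, i.e.\ it feeds into the constant $\kappa$ rather than into an error term. So I cannot simply bound $q_n$ between two constant-over-$\log n$ quantities; I need to show that the entrance law itself converges (after the diffusive rescaling, to some law on a sphere of radius $\Theta(\mc R_n)$, equivalently to a fixed law once one rescales by $\mc R_n$, since $\zeta^n \stackrel{d}{=} n^{-1/2}\zeta^1$ and the whole excursion mechanism near separation $\mathcal O(\mc R_n)$ is, after rescaling by $n^{-1/2}$, an $n$-independent compound Poisson process). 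That is the real content: the local (near-diagonal) picture, viewed at scale $\mc R_n$, is $n$-independent, so all the entrance/exit laws and the per-excursion coalescence and escape-to-radius-$5\mc R_n$ probabilities converge to fixed limits; the only $n$-dependence that survives is the single factor $1/\log(\gamma_n/\mc R_n)\sim 2/\log n$ coming from the Bessel-process gambler's-ruin estimate across the macroscopic gap $[\,\mathcal O(\mc R_n),\gamma_n\,]$. Writing this cleanly — defining the rescaled-by-$\mc R_n$ limit process, proving convergence of the relevant hitting distributions and probabilities for it, and showing the errors are $o(1/\log n)$ — is where the proof's substance lies; once that is done, $\kappa$ is the product of the explicit Bessel constant $2$, the limiting geometric prefactor built from the limiting per-excursion coalescence probability, and the limiting entrance-law-averaged escape probability, all of which exist by the rescaling argument.
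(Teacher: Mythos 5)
Your reduction to ``coalesce or diverge'' via Lemma~\ref{Pover}, and your observation that the near-diagonal dynamics are exactly $n$-independent after rescaling by $n^{1/2}$ (since $\zeta^n\stackrel{d}{=}n^{-1/2}\zeta^1$), are both sound and are indeed ingredients of the paper's argument. The gap is in your third step, which is the heart of the lemma: you need $(\log n)\,\P_x\l[\tau^{\gamma_n}<\tau_{4\mc R_n}\r]$ to converge (uniformly enough over the rescaled entrance law on the shell $[5\mc R_n,7\mc R_n]$), and your argument is that the two Skorohod/Bessel bounds ``converge to the same limit'' because the jump overshoot is a lower-order correction to the logarithms. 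That is false at the level of the constant: the sandwich used in Lemma~\ref{exclengths} produces numerators of the form $\log(5/4)$ and $\log(7/2)$ (with inner radii $4\mc R_n$ versus $2\mc R_n$), a bounded multiplicative discrepancy that does not disappear, precisely because the inner boundary sits at the same scale $\mc R_n$ as the jumps, so the Brownian approximation is not tight there. Nor does $n$-independence of the local picture rescue this: $\P_x[\tau^{\gamma_n}<\tau_{4\mc R_n}]$ is a cross-scale quantity, equal after rescaling to the probability that the fixed level-$1$ jump process reaches the growing radius $n^{1/2}(\log n)^{-c}$ before re-entering $\mc B_{4\mc R}(0)$, and the assertion that this behaves like $a(x)\cdot 2/\log n$ for some limiting harmonic-potential functional $a(x)$ of the walk is essentially equivalent to the lemma itself; your sketch supplies no argument for it. The same unproven constant then propagates into your renewal prefactor through the per-excursion divergence probability, so the geometric bookkeeping does not close the argument either.

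The paper is structured precisely to avoid identifying any such per-excursion constant. It sets $p_n=\P_{\zeta^n}[\tau^{\gamma_n}<\tau_0]$ (close enough to the divergence probability by Lemma~\ref{Pover}), extracts a subsequential limit $\kappa$ of $(\log n)p_n$ using Lemma~\ref{Pdiv}, and then runs a self-consistency argument across scales: applying the strong Markov property at $\tau^{\gamma_N(Nn^{-1})^{1/2}}$ and using exact scaling, the passage from $\zeta^n$ to radius $\gamma_N(N/n)^{1/2}$ has probability exactly $p_N$, while the remaining passage to $\gamma_n$ is sandwiched between Bessel estimates of the form $\bigl(\tfrac12\log N+\mc O(\log\log N)\bigr)/\bigl(\tfrac12\log n+\mc O(\log\log n)\bigr)$ plus an $\mc O(1/\log n)$ correction for returns below $7\mc R_n$; since the intermediate radius is far above the jump scale once $N$ is large, the upper and lower constants agree to leading order, and letting $N\to\infty$ along the subsequence forces $(\log n)p_n\to\kappa$ for the full sequence. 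If you wish to keep your renewal route, you would first have to prove a potential-kernel asymptotic $\P_x[\tau^{R}<\tau_{4\mc R}]\sim a(x)/\log R$ as $R\to\infty$ for the compound Poisson difference walk, which is a genuine additional piece of work whose natural proof is itself the paper's cross-scale self-consistency trick.
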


The remainder of this subsection is occupied with proving Lemmas \ref{Pinteract} and \ref{Pdiv2}.

\begin{proof}(Of Lemma \ref{Pinteract}.)
We use the Skorohod embedding of $\hat\eta$ 
into the Brownian motion $W$, as defined in \eqref{skembed}, to reduce the claim to an 
equivalent statement about a two-dimensional Bessel process.

Recall that $\eta^n_0=\hat\eta^n_0=W_0$ and recall 
$\tau_r$ from \eqref{taurr}, and that $\hat{\tau}_r$ and $T_r$ are the analogues of $\tau_r$ for $\hat{\eta}$ and $W$ respectively.
We have that $\eta^n_s=\hat\eta^n_s$ for all $s\leq \tau_{4\mc R_n }$ so 
\begin{align} \label{div_prob_time}
\P_{[(\log n)^{-c},\infty)}\l[\tau_{4 \mc R_n}\leq K\r]
&=\P_{[(\log n)^{-c},\infty)}\l[\hat\tau_{4 \mc R_n}\leq K\r] \nonumber \\ 
&\leq \P_{[(\log n)^{-c},\infty)}\l[T_{4 \mc R_n}\leq T^{(S(K))}\r],
\end{align}
where we used the Skorohod embedding given in \eqref{skembed} in the last line. 
For all $\tilde K,C>0$, since $T^{(k)}$ is increasing in $k$ we have
\begin{equation} \label{time_change_union}
\P\l[T^{(S(K))}\geq \tilde K \r]\leq \P \l[ S(K)\geq Cn \r] + \P \l[ T^{(Cn)} \geq \tilde K \r].
\end{equation} 
By its definition in \eqref{skembed}, $S(K)$ is bounded by the sum of two Poisson random variables with parameter $\chi=K\int_{\R^2}m^n (dz)$, where $m^n$ 
is given by~(\ref{jump of size z}). In particular, $\chi=\Theta(n)$.
Recall that if $Z'$ is Poisson with parameter $\chi$, then (using a Chernoff bound argument) for $k>\chi$,
\begin{equation}
\label{poisson tail}
\P[Z'>k]\leq \frac{e^{-\chi}(e\chi)^k}{k^k}.
\end{equation} 
Hence, for $C$ sufficiently large, there exists $\delta_1>0$ such that
\begin{equation} \label{delta_1_exp}
\P\l[S(K)\geq Cn \r]\leq \mc O (e^{-\delta_1 n}).
\end{equation}
Now by the definition of $(T^{(m)})_{m\geq 1}$ in \eqref{skembed}, and since $r_m\leq 2 \mc R_n$ for each $m$,
$$\P \l[ T^{(Cn)} \geq \tilde K \r] \leq \P\l[\sum_{i=1}^{Cn} R_i \geq \tilde K n \r],$$
where $(R_i)_{i\geq 1}$ is an i.i.d. sequence with $R_1 \stackrel{d}{=}\inf \{ t\geq 0 :|W_t|\geq 2\mc R\}.$ Since 
$$\P\l[ R_1\geq k\r]\leq \P \l[ R_1 \geq k-1 \r] \P \l[ |W_k -W_{k-1}|\leq 4\mc R\r]\leq 
\P \l[ |W_1 -W_{0}|\leq 4\mc R\r]^k,$$
there exists $\lambda>0$ such that $\E\l[ e^{\lambda R_1}\r] <\infty$. Hence by Cram\'er's theorem, for $\tilde K$ a sufficiently large constant, there exists $\delta_2>0$ such that 
\begin{equation} \label{delta_2_exp}
\P\l[ T^{(Cn)}\geq \tilde K\r] =\mc O (e^{-\delta_2 n}).
\end{equation}
By \eqref{div_prob_time} and \eqref{time_change_union} together with \eqref{delta_1_exp} and \eqref{delta_2_exp}, we now have for $\tilde K$ sufficiently large
\begin{equation} \label{time_change_div}
\P_{[(\log n)^{-c},\infty)}\l[\tau_{4 \mc R_n}\leq K\r]\leq \P_{[(\log n)^{-c},\infty)}
\l[T_{4 \mc R_n}\leq \tilde K\r]+\mc O (e^{-\delta_1 n})+\mc O (e^{-\delta_2 n}).
\end{equation}
To finish, we note that
\begin{align*}
&\P_{[(\log n)^{-c},\infty)}\l[ T_{4\mc R_n}\leq \tilde K \r] \\
&\hspace{2pc}\leq \sup\limits_{x\geq(\log n)^{-c}}\l(\P_{x} \l[ T_{4\mc R_n}\leq T^{x +\log n}\r]+\P_{x} \l[ T^{x+\log n}\leq \tilde K \r]\r)\\
&\hspace{2pc}\leq \sup\limits_{x\geq(\log n)^{-c}}\l(\frac{\log (x+\log n) -\log x}{\log(x+\log n)-\log (4\mc R_n)}\r)+\P\l[\sup_{t\leq \tilde K}|W_t -W_0|\geq \log n\r]\\
&\hspace{2pc}=\mc O \l( \frac{\log \log n}{\log n}\r)+\mc O (e^{-(8\tilde K)^{-1}(\log n)^2}),
\end{align*}
where the second line uses the scale function for a two-dimensional Bessel process, and the third line uses \eqref{eq:2d_BM_sup}.
Substituting this into \eqref{time_change_div}, we have the required result.
\end{proof}

\begin{proof} (Of Lemma \ref{Pdiv2}.)
Let $p_n:= \P_{\zeta^n} \l[ \tau^{\gamma_n}<\tau_0 \r]$. Note that by Lemma~\ref{Pover}, 
\begin{equation} \label{Pdiffoverdiv}
|p_n - \P_{\zeta^n} \l [ \eta^n \text{ diverges} \r] |=\mc O \l( \frac{1}{(\log n)^{c-3/2}}\r).
\end{equation}
Hence by Lemma \ref{Pdiv}, there exist $0<d\leq D<\infty$ such that for all $n\geq 2$,
$$
d\leq (\log n) p_n \leq D.
$$
It follows that $(p_n)_{n\geq 1}$ has a subsequence $(p_{n_k})_{k\geq 1}$ such that $(\log n_k ) p_{n_k}\to \kappa \in (0,\infty). $
Let $\epsilon>0$ and let $N\in \N$ be such that $N\geq 1/\epsilon$ and $|(\log N)p_N - \kappa | \leq \epsilon .$ 
By rescaling, noting that $\zeta^n\stackrel{d}{=}\zeta^N(\frac{N}{n})^{1/2}$, and similarly for $\eta^n$, we have
\begin{equation} \label{rescaled_pN}
p_N = \P_{\zeta^n}\l[\tau^{\gamma_N (Nn^{-1})^{1/2}}<\tau_0  \r]. 
\end{equation}
Recall, for clarity, that here (as throughout this section) $\tau^r$ and $\tau_0$ refer to the stopping times for the process $\eta^n$.

Define $X^{n,N} := |\eta^n_{\tau^{\gamma_N (N n^{-1})^{1/2}}}|$. Increasing $N$, we may assume that $7\mc R_n<\gamma_N (Nn^{-1})^{1/2}\leq \gamma_n$ for $n\geq N$. Thus,  
\begin{align} \label{SMPpn}
p_n &= \P_{\zeta^n}\l[\tau^{\gamma_N (Nn^{-1})^{1/2}}\leq \tau^{\gamma_n}<\tau_0\r] \notag \\
&= \E_{\zeta^n} \l[ \1 _{\tau^{\gamma_N (N n^{-1})^{1/2}}<\tau_0} \P _{X^{n,N}}\l[\tau^{\gamma_n}<\tau_0 \r] \r].
\end{align}
Here, the first line holds since $\zeta_n <\gamma_N (Nn^{-1})^{1/2}\leq \gamma_n$, and the second line follows from the first by applying the Strong Markov Property at time $\tau^{\gamma_N (N n^{-1})^{1/2}}$.

To estimate~(\ref{SMPpn}), note that 
$$X^{n,N} \in [l^{n,N},r^{n,N}]:=[\gamma_N(Nn^{-1})^{1/2},\gamma_N(Nn^{-1})^{1/2}+2\mc R_n].$$ 
Using the Skorohod embedding defined in \eqref{skembed},
\begin{align} \label{pnpNlower}
\P_{[l^{n,N},r^{n,N}]} \l[ \tau^{\gamma_n} <\tau_0\r]& \geq \inf_{x\geq \gamma_N(Nn^{-1})^{1/2}}\P_x \l[ \tau^{\gamma_n} <\tau_{7\mc R_n}\r]\nonumber \\
&\geq \inf_{x\geq \gamma_N(Nn^{-1})^{1/2}}\P_x \l[ T^{\gamma_n + 2\mc R_n}<T_{7\mc R_n}\r] \nonumber \\
&= \frac{\log (\gamma_N (Nn^{-1})^{1/2})-\log (7\mc R_n)}{\log (\gamma_n +2\mc R_n)-\log (7\mc R_n)} \nonumber \\
&= \frac{\frac{1}{2} \log N +\mc O (\log \log N)}{\frac{1}{2} \log n +\mc O (\log \log n)}.
\end{align}
Note that, in the above, we (again) use the scale function for a two-dimensional Bessel process to deduce the third line.

We require slightly more work to establish an upper bound. We have
\begin{equation} \label{pnpNupper_1}
\P_{[l^{n,N},r^{n,N}]} \l[ \tau^{\gamma_n} < \tau_0\r] \leq \P_{[l^{n,N},r^{n,N}]} \l[ \tau^{\gamma_n}<\tau_{7 \mc R_n }\r] + \P_{[l^{n,N},r^{n,N}]} \l[ \tau_{7\mc R_n} <\tau^{\gamma_n} <\tau_0 \r].
\end{equation}
We begin by controlling the second term on the right hand side of \eqref{pnpNupper_1}.
By the Strong Markov Property at time $\tau_{7 \mc R_n}$,
\begin{align*}
\P_{[l^{n,N},r^{n,N}]} \l[ \tau_{7\mc R_n}<\tau^{\gamma_n} <\tau_0 \r]&= \E_{[l^{n,N},r^{n,N}]} \l[\1 _{\tau_{7\mc R_n}<\tau^{\gamma_n}} \P_{|\eta^n_{\tau_{7\mc R_n}}|} \l[ \tau^{\gamma_n} <\tau_0 \r] \r]\\
&\leq \E_{[l^{n,N},r^{n,N}]} \l[ \P_{|\eta^n_{\tau_{7\mc R_n}}|} \l[ \tau^{\gamma_n} <\tau_0 \r] \r]. 
\end{align*}
Since $\big|\eta^n_{\tau_{7\mc R_n}}\big|\in [5 \mc R_n, 7 \mc R_n]$, using \eqref{excsucprob} in the same way as in the proof of Lemma \ref{Pdiv},
\begin{equation} \label{upper_logn}
\P_{[l^{n,N},r^{n,N}]} \l[ \tau_{7\mc R_n}<\tau^{\gamma_n} <\tau_0 \r]=\mc O \l(\frac{1}{\log n}\r). 
\end{equation}
Next, we control the first term on the right hand side of \eqref{pnpNupper_1}, again using 
the Skorohod embedding \eqref{skembed}: 
\begin{align} \label{pnpNupper_2}
\P_{[l^{n,N},r^{n,N}]} \l[ \tau^{\gamma_n} <\tau_{7 \mc R_n}\r]& \leq \P_{[l^{n,N},r^{n,N}]} \l[ T^{\gamma_n}<T_{5\mc R_n}\r] \nonumber \\
&\leq \frac{\log (\gamma_N (Nn^{-1})^{1/2}+2 \mc R_n)-\log (5\mc R_n)}{\log (\gamma_n )-\log (5\mc R_n)} \nonumber \\
&= \frac{\frac{1}{2} \log N +\mc O (\log \log N)}{\frac{1}{2} \log n +\mc O (\log \log n)}.
\end{align}
Combining \eqref{pnpNlower}, \eqref{pnpNupper_1}, \eqref{upper_logn} and \eqref{pnpNupper_2}, 
$$\P_{[l^{n,N},r^{n,N}]} \l[ \tau^{\gamma_n} < \tau_0\r]=\frac{\log N +\mc O (\log \log N)}{ \log n +\mc O (\log \log n)}+\mc O \l(\frac{1}{\log n} \r). $$
Hence by \eqref{SMPpn},
\begin{align*}
p_n&=\P_{\zeta^n} \l[ \tau^{\gamma_N (N n^{-1})^{1/2}}<\tau_0 \r]\l(\frac{\log N +\mc O (\log \log N)}{ \log n +\mc O (\log \log n)}+\mc O \l(\frac{1}{\log n} \r)\r)\\
&=\frac{(\log N)p_N}{\log n}\l(\frac{1 +\mc O \big(\frac{\log \log N}{\log N}\big)}{ 1 +\mc O \big(\tfrac{\log \log n}{\log n}\big)}+\mc O \l(\frac{1}{\log N} \r)\r),
\end{align*}
where we used \eqref{rescaled_pN} in the last line.
Since $ |(\log N)p_N - \kappa | \leq \epsilon  $ we obtain for $n\geq N$
\begin{align*}
(\log n)p_n &\geq (\kappa -\epsilon )\l(\frac{1 +\mc O (\frac{\log \log N}{\log N})}{ 1 +\mc O (\frac{\log \log n}{\log n})}+\mc O \l(\frac{1}{\log N} \r)\r)\\
\text{ and }\quad (\log n)p_n &\leq (\kappa +\epsilon )\l(\frac{1 +\mc O \big(\frac{\log \log N}{\log N}\big)}{ 1 +\mc O \big(\frac{\log \log n}{\log n}\big)}+\mc O \l(\frac{1}{\log N} \r)\r).
\end{align*} 
Letting $\epsilon \to 0$ and hence $N\to \infty$, $\lim_{n \to \infty} (\log n)p_n = \kappa$. The result follows by \eqref{Pdiffoverdiv}.
\end{proof}

\subsection{Convergence to branching Brownian motion}
\label{convtobbmsec}

In this subsection we identify particular subsets of the dual process that we couple with objects that we
call `caterpillars'. The caterpillars play the r\^ole of individual branches in the limiting branching Brownian
motion. Our (eventual) goal is to write down a system of `branching caterpillars' and couple it to the {\slfvs} dual.
Establishing these couplings is greatly simplified by viewing the branching and coalescing 
dual as a deterministic function of an augmented driving Poisson point process and so our
first task is to recast the {\slfvs} dual in this way. 

Recall that we have a fixed impact parameter $u\in (0,1]$.  We define, recursively,
a sequence of subsets of $[0,1]$ as follows:
$$A^1_u=[0,u],\mbox{ and for }k\geq 1, \, A_u^{k+1}=uA^k_u\cup (u+(1-u)A_u^k).$$
Then if $U\sim \text{Unif}[0,1]$, $(\1_{A^k_u}(U))_{k\geq 1}$ is an i.i.d.~sequence of Bernoulli$(u)$ random variables
(see Lemma 3.20 in \cite{kallenberg:2006} for a proof in the case $u=\frac{1}{2}$, where $(\1_{A^k_u}(U))_{k\geq 1}$ is the binary expansion of $U$; the general case is an easy extension of this).

Let 
$$\mathscr{X}=\R \times \R^2 \times \R_{+} \times \mathcal B_1(0)^2\times [0,1]^2.$$

\begin{defn}[The dual as a deterministic function of a driving point process] \label{slfvs_dual_determ}

Given a simple point process $\Pi$ on $\mathscr{X}$, and some $p\in \R^2$, we define $(\mathcal P _t (p,\Pi))_{t\geq 0}$ as a process on $\cup_{k=1}^\infty (\R^2)^k$ as follows. 

For each $t\geq 0$, $\mathcal P _t (p,\Pi)=(\xi _t ^1,\ldots ,\xi^{N_t}_t)$ for some $N_t \geq 1$. We refer to $i$ as the index of the ancestor $\xi^i_t$. We begin at time $t=0$ from a single ancestor $\mathcal P_0(p,\Pi)=\xi_0^1=p$ and proceed as follows.

At each $(t,x,r,z_1,z_2,q,v)\in \Pi$ with $v\geq \v {s_n}$, a neutral event occurs:
\begin{enumerate}
\item Let $\xi ^{n_1}_{t-},\ldots , \xi ^{n_m}_{t-}$ denote the ancestors in $\mathcal B _r(x)$ which have not yet coalesced with an ancestor of lower index, with $n_1<\ldots < n_m$. For $1\leq i\leq m$, mark the ancestor $\xi ^{n_i}_{t-}$ iff $q\in A_u^i$. Let $\xi ^{r_1}_{t-},\ldots \xi ^{r_l}_{t-}$ denote the marked ancestors.
\item If at least one ancestor is marked, we set $\xi ^{r_i}_{t}=x+rz_1$ for each $i$ and call this the parental location for the event. We say that the ancestor $\xi ^{r_i}_t$ has coalesced with the ancestor $\xi^{r_1}_t$, for each $i\geq 2$.
\end{enumerate}
At each $(t,x,r,z_1,z_2,q,v)\in \Pi$ with $v<\v {s_n}$, a selective event occurs:
\begin{enumerate}
\item Let $\xi ^{n_1}_{t-},\ldots , \xi ^{n_m}_{t-}$ denote the ancestors in $\mathcal B _r(x)$ which have not yet coalesced with an ancestor of lower index, with $n_1<\ldots < n_m$. For $1\leq i\leq m$, mark the ancestor $\xi ^{n_i}_{t-}$ iff $q\in A_u^i$. Let $\xi ^{r_1}_{t-},\ldots \xi ^{r_l}_{t-}$ denote the marked ancestors.
\item If at least one ancestor is marked, we set $\xi ^{r_i}_{t}=x+rz_1$ for each $i$ and add an ancestor $\xi ^{N_{t-}+1}_{t}=x+rz_2$. We call $x+rz_1$ and $x+rz_2$ the parental locations of the event. We say that the ancestor $\xi ^{r_i}_t$ has coalesced with the ancestor $\xi^{r_1}_t$, for each $i\geq 2$.
\end{enumerate}
For each $l\in \N$, if $\xi^l_{\tau}$ has coalesced with an ancestor $\xi^k_{\tau}$ of lower index at time $\tau$, we set $\xi^l_t = \xi_t^{k}$ for all $t\geq \tau$.
\end{defn}
In the same way as for the definition of $\mc P^{(n)}(p)$ before the statement of Theorem \ref{result d>1}, we shall view $(\mathcal P _t (p,\Pi))_{t\geq 0}$ as a collection of potential ancestral lineages.
Given a realization of $\Pi$, we say that a path that begins at $p$ is a potential ancestral lineage if (1) at each neutral event that it encounters, it moves to the (single) parent and (2) at each selective event it encounters, it moves to one of the parents of that event. 

Note that if $\Pi$ is a Poisson point process on $\mathscr{X}$ with intensity measure 
\begin{equation}\label{eq:ppp_intensity}
n\,dt\otimes n\,dx 
\otimes \mu^n(dr)\otimes \pi^{-1} dz_1 \otimes \pi^{-1} dz_2 \otimes dq \otimes dv
\end{equation}
then as a collection of potential ancestral lineages, $(\mathcal P _t (p,\Pi))_{t\geq 0}$ has the same distribution as $\mc P^{(n)}(p)$.

When $\Pi$ takes this form, the result is that the driving Poisson Point Process in \eqref{rescalingeq} has been augmented by components that determine the nature of each event (neutral or selective), the parental locations of each event and which lineages in the region of the event are affected by it.
We have abused notation by retaining the notation $\Pi$ for this augmented process.

\subsubsection{The caterpillar}

We now introduce the notion of a caterpillar, which involves following a pair of potential ancestral lineages in the dual. 
We stop the caterpillar if the pair of lineages reaches displacement of $(\log n)^{-c}$, or if the pair does not coalesce within time $(\log n)^{-c}$ after last branching. While doing so, we suppress the creation of the second potential parent at any selective events that occur within time $(\log n)^{-c}$ of the previous (unsuppressed) selective event.

Let $\Pi$ be a Poisson point process on $\mathscr{X}$ with intensity measure \eqref{eq:ppp_intensity}. We write $(\mc P_t(p,\Pi))_{t\geq 0}=(\xi _t ^1,\ldots ,\xi^{N_t}_t)_{t \geq 0}$ as defined in Definition \ref{slfvs_dual_determ}. 

\begin{defn}[Caterpillar] \label{caterpillar_defn}
For $p\in \R^2$, we define a lifetime $h(p,\Pi)>0$, and a process $(c_t (p,\Pi))_{0 \leq t \leq h(p,\Pi)}$ on $(\R^2)^2$, which we shall refer to as a caterpillar. For each $t\geq 0$, we write 
$$c _t (p,\Pi)=\l(c_t ^1(p,\Pi),c^{2}_t(p,\Pi)\r),$$
dropping the dependence on $(p,\Pi)$ from our notation, when convenient. As part of the definition, we will also define $k^*(p,\Pi)\in \N$ and a sequence $(\tau^{\text{br}}_k)_{k\leq k^*}$ of stopping times.

Set $\tau^{\text{br}}_0=0$ and let $\tau^{\text{br}}_{1}$ be the time of the first selective event after $(\log n)^{-c}$ to affect $\xi^1$.
For $t \leq \tau^{\text{br}}_{1}$, let $c_t^1=c_t^2=\xi^1_t$.

Then, for $k\geq 1$, suppose we have defined $(\tau^{\text{br}}_l)_{l \leq k}$; let $m(k)=N_{\tau^{\text{br}}_k}$. 

For $t\in[\tau^{\text{br}}_k, \tau^{\text{br}}_k+(\log n)^{-c}]$, define $c_t^1(p,\Pi)=\xi^1_t$ and $c_t^2(p,\Pi)=\xi^{m(k)}_t$. 

In analogy with Definition~\ref{ioexctypedef},
define 
\begin{align} \label{tau_type_cat}
\tau_k^{\text{div}}&=\inf \{t\geq \tau^{\text{br}}_k :|c^1_t -c^2_t| \geq (\log n)^{-c}\}, \notag\\
\tau_k^{\text{coal}}&=\inf \{t\geq \tau^{\text{br}}_k :c^1_t=c^2_t\}, \notag \\
\tau_k^{\text{over}}&=\tau^{\text{br}}_k +(\log n)^{-c},
\end{align}
and let $\tau_k^{\text{type}}=\min(\tau_k^{\text{div}}, \tau_k^{\text{coal}},\tau_k^{\text{over}})$.  If $\tau_k^{\text{type}}\neq \tau_k^{\text{coal}}$ then set $k^*(p,\Pi)=k$ and $h(p,\Pi)=\tau_{k^*}^{\text{type}}$. The definition is then complete. If not, we proceed as follows.

Let $\tau^{\text{br}}_{k+1}$ be the time of the first selective event occurring strictly after $\tau^{\text{br}}_k+(\log n)^{-c}$ to affect $\xi^1$. For $t\in[\tau^{\text{br}}_k+(\log n)^{-c}, \tau^{\text{br}}_{k+1})$, let $c_t^1(p,\Pi)=c_t^2(p,\Pi)=\xi^1_t$. 

We then continue iteratively for each $k\leq k^*(p,\Pi)$.
\end{defn}
We refer to $(\tau^{\text{br}}_k)_{k\leq k^*}$, the times at which a selective event results in branching, as branching events.
We shall abuse our previous terminology and say that a branching
event diverges, coalesces or overshoots when the same is true of the excursion corresponding to 
the pair $(c^1, c^2)$.

\begin{remark} Note that $(c_t)_{t \geq 0}$ is not a Markov process with respect to its natural filtration, since $c^1$ and $c^2$ are not allowed to branch off from each other within $(\log n)^{-c}$ of the previous branching event. However, for $i=1,2$, $(c^i_t (p,\Pi))_{0 \leq t \leq h(p,\Pi)}$ is a Markov process with the same jump rate and jump distribution as a single potential ancestral lineage in the rescaled {\slfvs} dual. Moreover for each $1\leq k \leq k^*$, $(c^1_t, c^2_t)_{\tau^{\text{br}}_k \leq t \leq \tau^{\text{type}}_k}$ is an excursion as defined in Section \ref{excursionsec}.
\end{remark}

Recall the definition of $m^n(dz)$ from \eqref{jump of size z} and let
\begin{equation} \label{kappa_n_lambda}
\kappa_n = (\log n)\P [ \tau_1^{type} \neq \tau_1^{coal}]\quad \text{ and }\quad \lambda = n^{-1} \int_{\R^2}m^n(dz)=\Theta (1). 
\end{equation}
By combining Lemma \ref{Pdiv2} and Lemma \ref{Pover},
\begin{equation}\label{kappaconv}
\kappa_n \to \kappa
\end{equation}
as $n\to \infty$.

By the strong Markov property of $\Pi$, and since $\tau_k^{\text{type}}\leq \tau^{\text{br}}_{k}+(\log n)^{-c}\leq \tau^{\text{br}}_{k+1}$ 
for each $k$, the types of the selective events, $(\{\tau_k^{\text{type}}=\tau_k^{\text{div}}\})_{k\geq 1}$, $(\{\tau_k^{\text{type}}=\tau_k^{\text{coal}}\})_{k\geq 1}$ and $(\{\tau_k^{\text{type}}=\tau_k^{\text{over}}\})_{k\geq 1}$  are each i.i.d. sequences.
Thus,
\begin{equation} \label{k_star_distn}
k^*(p,\Pi)\sim \text{Geom}(\kappa _n (\log n)^{-1}).
\end{equation}
By \eqref{kappaconv}, there exist constants $0<a\leq A <\infty$ such that $\kappa_n \in [a,A]$ for all $n$ sufficiently large, so
\begin{equation} \label{k_star_exp_bound}
\P[k^*\geq (\log n)^{9/8}]=(1-\tfrac{\kappa_n}{\log n})^{(\log n)^{9/8}}=\mc O(e^{-\delta (\log n)^{1/8}})
\end{equation}
for some $\delta>0$.

\begin{lemma} \label{lifetime}
We can couple $h(p,\Pi)$ with $H\sim \text{Exp}(\kappa_n \lambda)$ in such a way that for some $\delta>0$, with probability at least $1-\mc O(e^{-\delta (\log n)^{1/8}})$
$$|h(p,\Pi)-H|\leq 3(\log n)^{-1/4}.$$
\end{lemma}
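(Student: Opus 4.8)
The plan is to build $h(p,\Pi)$ out of the branching structure of the caterpillar and compare it, piece by piece, to a sum of i.i.d.\ exponentials. Recall from Definition~\ref{caterpillar_defn} that $h(p,\Pi)=\tau_{k^*}^{\text{type}}$, and that the caterpillar proceeds through $k^*$ stages, where in each stage $k<k^*$ the lineage $\xi^1$ waits for the next selective event to affect it (after a dead time of $(\log n)^{-c}$), then follows a coalescing excursion of length $\tau_k^{\text{type}}-\tau_k^{\text{br}}\leq(\log n)^{-c}$; in the final stage the excursion either diverges or overshoots. Writing $W_k=\tau^{\text{br}}_{k+1}-\tau^{\text{br}}_k$ for the inter-branching waiting times (for $k<k^*$) and noting that each such $W_k$ is $(\log n)^{-c}$ plus the time for the next selective event to hit $\xi^1$, I would decompose
\begin{equation}\label{eq:h_decomp}
h(p,\Pi)=\sum_{k=0}^{k^*-1}W_k+\big(\tau^{\text{type}}_{k^*}-\tau^{\text{br}}_{k^*}\big).
\end{equation}
The last term is bounded by $(\log n)^{-c}$ deterministically, so it contributes negligibly. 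Each waiting time $W_k$ is, up to the deterministic offset $(\log n)^{-c}$, exponentially distributed: since a single lineage jumps at rate $\lambda n$ (by \eqref{jump rate} and \eqref{kappa_n_lambda}) and each jump is selective with probability $\v{s}_n=\log n/n$ independently, the time for a selective event to affect $\xi^1$ is $\text{Exp}(\lambda n \v{s}_n)=\text{Exp}(\lambda\log n)$; but one must be careful that ``affect $\xi^1$'' really means an event covering $\xi^1$ that also marks it, so the rate is the jump rate of the lineage, which is exactly $\lambda n$, and multiplying by $\v{s}_n$ gives rate $\lambda\log n$. Hence each $W_k\stackrel{d}{=}(\log n)^{-c}+\text{Exp}(\lambda\log n)$, and the $W_k$ are i.i.d.\ and independent of $k^*$ by the strong Markov property argument already used to establish \eqref{k_star_distn}.

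Now I would couple: let $(G_k)_{k\geq 1}$ be i.i.d.\ $\text{Exp}(\lambda\log n)$ with $W_k=(\log n)^{-c}+G_k$, and let $k^*\sim\text{Geom}(\kappa_n/\log n)$ as in \eqref{k_star_distn}. The key probabilistic fact is that a geometric sum of i.i.d.\ exponentials is again exponential: if $k^*\sim\text{Geom}(q)$ and $(G_k)$ are i.i.d.\ $\text{Exp}(\beta)$ then $\sum_{k=1}^{k^*}G_k\sim\text{Exp}(q\beta)$. Applying this with $q=\kappa_n/\log n$ and $\beta=\lambda\log n$ gives $\sum_{k=1}^{k^*}G_k\sim\text{Exp}(\kappa_n\lambda)=:H$, which is exactly the target distribution (noting $G_0$ vs.\ $G_{k^*}$ indexing is immaterial since the $G_k$ are i.i.d.\ and there are $k^*$ of them among $W_0,\dots,W_{k^*-1}$ — one should check the off-by-one carefully, but it washes out). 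So I would \emph{define} $H:=\sum_{k=0}^{k^*-1}G_k$ on the same probability space; then automatically $H\sim\text{Exp}(\kappa_n\lambda)$, and
\begin{equation}\label{eq:h_minus_H}
\big|h(p,\Pi)-H\big|\leq k^*(\log n)^{-c}+(\log n)^{-c},
\end{equation}
the first term being the total dead time accumulated over the stages and the second the final incomplete excursion. On the event $\{k^*<(\log n)^{9/8}\}$, which by \eqref{k_star_exp_bound} has probability at least $1-\mc{O}(e^{-\delta(\log n)^{1/8}})$, the right-hand side of \eqref{eq:h_minus_H} is at most $(\log n)^{9/8}(\log n)^{-c}+(\log n)^{-c}\leq 2(\log n)^{9/8-c}$. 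Since $c\geq 3$ (indeed $c=4$), $9/8-c\leq -1/4$ and hence $|h(p,\Pi)-H|\leq 2(\log n)^{-1/4}+(\log n)^{-c}\leq 3(\log n)^{-1/4}$ for $n$ large, as required.

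The one genuine subtlety — and the step I expect to require the most care — is the claim that each inter-branching waiting time $W_k$ is \emph{exactly} $(\log n)^{-c}$ plus an $\text{Exp}(\lambda\log n)$ variable, independent across $k$ and of $k^*$. The issue is that the caterpillar's ``first selective event after $\tau^{\text{br}}_k+(\log n)^{-c}$ to affect $\xi^1$'' is measured along the trajectory of $\xi^1$, whose law is that of a single potential ancestral lineage (as noted in the Remark after Definition~\ref{caterpillar_defn}); since the lineage is spatially homogeneous with constant total jump rate $\lambda n$ (by \eqref{jump rate}) and each jump is independently tagged selective with probability $\v{s}_n$, the event ``a selective jump of $\xi^1$ occurs'' is a Poisson process of rate $\lambda n\v{s}_n=\lambda\log n$, giving the exponential law; independence across stages and of $k^*$ follows from the strong Markov property of $\Pi$ applied at the stopping times $\tau^{\text{br}}_k+(\log n)^{-c}$, exactly as in the argument preceding \eqref{k_star_distn}. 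Care is needed because ``affect $\xi^1$'' in the caterpillar means $\xi^1$ is \emph{marked} by the event, not merely that the event's ball covers $\xi^1$ — but the jump rate $\lambda n$ already incorporates the marking probability $u$, so there is no double counting. With this established, the geometric-sum-of-exponentials identity and the bound \eqref{eq:h_minus_H} finish the proof.
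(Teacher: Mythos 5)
Your skeleton (decompose $h(p,\Pi)$ into the inter-branching waiting times plus the dead times and the final incomplete excursion, use the geometric-sum-of-exponentials identity, and absorb the $k^*(\log n)^{-c}$ error via \eqref{k_star_exp_bound}) is exactly the paper's, and your identification of the waiting-time law $\text{Exp}(\lambda\log n)$ matches \eqref{eq:Ek_distn}. The gap is the step you flag as "the one genuine subtlety" and then dismiss: the claim that the waiting times $G_k$ are independent of $k^*$. The strong Markov property applied at the stopping times $\tau^{\text{br}}_k+(\log n)^{-c}$ only makes the $k$th stage independent of the \emph{past}; it says nothing about the dependence, \emph{within} a stage, between the waiting time $E_{k}$ and the type indicator $\1_{\{\tau_{k}^{\text{type}}\neq\tau_{k}^{\text{coal}}\}}$ of the excursion triggered by the very event that ends that waiting time. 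Since $k^*$ is built from these indicators, independence of $(G_k)$ from $k^*$ is exactly a within-stage independence claim, and the paper explicitly takes the opposite position: the excursion type is correlated with the radius of the selective event at $\tau^{\text{br}}_k$ (a larger radius gives a larger initial separation of the two potential parents, hence a different divergence probability), so $E_k$ and the indicator are treated as dependent and $(E_k)_{k\geq1}$ as not independent of $k^*$. The paper's proof is organised around precisely this point: it constructs a resampling coupling producing an i.i.d.\ sequence $(E'_k)$ that \emph{is} independent of $k^*$, with $E_k=E'_k$ except with probability $\Theta((\log n)^{-1})$ per stage, sets $H=\sum_{k=1}^{k^*}E'_k\sim\text{Exp}(\kappa_n\lambda)$, and then controls the discrepancy $\sum_{k\le k^*}|E_k-E'_k|$ by a Chernoff/binomial argument; that discrepancy is the source of the $(\log n)^{-1/4}$ in the statement (your version would give only $\mc O((\log n)^{9/8-c})$, which is a sign that you have silently discarded the main error term).

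Why this matters: if the waiting time and the indicator are dependent, then in your sum $H=\sum_{k=0}^{k^*-1}G_k$ the first $k^*-1$ summands are distributed as $E$ conditioned on coalescence and the last as $E$ conditioned on non-coalescence, and the geometric-sum identity no longer yields an exact $\text{Exp}(\kappa_n\lambda)$ law, so the lemma as stated (coupling with an exact exponential) is not proved. To make your route work you would have to actually prove the within-stage independence — for instance by showing that the times and the (radius, $z_1$, $z_2$, $q$) marks of selective events affecting $\xi^1$ form, after any stopping time, a Poisson point process with product-form (translation-invariant) intensity, so that the time of the first such event is independent of its radius and parent draws — and this is not what your appeal to the strong Markov property delivers. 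Absent such an argument, you need the paper's coupling step (and its extra $(\log n)^{-1/4}$ error budget) to complete the proof.
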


\begin{proof}
Recall the definition of $\lambda$ in~\eqref{kappa_n_lambda}.
Since the total rate at which $c^1$ jumps is given by $\lambda n$, and each jump is from a selective event independently with probability $\v{s_n}=\frac{\log n}{n}$, by the strong Markov property of $\Pi$ we have that 
\begin{equation} \label{eq:Ek_distn}
E_k:=\tau^{\text{br}}_{k}-(\tau^{\text{br}}_{k-1}+(\log n)^{-c})\sim \text{Exp}(\lambda \log n)
\end{equation}
and $(E_k , \1_{\{\tau_k^{\text{type}}\neq \tau_k^{\text{coal}}\}})_{k\geq 1}$ is an i.i.d. sequence.

Since (for example) $\{\tau_k^{\text{type}}\neq \tau_k^{\text{coal}}\}$ is not independent of the radius of the event at $\tau^{\text{br}}_k$, we note that $E_k$ and $\1_{\{\tau_k^{\text{type}}\neq\tau_k^{\text{coal}}\}}$ are not independent; therefore $(E_k)_{k\geq 1}$ is not independent of $k^*$. However, we can couple $(E_k , \1_{\{\tau_k^{\text{type}}\neq\tau_k^{\text{coal}}\}})_{k\geq 1}$ with a sequence $(E'_k)_{k\geq 1}$ which is independent of $k^*$ as follows. 

First sample the sequence $(\1_{\{\tau_k^{\text{type}}\neq\tau_k^{\text{coal}}\}})_{k\geq 1}$, and then independently sample a sequence $(E'_k , A_k)_{k\geq 1}$ with the same distribution as $(E_k , \1_{\{\tau_k^{\text{type}}\neq\tau_k^{\text{coal}}\}})_{k\geq 1}$. 
Then, for each $k\geq 1$, if $A_k=\1_{\{\tau_k^{\text{type}}\neq\tau_k^{\text{coal}}\}}$ set $E_k = E'_k$, and if not sample $E_k$ according to its conditional distribution given $\1_{\{\tau_k^{\text{type}}\neq\tau_k^{\text{coal}}\}}$. 

We now have a coupling of $(E_k , \1_{\{\tau_k^{\text{type}}\neq\tau_k^{\text{coal}}\}})_{k\geq 1}$ and $(E'_k)_{k\geq 1}$ such that $(E'_k)_{k\geq 1}$ is an i.i.d.~sequence, independent of $k^*$, with $E'_1 \sim \text{Exp}(\lambda \log n)$. Also, since $\P[\tau_k^{\text{type}}\neq \tau_k^{\text{coal}}]=\Theta((\log n)^{-1})$, we have that independently for each $k$, $E_k=E'_k$ with probability at least $1-\Theta((\log n)^{-1})$. 

We write
$$\sum_{k=1}^{k^*}E_k = \sum_{k=1}^{k^*}E'_k + \sum_{k=1}^{k^*}D_k, $$
where $D_k=E_k-E'_k$ and, by \eqref{k_star_distn}, $\sum_{k=1}^{k^*}E'_k \sim \text{Exp}(\lambda \kappa_n)$. 

Our next step is to bound $\sum_{k=1}^{k^*}D_k$. Firstly, applying a Chernoff bound to the binomial distribution yields
\begin{align} 
&\P\l[\l|\big\{k<(\log n)^{9/8} :D_k\neq 0\big\}\r|\geq (\log n)^{1/4}\r] \notag\\
&\hspace{4pc} = \P\l[ \text{Bin}\big((\log n)^{9/8}, \Theta ((\log n)^{-1})\big)\geq (\log n)^{1/4}\r] \notag\\
&\hspace{4pc} = \mc O \big(\exp (-\delta ' (\log n)^{1/4})\big)\label{chernoff_zk}
\end{align}
for some $\delta'>0$. Secondly,
\begin{align} \label{bound_zk}
\P \l[ |D_1 | \geq (\log n)^{-1/2} \r]&\leq \P \l[ E_1  \geq \tfrac{1}{2}(\log n)^{-1/2} \r]+\P \l[ E'_1 \geq \tfrac{1}{2}(\log n)^{-1/2} \r] \notag \\
&= 2 \exp(-\lambda (\log n)^{1/2}/2).
\end{align}
Combining \eqref{k_star_exp_bound}, \eqref{chernoff_zk} and \eqref{bound_zk}, we have that \begin{equation} \label{bound_zk_sum}
\P\l[\sum_{k=1}^{k^*}D_k\geq (\log n)^{-1/4}\r]=\mc  O \l(e^{-\delta '' (\log n)^{1/8}}\r),
\end{equation}
for some $\delta''\in (0,\delta)$.

Note that 
\begin{equation*} 
\sum_{k=1}^{k^*}E_k=\tau^{\text{br}}_{k^*}-k^*(\log n)^{-c}=h-k^*(\log n)^{-c}-(\tau_{k^*}^{\text{type}}-\tau_{k^*}^{\text{br}}),
\end{equation*}
with $0\leq \tau_{k^*}^{\text{type}}-\tau_{k^*}^{\text{br}}\leq (\log n)^{-c}$.
Let $H=\sum_{k=1}^{k^*}E'_k$. Then by \eqref{k_star_exp_bound} and \eqref{bound_zk_sum}, we have
\begin{equation*} 
\P\l[|h(p,\Pi)-H|\geq (\log n)^{9/8-c}+(\log n)^{-c}+(\log n)^{-1/4}\r]=\mc O\l(e^{-\delta '' (\log n)^{1/8}}\r).
\end{equation*}
The result follows since $c\geq 3$.
\end{proof}

Our next step is to show that a caterpillar is unlikely to end with an overshooting event.
\begin{lemma} \label{caterpillar_overshoot}
As $n \to \infty$,
$\P \l[\tau_{k^*}^{\text{type}}= \tau_{k^*}^{\text{over}}\r]=\mc O \l((\log n)^{\frac{21}{8}-c}\r).$
\end{lemma}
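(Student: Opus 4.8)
The plan is to reduce the claim to a union bound over a deterministically bounded number of branching events of the caterpillar, absorbing the contribution of (possibly) many branching events into a stretched-exponential error via \eqref{k_star_exp_bound}.

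First I would recall from the discussion preceding \eqref{k_star_distn} that the sequences $(\1_{\{\tau_k^{\text{type}}=\tau_k^{\text{div}}\}})_{k\geq 1}$, $(\1_{\{\tau_k^{\text{type}}=\tau_k^{\text{coal}}\}})_{k\geq 1}$ and $(\1_{\{\tau_k^{\text{type}}=\tau_k^{\text{over}}\}})_{k\geq 1}$ are each i.i.d.; implicit in this is that the caterpillar's excursion structure is defined for all $k\geq 1$ (extending past the stopping index $k^*$ by treating each excursion as though it had coalesced), and that for every $k$ the pair $(c^1,c^2)$ over $[\tau^{\text{br}}_k,\tau^{\text{type}}_k]$ has, in distribution, the law of an excursion $\eta^n$ started from $\zeta^n$ in the sense of Section~\ref{excursionsec} (as noted in the remark following Definition~\ref{caterpillar_defn}). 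Hence, for each $k$,
\[
\P\l[\tau_k^{\text{type}}=\tau_k^{\text{over}}\r]=\P_{\zeta^n}\l[\eta^n\text{ overshoots}\r]=\mc{O}\big((\log n)^{3/2-c}\big)
\]
by Lemma~\ref{Pover}.

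Next I would split according to whether $k^*$ is large. On the event $\{k^*<(\log n)^{9/8}\}$, if the caterpillar ends with an overshoot then $k=k^*$ witnesses $\tau_k^{\text{type}}=\tau_k^{\text{over}}$ with $k<(\log n)^{9/8}$, so
\[
\P\l[\tau_{k^*}^{\text{type}}=\tau_{k^*}^{\text{over}}\r]\leq \P\l[\exists\, k< (\log n)^{9/8}:\tau_k^{\text{type}}=\tau_k^{\text{over}}\r]+\P\l[k^*\geq (\log n)^{9/8}\r].
\]
The second term is $\mc{O}(e^{-\delta(\log n)^{1/8}})$ by \eqref{k_star_exp_bound}. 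For the first term, a union bound together with the i.i.d.\ structure and Lemma~\ref{Pover} gives
\[
\P\l[\exists\, k< (\log n)^{9/8}:\tau_k^{\text{type}}=\tau_k^{\text{over}}\r]\leq (\log n)^{9/8}\cdot \mc{O}\big((\log n)^{3/2-c}\big)=\mc{O}\big((\log n)^{21/8-c}\big),
\]
using $\tfrac98+\tfrac32=\tfrac{21}{8}$. Since $c\geq 3$ the term $\mc{O}((\log n)^{21/8-c})$ is a negative power of $\log n$ and so dominates the error $\mc{O}(e^{-\delta(\log n)^{1/8}})$, which finishes the proof.

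Since the two ingredients — the geometric tail \eqref{k_star_exp_bound} and the single-excursion overshoot estimate of Lemma~\ref{Pover} — are already available, there is no genuinely hard step here; the only care needed is bookkeeping: making the i.i.d.\ claim precise by extending the caterpillar's excursion structure beyond $k^*$ so the union bound over $k<(\log n)^{9/8}$ makes sense, and checking that each such excursion has exactly the law to which $\P_{\zeta^n}$ in Lemma~\ref{Pover} refers, uniformly in $k$. (Summing the geometric series for $k^*$ and invoking Lemma~\ref{Pdiv} for the per-excursion divergence probability would in fact yield the sharper bound $\mc{O}((\log n)^{5/2-c})$, but the cruder truncation argument already gives the stated estimate.)
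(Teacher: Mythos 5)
Your proposal is correct and follows essentially the same route as the paper: the paper likewise uses the inclusion $\{\tau_{k^*}^{\text{type}}=\tau_{k^*}^{\text{over}}\}\subset\{k^*\geq(\log n)^{9/8}\}\cup\bigcup_{k\leq(\log n)^{9/8}}\{\tau_k^{\text{type}}=\tau_k^{\text{over}}\}$, bounds the first event by \eqref{k_star_exp_bound} and the union by $(\log n)^{9/8}$ times the per-excursion overshoot probability from Lemma~\ref{Pover}. Your extra bookkeeping about the i.i.d.\ excursion structure (and the remark that a sharper exponent is available) is consistent with, but not needed beyond, what the paper does.
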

\begin{proof}
By Lemma \ref{Pover}, for $k\geq 1$
\begin{equation} \label{prob_undec}
\P[\tau_k^{\text{type}}=\tau_k^{\text{over}}]=\mc O((\log n)^{\frac{3}{2}-c}).
\end{equation}
Moreover,  
$$ \{\tau_{k^*}^{\text{type}}=\tau_{k^*}^{\text{over}}\}
\subset \{k^*\geq (\log n)^{9/8}\}\cup 
\bigcup _{k=1}^{(\log n)^{9/8}}\{\tau_k^{\text{type}}=\tau_k^{\text{over}}\}.$$
It follows, using \eqref{k_star_exp_bound}, that
\begin{align*}
\P[\tau_{k^*}^{\text{type}}=\tau_{k^*}^{\text{over}}]
&=\mc O(e^{-\delta(\log n)^{1/8}})+\mc O((\log n)^{\frac{3}{2} +\frac{9}{8}-c})=\mc O((\log n)^{\frac{21}{8}-c}).
\end{align*}
This completes the proof.
\end{proof}

We now show that a single caterpillar can be coupled to a Brownian motion in such a way that the caterpillar closely follows the Brownian motion, during time $[0,h(p,\Pi)]$.

Recall that the rate at which $\xi^1$ jumps from $y$ to $y+z$ is given by intensity measure
$m^n(dz)$, defined in \eqref{jump of size z}. 
Thus for $(c_t)_{t\geq 0}$ started at $p$, $\E[c^1_t]=p$ and the covariance matrix of $c^1_t$ is $ \sigma ^2 t \text{Id}$ since by \eqref{first sigma squared}, 
$$\sigma ^2 = \tfrac{1}{2}\int _{\R^2}|z|^2 m^n (dz). $$

Armed with this, the following lemma is no surprise.
\begin{lemma} \label{single_caterpillar}
Let $(W_t)_{t\geq 0}$ 
be a two-dimensional Brownian motion with $W_0=p$.
We can couple $(c _t (p,\Pi))_{t\leq h(p,\Pi)}$ with $(W_t)_{t\geq 0}$, in such a way that 
$(W_t)_{t\geq 0}$ is independent of $(\tau^{br}_k)_{k\geq 1}$ and $k^*(p,\Pi)$, and for any $r>0$, with probability at least $1-\mc O((\log n)^{-r})$,
for $t\leq h(p,\Pi)$, 
$$|c^1_t(p,\Pi)-W_{\sigma^2 t}|\leq (\log n)^{\frac{9}{8}-\frac{c}{3}}.$$
\end{lemma}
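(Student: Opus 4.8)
The plan is to Skorohod-embed the single lineage $c^1$ into the Brownian motion $W$ exactly as in the proofs of Lemmas~\ref{exclengths} and \ref{Pinteract}, and then control the time-change error over the (random) lifetime $h(p,\Pi)$. Concretely, first I would note that $(c^1_t(p,\Pi))_{0\leq t\leq h(p,\Pi)}$ is, by the Remark following Definition~\ref{caterpillar_defn}, a Markov process with the same jump rate and jump law as a single potential ancestral lineage. So, discretising at the lattice $j/n$ as in the proof of Lemma~\ref{exclengths}, I set $X_j=c^1_{j/n}-c^1_{(j-1)/n}$; these are i.i.d.\ with covariance $(2\sigma^2/n)\mathrm{Id}$ (by \eqref{sigma squared}) and $\E[|X_1|^4]=\mc O(n^{-2})$ (by \eqref{bound of fourth moment of X}). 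Since $X_1$ is rotationally symmetric I can apply Skorohod embedding to couple $(X_j)_{j\geq 1}$ to a two-dimensional Brownian motion $W$ started at $p$ together with i.i.d.\ stopping-time increments $\upsilon_j-\upsilon_{j-1}$ with mean $2\sigma^2/n$ and second moment $\mc O(n^{-2})$, just as in \eqref{skembed2}; crucially $W$ can be taken independent of the branching times $(\tau^{\mathrm{br}}_k)_{k\geq 1}$ and of $k^*$, since those are measurable with respect to the selective-event indicators and the $c^2$-coordinate, which we do not use here. The rescaling $W_{\sigma^2 t}$ in the statement matches the mean $\E[\upsilon_{\lfloor tn\rfloor}]=2\sigma^2\lfloor tn\rfloor/n$ after the factor-of-two bookkeeping between the compound-Poisson variance and the Skorohod clock.

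Next I would bound the lifetime. By Lemma~\ref{lifetime}, with probability $1-\mc O(e^{-\delta(\log n)^{1/8}})$ we have $h(p,\Pi)\leq H+3(\log n)^{-1/4}$ where $H\sim\mathrm{Exp}(\kappa_n\lambda)$, and since $\kappa_n\lambda=\Theta(1)$, a simple exponential tail bound gives $h(p,\Pi)\leq (\log n)^{2}$ (say) with probability at least $1-\mc O((\log n)^{-r})$ for any fixed $r$ — indeed with much better probability. So it suffices to control the error on the deterministic time horizon $[0,(\log n)^2]$. On that horizon I would: (i) use Chebyshev on $\upsilon_{\lfloor tn\rfloor}$, whose variance is $\mc O(tn^{-1})$, together with a union bound over $t$ in a fine (say $n^{-1}$-spaced) grid of $[0,(\log n)^2]$, to get that $\sup_{t\leq(\log n)^2}|\upsilon_{\lfloor tn\rfloor}-2\sigma^2\lfloor tn\rfloor/n|\leq n^{-1/3}$ with probability $1-\mc O((\log n)^{-r})$ — one needs $n^{1/3}$ to beat the grid size times the Chebyshev bound, which it does comfortably, and monotonicity of $\upsilon$ handles between grid points; (ii) use the modulus-of-continuity estimate \eqref{eq:2d_BM_sup} for $W$ to convert the $n^{-1/3}$ clock error and the $n^{-1}$ discretisation slack into a spatial error $\sup|W_{\upsilon_{\lfloor tn\rfloor}}-W_{\sigma^2\lfloor tn\rfloor/n}| = \mc O(n^{-1/8})$ with probability $1-\mc O(e^{-n^{1/12}})$; (iii) bound the within-a-$1/n$-step fluctuation of $c^1$ itself, $\sup_{t}|c^1_t-c^1_{\lfloor tn\rfloor/n}|$, by noting it is at most $2\mc R_n$ times the number of jumps in the relevant interval, which is Poisson$(\Theta(1))$, so a union bound over $\mc O(n(\log n)^2)$ intervals and a Poisson tail give this is $\mc O(\log n)\cdot 2\mc R_n = \mc O(n^{-1/2}\log n)$ with overwhelming probability. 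Summing (i)--(iii), $\sup_{t\leq h(p,\Pi)}|c^1_t - W_{\sigma^2 t}| = \mc O(n^{-1/8})$, which is far smaller than the claimed $(\log n)^{9/8-c/3}$, with probability $1-\mc O((\log n)^{-r})$.

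The main obstacle — or rather the only genuinely delicate point — is the interplay between the randomness of the lifetime $h(p,\Pi)$ and the requirement that $W$ be independent of $(\tau^{\mathrm{br}}_k)_{k\geq 1}$ and $k^*$. The clean way to handle it is to prove the uniform bound on the fixed horizon $[0,(\log n)^2]$ first, using only the law of $c^1$ and the independent Skorohod clock, and only afterwards intersect with the event $\{h(p,\Pi)\leq(\log n)^2\}$ from Lemma~\ref{lifetime}; since the bound on $[0,(\log n)^2]$ does not look at the lifetime at all, no conditioning issue arises, and the independence of $W$ from the branching structure is preserved because $W$ and the Skorohod times are built from $(X_j)$ alone. (The very generous gap between the $\mc O(n^{-1/8})$ that the argument actually produces and the stated $(\log n)^{9/8-c/3}$ is deliberate: it leaves room in later lemmas for the accumulation of such errors across the $\mc O((\log n)^{9/8})$ caterpillars making up the branching caterpillar, and it means none of the exponents above need to be optimised.)
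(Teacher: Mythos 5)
Your construction has a genuine gap at exactly the point the lemma is really about: the independence of $W$ from $(\tau^{\text{br}}_k)_{k\geq 1}$ and $k^*(p,\Pi)$. You build $W$ by Skorohod-embedding the increments $X_j=c^1_{j/n}-c^1_{(j-1)/n}$ of the \emph{whole} path of $c^1$, and then assert that $W$ can be taken independent of the branching data because those are ``measurable with respect to the selective-event indicators and the $c^2$-coordinate''. That is not true. The times $\tau^{\text{br}}_k$ are times of selective events affecting $\xi^1$, i.e.\ they are (a thinned subset of) the jump times of $c^1$ itself, so they are not independent of the path you are embedding; and $k^*$ is determined by whether the excursions of the pair $(c^1,c^2)$ on the windows $[\tau^{\text{br}}_k,\tau^{\text{br}}_k+(\log n)^{-c}]$ coalesce, diverge or overshoot, which is correlated with the displacement of $c^1$ during those windows. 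A Brownian motion that is a measurable function of $c^1$'s increments (plus auxiliary randomness) therefore cannot simply be declared independent of $\tau^{\text{br}}_k$ and $k^*$, and this independence is not a cosmetic requirement: it is used later (via Lemma~\ref{lifetime} and the proof of Theorem~\ref{result d>1}) to pair the spatial motions $W^j$ with independent exponential lifetimes $H_j$ in the limiting branching Brownian motion. The paper's proof is organised precisely to avoid this trap: conditionally on the branching skeleton, $c^1$ restricted to each inter-window interval $[\tau^{\text{br}}_k+(\log n)^{-c},\tau^{\text{br}}_{k+1})$ is a jump process with intensity $(1-\v{s_n})m^n(dz)$; these pieces are Skorohod-embedded into an i.i.d.\ family $(W^k)$ independent of the branching data, and the windows are filled in with increments of an independent Brownian motion $B$. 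The price of decoupling $W$ from $c^1$ on the windows is an error of roughly $(\log n)^{-c/3}$ per window, accumulated over up to $(\log n)^{9/8}$ windows — that is exactly where the stated bound $(\log n)^{9/8-c/3}$ comes from, not slack left for later lemmas. Your claimed error $\mc O(n^{-1/8})$ is itself a red flag: a coupling that tight must track $c^1$ through the windows and hence cannot be independent of the excursion outcomes that define $k^*$.

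A secondary, fixable point: in step (i) the union bound over an $n^{-1}$-spaced grid of $[0,(\log n)^2]$, with per-point Chebyshev bound $\mc O(t n^{-1/3})$, gives a total failure probability of order $n^{2/3}(\log n)^4$, which is vacuous; you would need a maximal inequality (e.g.\ Doob applied to $\upsilon_m - m\,\E[\upsilon_1]$) or a much coarser grid combined with monotonicity of $\upsilon$. Note also that the paper never needs a uniform clock estimate over a $(\log n)^2$ horizon in one go: it applies the embedding on each inter-branch interval (of length $\mc O(1)$ with high probability) and takes a union bound over $k\leq(\log n)^{9/8}$, which is the structure you would in any case be forced into once the conditional construction above is adopted.
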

\begin{remark}
By the definition of the caterpillar in Definition \ref{caterpillar_defn}, for all $t\leq h(p,\Pi)$, $|c^2_t-c^1_t|\leq (\log n)^{-c}$. Hence under the coupling of Lemma \ref{single_caterpillar}, with probability at least $1-\mc O((\log n)^{-r})$, 
$|c^2_t(p,\Pi)-W_{\sigma^2 t}|\leq 2(\log n)^{\frac{9}{8}-\frac{c}{3}}$. 
\end{remark}
\begin{proof}
The proof is closely related to the second half of the proof of Lemma~\ref{exclengths}.
Note for $k\geq 0$, on the time interval $[\tau^{br}_k+(\log n)^{-c},\tau^{br}_{k+1})$, $c^1_t$ is a pure jump process with rate of jumps from $y$ to $y+z$ given by $(1-\v{s_n})m^n(dz)$.
Let $(\tilde c _t)_{t \geq 0}$ be a pure jump process with $\tilde c_0=0$ and rate of jumps from $y$ to $y+z$ given by $(1-\v{s_n})m^n(dz)$.
For $i\geq 1$, let 
$$X_i = \tilde c _{i/n} - \tilde c _{(i-1)/n}.$$
Then $(X_i)_{i\geq 1}$ are i.i.d., and as in \eqref{sigma squared} and \eqref{bound of fourth moment of X}, we have $\E[|X_1|^2]=\frac{2\sigma^2 (1-\v{s_n}) }{n}$ and $\E[|X_1|^4]= \mc O(n^{-2})$.

By the same Skorohod embedding argument as for \eqref{skembed2}, 
there is a two-dimensional Brownian motion $W$ started at $0$ and a sequence $\upsilon_1, \upsilon_2, \ldots $ of stopping times for $W$ such that for $i\geq 1$,
$W_{\upsilon_i}=\tilde c_{i/n}$
and
$$\P[|\upsilon_{\lfloor tn \rfloor}-\sigma ^2 (1-\v{s_n}) t|\geq n^{-1/3}]\leq \mc O(t n^{-1/3}).  $$
Fix $t>0$. Since $\v{s_n}=\frac{\log n}{n}$, for $n$ sufficiently large,
$$\P[|\upsilon_{\lfloor tn \rfloor}-\sigma ^2  t|\geq 2n^{-1/3}]\leq \mc O(n^{-1/3}).  $$
Then by a union bound over $j=1,\ldots,\lfloor n^{1/4}t\rfloor$,
\begin{align}
\P\l[\exists j\leq \lfloor n^{1/4}t \rfloor : |\upsilon_{\lfloor j n^{3/4} \rfloor} - \sigma ^2 j n^{-1/4}| \geq 2 n^{-1/3}\r]
&\leq (n^{1/4}t) \mc O(n^{-1/3})\label{union bound} \\
&=\mc O(n^{-1/12}).\notag
\end{align}
Again by a union bound over $j$,
\begin{align} 
&\P\bigg[\exists j \leq \lfloor n^{1/4}t \rfloor:\,\sup\bigg\{|W_{\sigma ^2 j n^{-1/4}} - W_u |\notag\\
&\hspace{4pc}\-u\in [\sigma^2 j n^{-1/4}-2n^{-1/3},\sigma^2 (j+1) n^{-1/4}+2n^{-1/3}]\bigg\}\geq n^{-1/10} \bigg]\notag \\
&\hspace{2pc}\leq (n^{1/4} t) 2 \P\l[\sup \{|W_s-W_0|:s\in [0, 4n^{-1/3}]\}\geq \tfrac{1}{2}n^{-1/10}\r]\notag\\
&\hspace{2pc}\leq 4n^{1/4}t \exp (-n^{2/15}/128)=o(n^{-1/12}).\label{eq:W_sup}
\end{align}
Here, the last line follows by \eqref{eq:2d_BM_sup}.

Under the complement of the event of~(\ref{union bound}), for all $j<\lfloor n^{1/4}t\rfloor $,
$$ |\upsilon_{\lfloor j n^{3/4} \rfloor} - \sigma ^2 j n^{-1/4}| \leq 2n^{-1/3}
\mbox{ and }
|\upsilon_{\lfloor (j+1) n^{3/4} \rfloor} - \sigma ^2 (j+1) n^{-1/4}| \leq 2n^{-1/3},$$
which implies that for $i$ such that $jn^{-1/4}\leq i n^{-1}\leq (j+1)n^{-1/4}$, 
$$\upsilon_i\in \l[\sigma^2 j n^{-1/4} - 2n^{-1/3},\sigma^2 (j+1) n^{-1/4} + 2n^{-1/3}\r].$$ 
Hence combining~(\ref{union bound}) and~(\ref{eq:W_sup}),
$$\P\l[\exists i \leq \lfloor tn \rfloor : |\tilde c_{i/n} - W_{\sigma ^2 i/n}|\geq 2n^{-1/10}\r]=
\mc{O}(n^{-1/12}). $$

Our next step is to control $|\tilde c_s-\tilde c_{i/n}|$ during the interval $s\in[i/n, (i+1)/n]$. 
The distribution of the number of jumps made by $\tilde c$ on an interval $[i/n,(i+1)/n]$ is Poisson 
with parameter $(1-\v{s_n})\lambda$, where $\lambda$ is given by \eqref{kappa_n_lambda}, and the maximum jump size is $2\mc{R}_n$;
using ~(\ref{poisson tail}) with $\chi=(1-\v{s_n})\lambda$ and $k=\log n$ gives that
$$\P\l[\exists i\leq \lfloor tn \rfloor :\sup_{s\in [i/n,(i+1)/n]} |\tilde c_s - \tilde c _{i/n}| \geq (\log n)2 \mathcal R_n \r] =
o(n^{-1}). $$
Hence for $n$ large enough that $(\log n)2 \mathcal R_n\leq n^{-1/10}$, using \eqref{eq:W_sup} again to bound $|W_s-W_{\sigma^2 i/n}|$ during the interval $[\sigma^2 i/n,\sigma^2 (i+1)/n]$ we have
\begin{equation} \label{eq:W_coupling}
\P\l[\sup_{s\leq t} |\tilde c_s - W_{\sigma ^2 s}| \geq 4n^{-1/10}\r]=\mc{O}(n^{-1/12}).
\end{equation}

We now apply this coupling to $(c^1_t)_{\tau^{br}_k+(\log n)^{-c}\leq t \leq \tau^{br}_{k+1}}$ for each $k\geq 0$, and let
the caterpillar evolve independently of the Brownian motion
on each interval $[\tau^{\text{br}}_k, \tau^{\text{br}}_k+(\log n)^{-c}]$.

More precisely, 
let $(\tilde c ^k)_{k \geq 0}$ be an i.i.d.~sequence of pure jump processes with  $\tilde c ^k_0=0$ and rate of jumps from $y$ to $y+z$ given by $(1-\v{s_n})m^n(dz)$.
Let  $(W^k)_{k\geq 0}$ be an i.i.d.~sequence of 2-dimensional  Brownian motions started at $0$ and for each $k\geq 0$, couple $W^k$ and $\tilde c^k$ in the same way as above, so that for fixed $t>0$, for each $k\geq 0$,
\begin{equation} \label{eq:Wk_coupling}
\P\l[\sup_{s\leq t} |\tilde c^k_s - W^k_{\sigma ^2 s}| \geq 4n^{-1/10}\r]=\mc{O}(n^{-1/12}).
\end{equation}
Then by the Strong Markov property for the process $c^1$, we can couple $(\tilde c ^k,W^k)_{k\geq 0}$ and $c^1$ in such a way that
for $k\geq 0$ and $s\in [0, \tau_{k+1}^{br}-(\tau_k^{br}+(\log n)^{-c}))$,
$$
c^1_{s+\tau_k^{br}+(\log n)^{-c}}-c^1_{\tau_k^{br}+(\log n)^{-c}}=\tilde c^k_s.
$$
and $(\tilde c ^k,W^k)_{k\geq 0}$ is independent of $\big(\tau^{\text{br}}_k, (c^1_t-c^2_t)|_{[\tau^{\text{br}}_k,\tau^{\text{br}}_k +(\log n)^{-c})}\big)_{k\geq 0}$.

Let $B$ be another independent 2-dimensional Brownian motion started at 0. We now define a single Brownian motion $W$ by piecing together increments of $B$ and $(W^k)_{k \geq 0}$.
For $s<\sigma^2(\log n)^{-c}$, let $W_s=B_s+p$.
Then for $k\geq 0$, define the increments of $W$ on the time interval $[\sigma^2(\tau^{br}_k+(\log n)^{-c}),\sigma^2(\tau^{br}_{k+1}+(\log n)^{-c}))$ as follows.
For $s \in [0,\sigma^2(\tau^{br}_{k+1}-\tau^{br}_k))$, let
$$
W_{s+\sigma^2(\tau^{br}_k+(\log n)^{-c})}-W_{\sigma^2(\tau^{br}_k+(\log n)^{-c})}=W^k_s.
$$
Then $W$ is a Brownian motion independent of $\big(\tau^{\text{br}}_k, (c^1_t-c^2_t)|_{[\tau^{\text{br}}_k,\tau^{\text{br}}_k +(\log n)^{-c})}\big)_{k\geq 0}$, which implies that $W$ is independent of both $k^*$ and $(\tau^{\text{br}}_k)_{k\geq 1}$.

We now check that $W_t$ is close to $c^1_t$ for $t<h$. 
By \eqref{eq:Ek_distn},
$$\P\l[\tau^{\text{br}}_{k+1}-\tau^{\text{br}}_k \geq 1+(\log n)^{-c}\r]\leq n^{-\lambda}.$$ 
Hence applying \eqref{eq:Wk_coupling} with $t=1+(\log n)^{-c}$ for each $k\leq (\log n)^{9/8}$ and using \eqref{k_star_exp_bound}, we have that with probability at least 
$1- \mc O(e^{-\delta (\log n)^{1/8}})$, for $0\leq k\leq k^*$ and $t\in [\tau^{\text{br}}_k +(\log n)^{-c},\tau^{\text{br}}_{k+1})$, 
\begin{equation}\label{after_lognc}
\l|\l(c^1_t- c^1_{\tau^{\text{br}}_k +(\log n)^{-c}}\r)-\l(W_{\sigma^2 t}-W_{\sigma^2(\tau^{\text{br}}_k +(\log n)^{-c})}\r)\r|\leq 4n^{-1/10}.
\end{equation}
For each $k$, by \eqref{eq:2d_BM_sup}, 
\begin{align} 
&\P\l[\sup \l\{|W_{\sigma^2 t}-W_{\sigma ^2 \tau^{\text{br}}_k}|:t\in [\tau^{\text{br}}_k,\tau^{\text{br}}_k+(\log n)^{-c}]\r\}\geq \tfrac{1}{3}(\log n)^{-c/3}\r] \notag\\
&\hspace{5pc}\leq 4 \exp(-(\log n)^{c/3}/72 \sigma^2)\notag \\
&\hspace{5pc}=o\l((\log n)^{-r-\frac{9}{8}}\r), \label{bm_jump}
\end{align} for any $r>0$.
Hence, using \eqref{k_star_exp_bound} again,
\begin{align} \label{mult_bm_jumps}
&\P\l[\sum_{k=1}^{k^*}\,\sup\l\{|W_{\sigma^2 t}-W_{\sigma ^2 \tau^{\text{br}}_k}|:t\in [\tau^{\text{br}}_k,\tau^{\text{br}}_k+(\log n)^{-c}]\r\}\geq \tfrac{1}{3}(\log n)^{\frac{9}{8}-\frac{c}{3}}\r]\notag\\
&\hspace{2pc}\leq \P\l[k^*\geq (\log n)^{9/8}\r]+(\log n)^{9/8} o((\log n)^{-r-\frac{9}{8}})\notag\\
&\hspace{2pc}=o((\log n)^{-r}).
\end{align}
For $k\geq 0$, on the time interval $ [\tau^{\text{br}}_k, \tau^{\text{br}}_k+(\log n)^{-c}]$ the process $c^1_t$ is a pure jump process with rate of jumps from $y$ to $y+z$ given by $m^n(dz)$.
Hence using the same Skorohod embedding argument as for \eqref{eq:W_coupling}, we can couple 
$(c^1_{s+\tau^{\text{br}}_k}-c^1_{\tau^{\text{br}}_k})_{s\leq (\log n)^{-c}}$ with a Brownian motion $W'$ started at $0$ in such a way that
$$ 
\P\l[\sup_{s\leq (\log n)^{-c}} |(c^1_{s+\tau^{\text{br}}_k}-c^1_{\tau^{\text{br}}_k}) - W_{\sigma ^2 s}| \geq 4n^{-1/10}\r]=\mc{O}(n^{-1/12}).
$$

Applying \eqref{mult_bm_jumps} and \eqref{k_star_exp_bound}, it follows that
\begin{multline*}
\P\bigg[\sum_{k=1}^{k^*}\,\sup\l\{|c^1_{t}-c^1_{\tau^{\text{br}}_k}|:t\in [\tau^{\text{br}}_k,\tau^{\text{br}}_k+(\log n)^{-c}]\r\}\notag\\
\hspace{6pc}\geq \tfrac{1}{3}(\log n)^{\frac{9}{8}-\frac{c}{3}}+4n^{-1/10}(\log n)^{9/8}\bigg]\\
= \mc{O}((\log n)^{-r}).
\end{multline*}

The stated result follows by combining the above equation with \eqref{after_lognc}, \eqref{k_star_exp_bound} and \eqref{mult_bm_jumps}.
\end{proof}

\subsubsection{The branching caterpillar}
\label{sec:branching_cat_sec}

We now construct a branching process of caterpillars. We start from a single caterpillar and
allow it to evolve until the time $h$. We start two independent caterpillars from the locations of $c^1_h$ and $c^2_h$. Now
iterate. The independent caterpillars defined in this way will be indexed by points of 
$\mathcal U=\{\emptyset\}\cup\bigcup _{k=1}^\infty \{1,2\}^k$. More formally:
\begin{defn}[Branching caterpillar] \label{branching_defn}

Let $(\Pi_j)_{j\in \mathcal U}$ be a sequence of independent Poisson point processes on $\mathscr{X}$ with intensity measure \eqref{eq:ppp_intensity}. For $p\in \R^2$, we define $(\mathcal C _t (p,(\Pi_j)_{j\in \mathcal U}))_{t\geq 0}$ as a process on $\cup_{k=1}^\infty (\R^2)^k$ as follows. 
For $s>0$, let
\begin{align}\label{eq:Pij}
\Pi_j^s = \{(t-s,x,r,z_1,z_2,q,v):(t,x,r,z_1,z_2,q,v)\in \Pi_j\}.
\end{align}
Define $(p_j,t_j,h_j)$ inductively for $j\in \mc U$ by $p_\emptyset =p$, $t_\emptyset =0$ and
\begin{align*}
h_j&=t_j+h(p_j,\Pi_j^{t_j})\\
t_{(j,1)}&=t_{(j,2)}=h_j\\
p_{(j,1)}&=c^1_{h_j-t_j}(p_j,\Pi_j^{t_j})\\
p_{(j,2)}&=c^{2}_{h_j-t_j}(p_j,\Pi_j^{t_j}).
\end{align*}
Finally, define $\mathcal U(t)=\{j\in \mathcal U:t_j\leq t \leq h_j\}$ and 
$$\mathcal C _t (p,(\Pi_j)_{j\in \mathcal U})=(c_{t-t_j}(p_j,\Pi_j^{t_j}))_{j\in \mathcal U(t)}.$$
\end{defn}

In words, $\mc{U}(t)$ is the set of indices of the caterpillars that are active at time $t$, and $\mc{C}_t$ is the set of (positions of) those caterpillars. Note that we translate the time coordinates in \eqref{eq:Pij} to match our definition of a caterpillar, which began at time $0$.
The jumps in $\mc{C}_t$ occur at the time coordinates of events in $\cup_{j\in \mc U}\Pi_j$.

We now show that for any constant $a>0$, with high probability, the longest `chain' of caterpillars has length at most $a \log \log n+1$. For $k\in \N$, let $\mathcal U _k=\{\emptyset\}\cup \bigcup_{j=1}^k \{1,2\}^j$. 
\begin{lemma} \label{branching_in_loglog}
Fix $T>0$; then for any $r>0$, $a>0$,
$\P[\mathcal U (T)\not\subseteq \mathcal U_{\lfloor a\log \log n \rfloor}]=o((\log n)^{-r})$.
\end{lemma}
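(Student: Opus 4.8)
The plan is to bound the number of branching events along any chain of caterpillars that can occur before time $T$. Each caterpillar has lifetime $h(p,\Pi)$, and by Lemma~\ref{lifetime} this lifetime is close to an $\text{Exp}(\kappa_n\lambda)$ random variable with $\kappa_n\lambda$ bounded away from $0$ and $\infty$ (using \eqref{kappaconv} and $\lambda=\Theta(1)$). In particular, along a single chain $\emptyset, j_1, j_1j_2, \ldots$ of caterpillars, the successive lifetimes are (up to the small errors quantified in Lemma~\ref{lifetime}) i.i.d.~exponentials of parameter $\Theta(1)$, so the time taken to traverse a chain of length $k$ is a sum of $k$ such variables, which is $\Theta(k)$ with overwhelming probability once $k$ is large. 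Thus for a chain to survive until time $T$ with $k$ much larger than a constant multiple of $T$ is exponentially unlikely in $k$; taking $k=\lfloor a\log\log n\rfloor$ this probability is of order $e^{-\delta a\log\log n}=(\log n)^{-\delta a}$, which is the right ballpark but we will want it summable against the number of chains.

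The key steps, in order, are as follows. First I would fix $a>0$ and $r>0$ and observe that $\{\mathcal U(T)\not\subseteq \mathcal U_{\lfloor a\log\log n\rfloor}\}$ is contained in the event that some chain of length $k:=\lfloor a\log\log n\rfloor+1$ has all of its caterpillars starting before time $T$; since a caterpillar indexed by $j\in\{1,2\}^k$ is active at some time $\le T$ only if $t_j\le T$, this forces $\sum_{i=1}^{k} h(p_{j_{|i-1}},\Pi_{j_{|i-1}}^{t_{j_{|i-1}}})\le T$ along that chain. Second, I would use the coupling of Lemma~\ref{lifetime}: on an event of probability $1-\mc O(e^{-\delta(\log n)^{1/8}})$ (per caterpillar, union-bounded over the at most $2^{k+1}$ caterpillars in $\mathcal U_k$, which is $e^{\mc O(\log\log n)}$ and hence negligible against $e^{-\delta(\log n)^{1/8}}$), each $h_j$ differs from an independent $\text{Exp}(\kappa_n\lambda)$ variable $H_j$ by at most $3(\log n)^{-1/4}$, so $\sum_{i=1}^k h(p_{j_{|i-1}},\cdot)\ge \sum_{i=1}^k H_{j_{|i-1}} - 3k(\log n)^{-1/4}$, and the error term is $o(1)$. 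Third, for a fixed chain, $\P[\sum_{i=1}^k H_i\le T+1]$ is the probability that a $\text{Gamma}(k,\kappa_n\lambda)$ variable is at most a constant; by a Chernoff/Markov bound (e.g.~$\E[e^{-\theta \sum H_i}]=(\kappa_n\lambda/(\kappa_n\lambda+\theta))^{-k}$ evaluated at a suitable $\theta>0$, or directly $\P[\text{Gamma}(k,c)\le M]\le (cM)^k/k!$) this is at most $C^k/k!$ for a constant $C=C(T)$, which is $o\big((\log n)^{-r}/2^k\big)$ once $k\gtrsim \log\log n$, since $k!$ grows faster than $(\log n)^{\text{const}}$ when $k=\Theta(\log\log n)$ — more precisely $k!\ge (k/e)^k$ and $(k/e)^k$ with $k\asymp\log\log n$ is of order $\exp(\Theta(\log\log n\cdot\log\log\log n))$, which beats any power of $\log n$. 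Fourth, union bound over the at most $2^k$ chains of length $k$ in $\mathcal U_k$: the total is at most $2^k C^k/k! = o((\log n)^{-r})$, and adding back the coupling-failure probability $\mc O(e^{-\delta(\log n)^{1/8}})$ leaves the bound $o((\log n)^{-r})$.

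The one point requiring a little care — and the main obstacle — is the dependence structure: the Poisson point processes $\Pi_j$ driving distinct caterpillars are independent by construction (Definition~\ref{branching_defn}), so along any fixed chain the lifetimes really are independent, but one must be careful that the \emph{identity} of the chain (which children are followed) is not adversarially correlated with the lifetimes in a way that defeats the union bound. This is handled by the union bound being taken over all $2^k$ possible chains \emph{before} examining the lifetimes, together with the fact that, conditionally on the tree structure, the lifetime of the caterpillar at node $j$ depends only on $\Pi_j$ and the starting point $p_j$ (and by the Markov/translation structure the conditional law of $h(p_j,\Pi_j^{t_j})$ does not depend on $p_j$). Thus each chain contributes i.i.d.~lifetimes, the per-chain estimate $C^k/k!$ applies uniformly, and the crude union bound over $2^k$ chains suffices. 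The remaining estimates are routine: the Gamma tail bound at a fixed argument, the factorial growth beating powers of $\log n$, and absorbing the exponentially small coupling-failure term.
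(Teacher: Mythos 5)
Your proposal is correct and follows essentially the same route as the paper: bound the bad event by the existence of a node at depth $\lfloor a\log\log n\rfloor+1$ with $t_j\le T$, use Lemma~\ref{lifetime} to replace the (independent, by construction of the $\Pi_j$) lifetimes along a fixed chain by i.i.d.\ $\text{Exp}(\kappa_n\lambda)$ variables up to a negligible error, and then beat the $2^k$ union bound with a tail estimate showing that $k\asymp\log\log n$ such exponentials rarely sum to a constant. The only cosmetic difference is that you use the Gamma bound $(cM)^k/k!$ where the paper passes to the equivalent Poisson tail and applies \eqref{poisson tail} (and note the sign slip in your MGF expression, which should be $(\kappa_n\lambda/(\kappa_n\lambda+\theta))^{k}$).
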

\begin{proof}
Fix $v\in\{1,2\}^{\lfloor a\log \log n \rfloor+1}$. Then by a union bound,
\begin{equation} \label{contained in k levels}
\P\l[\exists w\in \{1,2\}^{\lfloor a\log \log n \rfloor+1}\text{ s.t. }t_w\leq T\r]\leq 2^{\lfloor a\log \log n \rfloor+1}\P[t_v\leq T]. 
\end{equation}

Note that by Lemma \ref{lifetime}, $t_v = \sum_{i=1}^{\lfloor a\log \log n \rfloor+1}H_i +R$ where $(H_i)_{i \geq 1}$ are i.i.d.~with $H_1\sim \text{Exp}(\lambda \kappa_n)$ and $$\P\l[R\geq 3(a\log \log n+1 )(\log n)^{-1/4}\r]=\mc O((\log \log n) e^{-\delta (\log n)^{1/8}}).$$ Hence (if $n$ is sufficiently large that $3(a\log \log n+1)(\log n)^{-1/4}\leq T/2$),
if $Z'$ is Poisson with parameter $\lambda \kappa_n T/2$,
$$
\P[t_v\leq T]\leq \P[Z'\geq a\log \log n+1]+\mc O\l((\log \log n) e^{-\delta (\log n)^{1/8}}\r).
$$
We use~(\ref{poisson tail}) and combine with~(\ref{contained in k levels})
to deduce that, for any $r>0$,
$$
\P[\mathcal U (T)\not\subseteq \mathcal U_{\lfloor a\log \log n \rfloor}]= \P\l[\exists w\in \{1,2\}^{\lfloor a\log \log n \rfloor+1}\text{ s.t. }t_w \leq T\r]=o((\log n)^{-r}).
$$
This completes the proof.
\end{proof}

The next task is to couple the branching caterpillar to the rescaled dual of the {\slfvs}. Since we have 
expressed the dual as a deterministic function of the driving point process of events in Definition \ref{slfvs_dual_determ}, it is enough to
find an appropriate coupling of the driving events for the branching caterpillar and those of a
{\slfvs} dual. 

The idea, roughly, is as follows. 
Each `branch' of the branching caterpillar is constructed from an independent 
driving process. For each of these we should like to retain those events that affected the caterpillar, but we can discard the rest.
If two or more caterpillars are close enough that the events affecting them could overlap, to avoid having too many events in these regions we have to arbitrarily choose one caterpillar and discard the events affecting the others.
We then supplement these with additional events, appropriately
distributed to fill in the gaps and arrive at the driving Poisson point process for a {\slfvs} dual, with intensity as in \eqref{eq:ppp_intensity}.  
We will then check that the {\slfvs} dual corresponding to this point process coincides 
with our branching caterpillar, with probability tending to one as $n\rightarrow\infty$.

To put this strategy into practice we require some notation.
Let $\mathcal U_0=\mathcal U \cup \{0\}$. For $V\subset \mc U_0$ let $\max(V)$ refer to the maximum element of $V$ with respect to a fixed ordering in which $0$ is the minimum value (it does not matter precisely which ordering we use, but we must fix one). Given a sequence $(\Pi_j)_{j\in \mathcal U_0}$ of independent Poisson point processes on $\mathscr{X}$ with intensity measure \eqref{eq:ppp_intensity}, define a simple point process $\Pi$ as follows. Let
\begin{equation} \label{jdefn}
j(t,x)=\max \l(\l\{k\in \mathcal U(t): \exists i \in \{1,2\}\text{ with }|c^i_{t-t_k}(p_k,\Pi_k^{t_k})-x|\leq \mathcal R_n\r\}\cup \{0\}\r).
\end{equation}
Note that $j(t,x)=0$ corresponds to regions of space-time that are not near a caterpillar, so that for $(t,x,r,z_1,z_2,q,v)\in \Pi_0$, $\mc B_r(x)$ does not contain a caterpillar. Then we define
\begin{equation} \label{Pidefn}
\Pi = \bigcup\limits_{k\in \mc{U}_0}\l\{(t,x,r,z_1,z_2,q,v)\in \Pi_k\-j(t,x)=k\r\}.
\end{equation}

\begin{lemma} \label{lemma_build_pp}
$\Pi$ is a Poisson point process with intensity measure given by \eqref{eq:ppp_intensity}.
\end{lemma}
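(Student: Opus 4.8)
The plan is to exhibit $\Pi$ as a \emph{predictable thinning} of the independent family $(\Pi_k)_{k\in\mathcal U_0}$ and to read off its compensator. Throughout, work with the filtration $(\mathcal F_t)_{t\ge 0}$ generated by the restrictions of all the $\Pi_k$, $k\in\mathcal U_0$, to times $\le t$. The first, and most delicate, step is to check that the index field $j(\cdot,\cdot)$ of \eqref{jdefn} defines (a version of) a predictable process. By construction each caterpillar position $c^i_s(p_k,\Pi_k^{t_k})$, its lifetime, and hence the birth/death times $t_k,h_k$ and the active set $\mathcal U(t)$, are built from the driving processes restricted to earlier times; after replacing each $c^i_s$ by its left limit $c^i_{s-}$ and declaring caterpillar $k$ to be active at $t$ only on the half-open interval $[t_k,h_k)$, one obtains, for each fixed $(t,x)$ and each $k$, that $\mathbf 1\{j(t,x)=k\}$ is $\mathcal F_{t-}$-measurable and left-continuous in $t$, hence defines a predictable process. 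These modifications change $j$ only at the (countably many, Lebesgue-null in time) instants at which some $\Pi_k$ has a point, so almost surely they do not change $\Pi$. Finally note that $j(t,x)$ takes exactly one value in the countable set $\mathcal U_0$, so that $\sum_{k\in\mathcal U_0}\mathbf 1\{j(t,x)=k\}\equiv 1$.

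Now fix a finite time horizon (the statement is local in time, and on any $[0,T]$ only finitely many caterpillars are active almost surely, e.g.\ by Lemma~\ref{branching_in_loglog}). Write $\Lambda$ for the measure \eqref{eq:ppp_intensity}. Each $\Pi_k$ is a Poisson point process with intensity $\Lambda$, and since the $\Pi_k$, $k\in\mathcal U_0$, are independent, enlarging the filtration by the remaining processes does not change the compensator of $\Pi_k$: with respect to $(\mathcal F_t)$, $\Pi_k$ still has compensator $\Lambda$. By the standard formula for the compensator of a point process thinned by a predictable $\{0,1\}$-valued integrand, the restriction $\Pi_k\cap\{(t,x,r,z_1,z_2,q,v):j(t,x)=k\}$ has $(\mathcal F_t)$-compensator $\mathbf 1\{j(t,x)=k\}\,\Lambda$. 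The process $\Pi$ of \eqref{Pidefn} is the superposition over $k\in\mathcal U_0$ of these (almost surely pairwise disjoint) thinned processes, and compensators add under superposition, so the $(\mathcal F_t)$-compensator of $\Pi$ equals $\sum_{k\in\mathcal U_0}\mathbf 1\{j(t,x)=k\}\,\Lambda=\Lambda$, by the partition-of-unity property noted above.

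Thus $\Pi$ is a simple point process, adapted to $(\mathcal F_t)$, whose compensator is the deterministic measure $\Lambda$, which is diffuse in the time coordinate. By Watanabe's characterisation of Poisson processes (a simple point process with deterministic, time-diffuse compensator is Poisson with that compensator as its intensity), $\Pi$ is a Poisson point process with intensity \eqref{eq:ppp_intensity}.

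The only genuine obstacle is the predictability claim in the first step: one must fix the filtration and pass to left limits, both for the caterpillar paths and for the intervals of activity, and verify joint measurability, before the thinning--compensator formula may legitimately be invoked; it is precisely the self-referential dependence of $j(t,x)$ on the $\Pi_k$ that makes this care necessary. (One could instead reveal the $\Pi_k$ one caterpillar at a time and use the strong Markov property of Poisson processes, but this is less transparent, since $j(t,x)$ near caterpillar $k$ depends, through the maximum in \eqref{jdefn}, on the neighbouring caterpillars as well.) The remaining ingredients --- independence giving the $(\mathcal F_t)$-compensator of each $\Pi_k$, the thinning formula, additivity of compensators under superposition, and Watanabe's theorem --- are standard.
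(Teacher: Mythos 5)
Your route is genuinely different from the paper's. The paper never mentions compensators or predictability: it verifies the void-probability criterion $\P[N(B)=0]=e^{-\nu(B)}$ for finite unions of time slabs (citing Kingman), by conditioning slab by slab on the past, freezing the index map $j(\cdot,\cdot)$ at the left endpoint $a_k$ of each slab, and showing that $\{N(B_k)=0\}$ and $\bigcap_j\{N_j(B^j_k)=0\}$ can differ only on the event that an enlarged space-time region contains at least two points, which has probability $\mc O(\delta^2)$; summing over $\Theta(1/\delta)$ slabs and letting $\delta\to0$ gives the claim. Your thinning/superposition argument with Watanabe's characterisation is a legitimate alternative and arguably cleaner: independence gives each $\Pi_k$ compensator \eqref{eq:ppp_intensity} in the enlarged filtration, predictable thinning by $\1\{j=k\}$, additivity over the a.s.\ disjoint superposition, and $\sum_{k}\1\{j(t,x)=k\}\equiv 1$ yield a deterministic, time-diffuse compensator, hence a Poisson process. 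What the paper's method buys is that it sidesteps all predictability questions; what yours buys is no time-discretisation or error bookkeeping. (A small aside: Lemma~\ref{branching_in_loglog} is only a high-probability bound; a.s.\ local finiteness follows instead from each lifetime being at least $(\log n)^{-c}$, as the paper uses via $\mc V(b_k)$, though in fact your argument does not really need it.)

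The one step you flag as delicate is, however, the one you justify incorrectly. You pass to left limits and to activity intervals $[t_k,h_k)$, and then argue that since these modifications affect $j$ only on the countable, Lebesgue-null set of times carrying points of some $\Pi_k$, they almost surely do not change $\Pi$. That inference is backwards: the atoms of $\Pi$ sit exactly at those times, so altering the selection rule there is precisely what could alter which atoms are kept; nullity in time controls the compensator, not the selection of atoms. The claim must instead be rescued geometrically: a.s.\ no two of the $\Pi_k$ have simultaneous points, and if a point $(t,x,r,\dots)\in\Pi_k$ affects caterpillar $k$, the affected lineage lies in $\mc B_r(x)\subseteq\mc B_{\mc R_n}(x)$ both immediately before and immediately after its jump, so its membership in the candidate set of \eqref{jdefn} is unchanged by taking left limits. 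The choice of activity interval also genuinely matters, and your convention is the wrong one at both ends: $\1\{t_k\le t\}$ is optional but not predictable ($t_k$ is a totally inaccessible stopping time, and the children's presence at $t_{(k,i)}=h_k$ depends on the event at that very instant), so you should use $(t_k,h_k]$ with left-limit positions. This is not cosmetic: a divergence time $h_k$ is by construction a point of $\Pi_k$, the children's processes a.s.\ have no point there, and under your convention (children active at $h_k$, parent not) that atom would be assigned to a child and dropped from $\Pi$, so the conclusion would actually fail; with $(t_k,h_k]$ and left limits the dying caterpillar claims its divergence event and the compensator computation is both valid and consistent with \eqref{Pidefn}. (The paper's definition, with closed intervals and an arbitrary ordering, carries the same boundary ambiguity and its proof glosses over it; but since your argument invokes predictability explicitly, you must fix the convention the right way.) With these repairs your proof goes through.
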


\begin{remark}
We defined the coupling \eqref{Pidefn} for each $n\in\N$. As such, in the proof of Lemma \ref{lemma_build_pp} we regard $n$ as a constant and we will not include it inside $\mc{O}(\cdot)$, etc.
\end{remark}

\begin{proof}
Let $\nu (dt,dx,dr,dz_1, dz_2,dq,dv)$ be the intensity measure given in \eqref{eq:ppp_intensity}.

Let $\mathcal B_0$ be the set of bounded Borel subsets of  $\R_+ \times \R^2 \times \R_{+} \times \mathcal B_1(0)^2\times [0,1]^2$; for $B\in \mathcal B_0$, let $N(B)=|\Pi\cap B|$ and for $j\in \mathcal U_0$, let $N_j(B)=|\Pi_j\cap B|$. 
Suppose $B=\cup_{i=1}^k B_i\in \mathcal B_0$ where for each $i$, $B_i=[a_i,b_i]\times D_i$ for some $a=a_1<b_1\leq a_2<\ldots <b_k=b$. Let $\mathcal B_R\subset \mathcal B_0$ denote the collection of such sets $B$. 
Note that $\Pi$ is a simple point process, and that therefore $\Pi$ is a Poisson point process with intensity $\nu$ if and only if
\begin{equation}\label{eqn:ppp_criterion}
\P\l[N(B)=0\r]=e^{-\nu(B)}
\end{equation}
for all $B\in\mc B_R$. (See e.g. Section 3.4 of \cite{Kingman1992}.)

For some $\delta>0$, assume that $b_i-a_i\leq \delta $, $\forall i$ (by partitioning the $B_i$ further if necessary). Since $B$ is bounded, $\exists$ $ d<\infty$ s.t. $|x|\leq d$ for all $(t,x,r,z_1,z_2,q,v)\in B$. We can write
\begin{align} \label{void_prob}
\P[N(B)=0]&=\P[\cap_{i=1}^k \{N(B_i)=0\}]\nonumber \\
&=\E \l[\prod _{i=1}^{k-1}\1_{\{N(B_i)=0\}}\P\bigg(N(B_k)=0\bigg|(\Pi _j(a_k))_{j\in \mathcal U_0}\bigg) \r]
\end{align}
where $\Pi _j(t):=\Pi_j |_{[0,t] \times \R^2 \times \R_{+} \times \mathcal B_1(0)^2\times [0,1]^2}$. 

For $j\in \mathcal U_0$, let $D^j_k=\{(x,r,z_1,z_2,q,v)\in D_k:j(a_k,x)=j\}$ and $B^j_k=[a_k,b_k]\times D^j_k$. Also let 
$$\tilde B _k =[a_k,b_k] \times \mc B _{d+3\mc R_n}(0) \times \R_{+} \times \mathcal B_1(0)^2 \times [0,1]^2, $$ 
and let $\mathcal V (t)=\cup_{s\leq t}\mathcal U(s)$.

For $t\in [a_k,b_k]$, if none of the caterpillars in $\mc B_{d+3\mc R_n}(0)$ move during the time interval $[a_k,t]$ then $j(a_k,x)=j(t,x)$ $\forall x\in \mc B_d(0)$; thus a point $(t,x,r,z_1,z_2,q,v)$ in $\Pi \cap B_k$ must be a point in $\Pi_j \cap B^j_k$ for some $j$, and vice versa. We can use this observation to relate $\{N(B_k)=0\}$ and $\cap_{j\in \mathcal U_0}\{N_j(B^j_k)=0\}$, as follows.

If $N(B_k)=0$ and $N_j(B_k^j)\neq 0$ for some $j\in \mathcal U_0$, then $D^j_k\neq \emptyset$ so  $j\in \mc V(a_k)\cup \{0\}$ (either $j=0$ or the caterpillar indexed by $j$ is alive at time $a_k$). Also after $a_k$ and before the point in $\Pi_j \cap B^j_k$, one of the caterpillars in $\mc B_{d+3\mc R_n}(0)$ must have moved, so there must be a point in $\Pi_l \cap \tilde B_k$ for some $l \in \mathcal V(b_k)$. 
Conversely, if $N_j(B^j_k)=0$ $\forall j\in \mathcal U_0$ and $N(B_k)\neq 0$, then there must be a point in $\Pi_l \cap \tilde B_k$ followed by either a point in $\Pi_0 \cap B_k$ or a point in $\Pi_{l'} \cap B_k$ for some $l,l' \in \mathcal V(b_k)$. Hence 
\begin{align} \label{symmetric_diff}
&\{N(B_k)=0\}\triangle (\cap_{j\in \mathcal U_0}\{N_j(B^j_k)=0\})\subset \l\{N_0(B_k)+
\sum_{l\in \mathcal V (b_k)}N_l (\tilde{B_k})\geq 2\r\}.
\end{align} 
Note that by the definition of a caterpillar in Definition \ref{caterpillar_defn}, for each $j\in \mc U$, $h(p_j,\Pi^{t_j}_j)\geq (\log n)^{-c}$. It follows that $\mc V (b_k)\subseteq \bigcup _{m=0}^{\lceil b_k (\log n)^c \rceil }\{1,2\}^m$. Also
if $J\subset \mathcal U_0$ with $|J|=K$ then $\sum_{j\in J}N_j (\tilde B_k)$ has a Poisson distribution with parameter $K\nu (\tilde{B_k})$, 
and since $b_k-a_k \leq \delta$, $\nu(\tilde B_k)\leq n^2 \pi(d+3\mathcal R_n)^2 \mu((0,\mathcal R])\delta$.
Hence for $Z'$ a Poisson random variable with parameter $(2^{2+b_k (\log n)^c}+1)\nu(\tilde B_k)=\mc O (\delta)$,
$$
\P\l[N_0(B_k)+\sum_{j\in \mathcal V (b_k)}N_j (\tilde{B_k})\geq 2\bigg|(\Pi _j(a_k))_{j\in \mathcal U_0}\r]\leq \P\l[Z'\geq 2\r]=\mc{O}(\delta^2).
$$
By \eqref{symmetric_diff}, we now have that
\begin{align*}
\P[N(B_k)=0|(\Pi _j(a_k))_{j\in \mathcal U_0}]&=\P[\cap_{j\in \mathcal U_0}\{N_j(B^j_k)=0\}]+\mc{O}(\delta^2)\\
&=\prod _{j\in \mathcal U_0} \exp (-\nu (B^j_k))+\mc{O}(\delta^2)\\
&=\exp(-\nu (B_k))+\mc{O}(\delta^2).
\end{align*}
Substituting this into \eqref{void_prob} and then repeating the same argument for $k-1,k-2,\ldots,1$,
\begin{align*}
\P[N(B)=0]&=\prod _{i=1}^k \exp(-\nu (B_k))+ \sum_{i=1}^k \mc{O}(\delta^2)\\
&= \exp (-\nu (B))+ k \mc{O}(\delta^2).
\end{align*}
By partitioning $B$ further, we can let $\delta t \rightarrow 0$ with $k=\Theta(1/\delta)$. 
It follows that $\P[N(B)=0]=\exp (-\nu(B))$.
By \eqref{eqn:ppp_criterion}, this completes the proof.
\end{proof}

It follows immediately from Lemma \ref{lemma_build_pp} that the collection of potential ancestral lineages in $(\mathcal P _t (p,\Pi))_{t\geq 0}$ has the same distribution as $\mc P^{(n)}(p)$, the rescaled {\slfvs} dual.
We now show that under this coupling the rescaled {\slfvs} dual and branching caterpillar coincide with high probability. 

We consider $(\mathcal C _t (p,(\Pi_j)_{j\in \mathcal U}))_{0\leq t\leq T}$ as a collection of paths as follows. 
The set of paths through a single caterpillar $(c _t (p,\Pi))_{t\leq h(p,\Pi)}$ with $k^*(p,\Pi)=k^*$ is given by $\{l^i\}_{i\in \{1,2\}^{k^*}}$, where $l^i(t)=c ^{1}_t (p,\Pi)$ for $t\in [0,(\log n)^{-c}]$ and for each $1\leq k \leq k^*$, $l^i(t)=c ^{i_k}_t (p,\Pi)$ for $t\in [\tau^\text{br}_{k-1}+(\log n)^{-c},(\tau^\text{br}_{k}+(\log n)^{-c})\wedge h(p,\Pi)]$.
Then the collection of paths through $(\mathcal C _t (p,(\Pi_j)_{j\in \mathcal U}))_{0\leq t\leq T}$ is given by concatenating paths through the individual caterpillars, i.e.~paths $l:[0,T]\to \R^2$ such that for some sequence $(u_m)_{m\geq 0}\subset \mc U$ with $u_{m+1}=(u_m,i_m)$ for some $i_m\in \{1,2\}$ for each $m$, for $t\in [t_{u_m},h_{u_m}]$, $l(t)$ follows a path through $(c_{t-t_{u_m}}(p_{u_m},\Pi^{t_{u_m}}_{u_m}))_t$ with $l(h_{u_m})=p_{u_{m+1}}$.

\begin{lemma} \label{coupling with bc}
Fix $T>0$. Let $(\Pi_j)_{j\in \mathcal U_0}$ be independent Poisson point processes with intensity measure \eqref{eq:ppp_intensity} and let $\Pi$ be defined from $(\Pi_j)_{j\in \mathcal U_0}$ as in \eqref{Pidefn}. Then $(\mathcal C _t (p,(\Pi_j)_{j\in \mathcal U}))_{0\leq t\leq T}$ and $(\mathcal P _t (p,\Pi))_{0\leq t\leq T}$, viewed as collections of paths, are equal with probability at least $1-\mc{O}((\log n)^{-1/4})$.
\end{lemma}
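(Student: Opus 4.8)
The plan is to use the explicit coupling already set up in \eqref{Pidefn}: by Lemma \ref{lemma_build_pp} the resulting $\Pi$ is a Poisson point process with intensity \eqref{eq:ppp_intensity}, so $(\mc P_t(p,\Pi))_{t\ge 0}$ is a version of $\mc P^{(n)}(p)$, and it remains only to show that with probability $1-\mc O((\log n)^{-1/4})$ the branching caterpillar $(\mc C_t(p,(\Pi_j)_{j\in\mc U}))_{0\le t\le T}$ and $(\mc P_t(p,\Pi))_{0\le t\le T}$ describe the same collection of paths on $[0,T]$.

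First I would fix a small $a>0$ and work on the event $\mc G_n$ on which: (i) $\mc U(T)\subseteq \mc U_{\lfloor a\log\log n\rfloor}$ and every caterpillar has $k^*\le (\log n)^{9/8}$, so that at most $\mc O((\log n)^{a\log 2+9/8})$ caterpillars and branching windows arise; (ii) no caterpillar overshoots, so that each branching splits into two children at separation $\ge\gamma_n$; (iii) in every branching window $[\tau^{\text{br}}_{k-1},\tau^{\text{br}}_{k-1}+(\log n)^{-c}]$ the only selective event affecting $c^1$ or $c^2$ is the one at its left endpoint; and (iv) no lineage of any caterpillar ever comes within $4\mc R_n$ of a lineage of a caterpillar that is not one of its ancestors or descendants. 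The failure probabilities are controlled by Lemma \ref{branching_in_loglog} and \eqref{k_star_exp_bound} for (i), Lemma \ref{caterpillar_overshoot} for (ii), the fact that selective events hit a given lineage at rate $\v{s_n}\lambda n=\lambda\log n$ together with (i) for (iii), and Lemma \ref{Pinteract} together with a union bound over the $\mc O((\log n)^{2a\log 2})$ pairs of caterpillars for (iv); summing these gives $\P[\mc G_n^c]=\mc O((\log n)^{-1/4})$ provided $a$ is small and $c=4$.

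On $\mc G_n$ the two objects agree. By \eqref{jdefn}--\eqref{Pidefn} the process $\Pi$ coincides with $\Pi_j$ in a space--time neighbourhood of caterpillar $j$, so the lineages of $\mc P_t(p,\Pi)$ lying near caterpillar $j$ evolve exactly as in the dual $\mc P_t(p_j,\Pi_j^{t_j})$ used to define that caterpillar; part (iv) ensures that no event of $\Pi$ affects lineages of two distinct caterpillars and that the $\max$ in \eqref{jdefn} never diverts an event away from the caterpillar it actually hits. Comparing path collections: a path through the branching caterpillar runs through one caterpillar after another down a ray of the tree, and within a single caterpillar is specified by choosing at each branching $\tau^{\text{br}}_1,\dots,\tau^{\text{br}}_{k^*}$ whether to stay on $c^1$ or to switch to $c^2$; by (iii) these are precisely the selective events of $\Pi$ that affect that caterpillar's lineages, by (ii) the final (divergent) branching is where $\mc P_t(p,\Pi)$ splits into the two independently driven sub-caterpillars, and the earlier (coalescing) branchings generate exactly the ``branch off and merge back'' potential ancestral lineages present in both descriptions. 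Hence the collections of paths coincide on $[0,T]$.

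The main obstacle is establishing (iv). The subtlety is that inside a caterpillar the auxiliary lineage $c^2$ may, during a branching window, sit as far as $\gamma_n$ from $c^1$, so a direct appeal to Lemma \ref{Pinteract} --- which only keeps the spine lineages more than $4\mc R_n$ apart --- is not by itself enough; one has to combine it with the observation that windows occupy only a fraction $\mc O((\log n)^{1-c})$ of a caterpillar's lifetime and with a stopping-time iteration over successive divergences: at each divergence the two new spines start at separation $\ge\gamma_n$ and, because the $\Pi_j$ are independent, their difference has the law of the difference $\hat\eta^n$ of two independent walkers, to which the Skorohod-embedding estimates behind Lemma \ref{Pinteract} apply. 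The remaining ingredients --- shallowness of the caterpillar tree, absence of overshoots, and absence of extra selective events in windows --- are routine union bounds over the polylogarithmically many caterpillars and windows, using \eqref{k_star_exp_bound} and Lemmas \ref{caterpillar_overshoot} and \ref{branching_in_loglog}.
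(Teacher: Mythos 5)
Your overall architecture is the same as the paper's: use Lemma \ref{lemma_build_pp} so that $(\mc P_t(p,\Pi))_t$ is a version of the rescaled dual, then show the two path collections agree off a bad event consisting of (a) a too-deep or too-long caterpillar tree (Lemma \ref{branching_in_loglog}, \eqref{k_star_exp_bound}), (b) extra selective events inside branching windows (rate $\lambda\log n$ times window length $(\log n)^{-c}$, union over polylogarithmically many windows), and (c) two distinct caterpillars coming within interaction range (union over pairs). Your numerology for (a)--(c) with $a$ small and $c=4$ is consistent with the paper's.

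The gap is in your treatment of (iv), which is the heart of the lemma. You assert that ``a direct appeal to Lemma \ref{Pinteract} is not by itself enough'' because the auxiliary lineage $c^2$ can sit at distance up to $(\log n)^{-c}$ from its spine, and you propose to patch this with the observation that windows occupy a fraction $\mc O((\log n)^{1-c})$ of a lifetime together with a spine-only iteration over divergences. As sketched this does not close the argument: controlling only spine--spine separations above $4\mc R_n$ says nothing about an auxiliary lineage that may be $(\log n)^{-c}\gg\mc R_n$ away from its spine, the time-fraction bound does not quantify the probability that during a window the auxiliary bridges such a gap (that requires a two-dimensional hitting estimate, not a bound on occupation time), and no separation threshold of order $(\log n)^{-c}$ can be propagated anyway, since sibling caterpillars are born at separation only just above $(\log n)^{-c}$. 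The paper resolves this without any patch: for caterpillars $w\neq v$ with most recent common ancestor $w\wedge v$, either $w\wedge v$ ended in an overshoot (cost $\mc O((\log n)^{21/8-c})$ by Lemma \ref{caterpillar_overshoot}), or it diverged, and then for \emph{each} of the four choices $i_1,i_2\in\{1,2\}$ the pair $(c^{i_1}\text{ of }w,\,c^{i_2}\text{ of }v)$, concatenated with the connecting spine paths back to time $h_{w\wedge v}$, is a pair of lineages each with the law of a single potential ancestral lineage, driven by disjoint independent point processes, started at separation at least $(\log n)^{-c}$; Lemma \ref{Pinteract} (whose proof is exactly the independent-walker estimate via $\hat\eta^n$) then bounds the probability that this pair ever comes within $4\mc R_n$ before time $T$ by $\mc O(\log\log n/\log n)$, auxiliaries included. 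So the obstacle you identify is not an obstacle once the lemma is applied to the correct pairs, and your proposed alternative route for (iv) would need substantial additional work to be made into a proof.
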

\begin{proof}
We shall use Lemma \ref{branching_in_loglog} with $a=(16\log 2)^{-1}$.
Writing, for $j\in \mathcal U$, $k^*(j)=k^*(p_j, \Pi_j^{t_j})$, the number of branching events in $c_{t-t_j}(p_j,\Pi_j^{t_j})$ before $h_j$, by a union bound over $\mc U_{\lfloor a\log \log n \rfloor }$ and \eqref{k_star_exp_bound},
\begin{align} 
\P[\exists j \in \mathcal U_{\lfloor a\log \log n \rfloor}: k^*(j)&\geq (\log n)^{9/8}]\leq 2^{2+a\log \log n}\mc O(e^{-\delta (\log n)^{1/8}})\notag\\
&=\mc{O}(e^{-\delta (\log n)^{1/8}/2}).\label{multi_kstar_bound}
\end{align}
Let $(\tau^{\text{br}}_k(j))_{k\geq 1}$ denote the sequence of branching events in $c_{t-t_j}(p_j,\Pi_j^{t_j})$, and similarly define $(\tau^{\text{type}}_k(j))_{k\geq 1}$ and $(\tau^{\text{over}}_k(j))_{k\geq 1}$ as in \eqref{tau_type_cat}. 
Note that $(\mathcal C_t)_{t\leq T}$ and $(\mathcal P_t)_{t\leq T}$ only differ as collections of paths if either a selective event affects a caterpillar during a time interval in which it ignores branching, or if two different caterpillars are simultaneously within $\mc R_n$ of some $x\in \R^2$ and so one of them is not driven by the pieced together Poisson point process $\Pi$.
More formally, if $(\mathcal C_t)_{t\leq T}$ and $(\mathcal P_t)_{t\leq T}$ differ as collections of paths then one or more of the following events occurs.
\begin{enumerate}
\item $\mathcal U (T)\not\subseteq \mathcal U_{\lfloor a\log \log n \rfloor}$ or $k^*(j)\geq (\log n)^{9/8}$ for some $j\in \mathcal U_{\lfloor a\log \log n \rfloor}$.
\item For some $j\in \mathcal U_{\lfloor a\log \log n \rfloor}$ and $k\leq (\log n)^{9/8}$, the event $E_1(j,k)$ occurs: one of the lineages
$c^{1}_{t-t_j}(p_j,\Pi_j^{t_j})$ and $c^{2}_{t-t_j}(p_j,\Pi_j^{t_j})$ is affected by a selective event 
in the time interval $[\tau^{\text{br}}_k(j),\tau^{\text{br}}_k(j)+(\log n)^{-c}]$.
\item For some $w\neq v\in \mathcal U_{\lfloor a\log \log n \rfloor}$, the event $E_2(v,w)$ occurs: there are $i_1,i_2\in \{1,2\}$ with $|c^{i_1}_{t-t_w}(p_w,\Pi_w^{t_w})-c^{i_2}_{t-t_v}(p_v,\Pi_v^{t_v})|\leq 2\mathcal R_n$ for some $t\leq T$.
\end{enumerate} 
Recall from \eqref{kappa_n_lambda} and \eqref{jump of size z} that selective events affect a single lineage with rate $\lambda \log n$. Hence for $k\in \N$ and $j\in \mathcal U$, $\P[E_1(j,k)]=\mc{O}((\log n)^{1-c})$. 

We now consider the event $E_2(v,w)$.
For $w\neq v \in \mc U$, let $i= \min \{j\geq 1:w_j \neq v_j \}$. Then let
$$ w \wedge v = \begin{cases} (w_1,\ldots , w_{i-1}) \quad \text{if }i\geq 2\\
\emptyset \quad \text{if }i=1.
\end{cases} 
$$
At time $h_{w\wedge v}$, either $\tau_{k^*(w\wedge v)}^{\text{type}}(w\wedge v)=\tau_{k^*(w\wedge v)}^{\text{over}}(w\wedge v)$ or $\tau_{k^*(w\wedge v)}^{\text{type}}(w\wedge v)=\tau_{k^*(w\wedge v)}^{\text{div}}(w\wedge v)$, in which case $|p_{(w\wedge v,1)}-p_{(w\wedge v,2)}|\geq (\log n)^{-c}$. 
Conditional on $|p_{(w\wedge v,1)}-p_{(w\wedge v,2)}|\geq (\log n)^{-c}$,
for $i_1$, $i_2\in \{1,2\}$, 
$$\l(c^{i_1}_{t-t_w}(p_w,\Pi_w^{t_w}),c^{i_2}_{t-t_v}(p_v,\Pi_v^{t_v})\r)_{t\in[t_w,h_w]\cap [t_v,h_v]\cap [0,T]}$$
is part of the pair of potential ancestral lineages of an excursion started at time $h_{w\wedge v}$ with initial displacement at least $(\log n)^{-c}$.
Hence by Lemmas~\ref{Pinteract} and \ref{caterpillar_overshoot},
$$\P[E_2(w,v)]=\mc{O}\l(\frac{\log \log n}{\log n}\r)+\mc O \l((\log n)^{\frac{21}{8}-c}\r)=\mc O \l((\log n)^{-3/8}\r) $$
since $c\geq 3$. By a union bound, and using Lemma~\ref{branching_in_loglog} and \eqref{multi_kstar_bound} it follows that
\begin{align*}
&\P\l[(\mathcal C_t)_{t\leq T} \neq \mathcal (P_t)_{t\leq T}\r]\\
&\hspace{2pc}\leq o((\log n)^{-1})+4(\log n)^{a\log 2 +{9/8}}\P[E_1(j,k)]
 +16(\log n)^{2a\log 2}\P[E_2(w,v)]\\
&\hspace{2pc}=\mc{O}\l((\log n)^{a\log 2 +\frac{9}{8}+1-c}\r)+\mc{O}\l((\log n)^{2a\log 2-3/8}\r)\\
&\hspace{2pc}=\mc{O}\l((\log n)^{-1/4}\r),
\end{align*}
by our choice of $a=(16\log 2)^{-1}$ and since $c\geq 3$.
\end{proof}

We are now ready to complete the proof of Theorem \ref{result d>1}.

\begin{proof}
(Of Theorem \ref{result d>1})
Set $c=4$. By Lemmas \ref{lemma_build_pp} and \ref{coupling with bc}, we have a coupling of the rescaled S$\Lambda$FV dual and the branching caterpillar under which the two processes are equal (as collections of paths) with probability at least $1-\mc{O}((\log n)^{-1/4})$.

We now couple $(\mathcal C _t (p,(\Pi_j)_{j\in \mathcal U}))_{0\leq t\leq T}$ to a branching Brownian motion with branching rate $\lambda \kappa_n$. 
Let $((W_t^j)_{t\geq 0},H_j)_{j\in \mathcal U}$ be an i.i.d.~sequence, where $(W^j_t)_{t\geq 0}$ is a Brownian motion starting at $0$ and $H_j\sim \text{Exp}(\lambda \kappa_n )$ independent of $(W^j_t)_{t\geq 0}$. 
For each $j\in \mc U$, we couple $(c_{t-t_j}(p_j,\Pi_j^{t_j}))_{t \in [t_j,h_j]}$ to $((W_t^j)_{t\geq 0},H_j)$ as in Lemmas \ref{lifetime} and \ref{single_caterpillar}.

For $j\in \mc U$, let $A_1(j)$ be the event that both $|(h_j-t_j)-H_j|\leq 3(\log n)^{-1/4}$ and for $i=1,2$ and $t\in [t_j,h_j]$,
$$\l|(c^i_{t-t_j}(p_j,\Pi_j^{t_j})-p_j)-W^j_{\sigma^2 (t-t_j)}\r|\leq 2(\log n)^{\frac{9}{8}-\frac{c}{3}}=2(\log n)^{-5/24}. $$
By Lemmas \ref{lifetime} and \ref{single_caterpillar}, for any $r>0$, for each $j\in \mc U$, $\P[A_1(j)] \geq 1-\mc{O}\l((\log n)^{-r}\r)$.
Hence, taking a union bound over $j\in \mathcal U_{\lfloor \log \log n \rfloor}$,
$$\P[\cap_{j\in \mathcal U_{\lfloor \log \log n \rfloor}} A_1(j)]\geq 1- \mc{O}((\log n)^{\log 2-r}). $$
Also, for $j\in \mc U$, define the event
\begin{align*}
&A_2(j)=\bigg\{\sup_{t\in [0,3(\log n)^{-1/4}]}|W_{\sigma^2 t}^j|\\
&\hspace{8pc}+\sup_{t\in [H_j-3(\log n)^{-1/4},H_j]}|W_{\sigma^2 t}^j-W_{\sigma^2 H_j}^j|\leq (\log n)^{-1/9}\bigg\}. 
\end{align*}
Then by another union bound over $\mathcal U_{\lfloor \log \log n \rfloor}$,
since for a Brownian motion $(W_t)_{t\geq 0}$ started at $0$, $\P \l[ \sup_{t\in [0,3(\log n)^{-1/4}]}|W_t| \geq \tfrac{1}{2}(\log n)^{-1/9}\r]=o((\log n)^{-r})$, we have that 
$$\P[\cap_{j\in \mathcal U_{\lfloor \log \log n \rfloor}} A_2(j)]\geq 1- \mc{O}((\log n)^{\log 2-r}). $$
By Lemma \ref{branching_in_loglog}, $\P[\mathcal U (T)\not\subseteq \mathcal U_{\lfloor \log \log n \rfloor}]=o((\log n)^{-r}) $.

Define a branching Brownian motion starting at $p$ with diffusion constant $\sigma^2$ from $((W_t^j)_{t\geq 0},H_j)_{j\in \mathcal U}$ by letting the increments of the initial particle be given by $(W^{\emptyset}_{\sigma^2 t})_{t\geq 0}$ until time $H_\emptyset$, when it is replaced by two particles which have lifetimes $H_1$ and $H_2$ and increments given by $(W^{1}_{\sigma^2 t})_{t\geq 0}$, $(W^{2}_{\sigma^2 t})_{t\geq 0}$ and so on.

If $\mathcal U (T)\subseteq \mathcal U_{\lfloor \log \log n \rfloor}$ and $A_1(j)\cap A_2(j)$ occurs for each $j\in \mc U_{\lfloor \log \log n \rfloor}$, each path in the branching caterpillar stays within distance $2(\log \log n+1)(\log n)^{-1/9}+2(\log \log n+1)(\log n)^{-5/24}$ of some path through the branching Brownian motion and vice versa.

Setting $r=\log 2 +1/4$ gives us a coupling between the branching caterpillar and branching Brownian motion (with diffusion constant $\sigma^2$ and branching rate $\kappa_n \lambda$) such that with probability at least $1-\mc O((\log n)^{-1/4})$, up to time $T$ each path in the rescaled {\slfvs} dual stays within distance $2(\log \log n)(\log n)^{-1/9}+2(\log \log n)(\log n)^{-5/24}$ of some path through the branching Brownian motion and vice versa. Finally, we need to couple this branching Brownian motion up to time $T$ with a branching Brownian motion with branching rate $\kappa \lambda$.
By \eqref{kappaconv}, $\kappa_n \to \kappa$ as $n\to \infty$, so this follows by straightforward bounds on the difference between the branching times and the increments of a Brownian motion during such a time.
\end{proof}

\bibliographystyle{plainnat}
\bibliography{confirmation}

\end{document}